\theoremstyle{plain} \newtheorem{theorem}{Theorem}[section]
\theoremstyle{plain} \newtheorem{corollary}[theorem]{Corollary}
\theoremstyle{plain} \newtheorem{proposition}[theorem]{Proposition}
\theoremstyle{plain}\newtheorem{lemma}[theorem]{Lemma}
\theoremstyle{definition} \newtheorem{definition}[theorem]{Definition}
\theoremstyle{definition}\newtheorem{example}[theorem]{Example}
\theoremstyle{remark}\newtheorem{remark}[theorem]{Remark}
\theoremstyle{definition}\newtheorem{hypothesis}[theorem]{Hypothesis}
\theoremstyle{remark}
\theoremstyle{definition}
\theoremstyle{definition}\newtheorem{problem}[theorem]{Problem}
\newcommand{\Azero}{\text{\rm Aut}^0(X_0)}
\newcommand{\Aun}{\text{\rm Aut}^1(X_0)}
\newcommand{\T}{\mathcal T(X)}
\newcommand{\D}{\text{\rm Diff}^0(X)}
\newcommand{\DlKz}{\text{\rm Diff}^0_l(X,\mathcal K_0)}
\newcommand{\DK}{\text{\rm Diff}^0(X,\mathop\bigsqcup_{\alpha\in A}\mathcal K_\alpha)}
\newcommand{\DKp}{\text{\rm Diff}^+(X,\mathop\bigsqcup_{\alpha\in A}\mathcal K_\alpha)}
\newcommand{\I}{\mathcal I}
\numberwithin{equation}{section}
\def\uniondisjointe{\mathop\bigsqcup}
\def\union{\mathop\bigcup}
\title[Teichm\"uller and Riemann Stacks]{The Teichm\"uller and Riemann Moduli Stacks}
\author{Laurent Meersseman}
\date{\today}
\thanks{This is part of project Marie Curie 271141 DEFFOL. I enjoyed the warmful atmosphere of the CRM at Bellaterra during the preparation of this work. I would like to thank Ernesto Lupercio for explaining me the basics of groupoids and their interest in moduli theory; Alain Genestier for suggesting the construction of section \ref{Teichmueller} and for stimulating discussions about stacks; Allen Hatcher and Daniel Ruberman for answering some questions about the mapping class group of $4$-manifolds; Fabrizio Catanese for discussions about the rigidified hypothesis of section \ref{rigidified}; Serge Cantat for pointing out a result of Liebermann on automorphisms of k\"ahler manifolds; and Etienne Mann for clarifying some aspects of algebraic stacks. Finally many thanks to the referee whose accurate comments gave rise to a greatly improved version of the paper.}
\subjclass{32G05, 
58H05, 
14D23 
}
\address{Laurent Meersseman\\ LAREMA\\ Universit\'{e} d'Angers\\ F-49045 Angers Cedex, France\\ laurent.meersseman@univ-angers.fr}
\dedicatory{To Alberto Verjovsky on his 70th birthday.}
\begin{document}
\begin{abstract}
The aim of this paper is to study the structure of the Teichm\"uller and Riemann moduli spaces, viewed as stacks over the category of complex analytic spaces, for higher-dimensional manifolds. We show that both stacks are analytic in the sense that they admit a smooth analytic groupoid as atlas. We then show how to construct explicitly such an atlas as a sort of generalized holonomy groupoid for such a structure. This is achieved under the sole condition that the dimension of the automorphism group of each structure is bounded by a fixed integer. All this can be seen as an answer to Question 1.8 of \cite{VerbitskySurvey}.
\end{abstract}

\maketitle

\section{Introduction.}
\label{intro}
Let $X$ be a smooth oriented compact surface. The {\it Teichm\"uller space} $\mathcal T(X)$ is defined as the quotient space of the set of smooth integrable complex operators compatible with the orientation (o.c.)
$$
\mathcal I=\{J\ :\ TX\longrightarrow TX\mid J^2\equiv -Id,\ J\text{ o.c.}\}
$$
by $\text{Diff}^0(X)$, the connected component of the identity in the oriented diffeomorphism group $\text{Diff}^+(X)$ of $X$. 

The theory of Teichm\"uller spaces is a cornerstone in complex variables and Riemann surfaces. Originated by Riemann himself and followed by the fundamental works of Teichm\"uller, Ahlfors and Bers, it has moreover implications in many branches of mathematics as algebraic geometry, hyperbolic geometry, complex dynamics, discrete groups, ... 

Perhaps the most basic property of $\mathcal T(X)$ is that
it has a natural structure of a complex manifold, making it a global moduli space of complex structures on $X$.

Moreover, the mapping class group of $X$ acts on $\mathcal T(X)$ and the resulting quotient is a complex orbifold. This refined quotient coincides with the quotient of $\mathcal I(X)$ by the full group $\text{Diff}^+(X)$, the so-called Riemann moduli space $\mathcal M(X)$.
\vspace{5pt}\\
\indent Let now $X$ be a smooth oriented compact manifold of even dimension $2n$ strictly greater than $2$.  The Teichm\"uller and Riemann moduli spaces can still be defined, but one now has to add the integrability condition in the definition
\begin{equation}
\label{I}
\mathcal I=\{J\ :\ TX\longrightarrow TX\mid J^2\equiv -Id,\ J\text{ o.c.},\ \ [T^{1,0}, T^{1,0}]\subset T^{1,0}\}
\end{equation}
for
$$
T^{1,0}=\{v-iJv\mid v\in TX\}.
$$
Although the literature about these higher dimensional Teichm\"uller and Riemann moduli spaces is much less developed than that about surfaces, it has grown significantly in the last years and these spaces play an increasing role in Complex Geometry. Catanese's guide to deformations and moduli \cite{Catsurvey} as well as \cite{Catsurvey2} gives some general local properties of $\mathcal T(X)$ and contains many results on the Teichm\"uller space of minimal surfaces of general type. And in the special case of hyperk\"ahler manifolds, the Teichm\"uller space is used by Verbitsky in a prominent way in his proof of a global Torelli Theorem \cite{Verbitsky} and also to showing some important results on these manifolds \cite{VerbitskyErgodic}.  

However, the main difference with the case of surfaces is that $\mathcal T(X)$  and $\mathcal M(X)$ are {\it just topological spaces} and do not have any good geometric structure. Only for special classes such that the class of hyperk\"ahler manifolds, an analytic structure is known on $\mathcal T(X)$, but even in this case, it is not Hausdorff at all points. Perhaps the most dramatic example is given by $X$ being $\mathbb S^2\times\mathbb S^2$. Then $\mathcal M(X)$, {\it as a set}, can be identified with $\mathbb N$, a point $a\in\mathbb N$ corresponding to the Hirzebruch surface $\mathbb F_{2a}$ (and each connected component of $\mathcal T(X)$ can be identified with $\mathbb Z$, $a$ and $-a$ encoding the same surface, see Examples \ref{Hirzebruch} and \ref{Hirzebruchbis}). But, {\it as a topological space}, it is endowed with a non-Hausdorff topology. No two points are separated, as a consequence of the fact that $\mathbb F_{2a}$ can be deformed onto any $\mathbb F_{2b}$ with $b<a$ by an arbitrary small deformation. Equivalently, this comes from the fact that the dimension of the automorphism group of Hirzebruch surfaces jumps. 

Indeed, in presence of this jumping phenomenon, $\mathcal T(X)$  and $\mathcal M(X)$ are not even locally Hausdorff hence not locally isomorphic to an analytic space (cf. Example \ref{Hopfbis}). This explains why the classical approach developed in the fundamental works of Kodaira-Spencer and Kuranishi is based on the following principles.
\begin{enumerate}
	\item [(i)] in higher dimension the global point of view must be abandoned for the local point of view;
	\item[(ii)] and the Teichm\"uller space replaced with the Kuranishi space which must be thought of as the best possible approximation in the analytic category for a local moduli space of complex structures.
\end{enumerate} 

\indent Nevertheless, putting on $\T$ and $\mathcal M(X)$ a global analytic structure in some sense is the only way to go beyond the classical local deformation theory. As we cannot expect a structure of analytic space, even a non-Hausdorff one, {\it we have to view these quotient spaces as stacks}. The aim of this paper is to develop this point of view. The question now becomes to showing that, {\it as stacks, $\T$ and $\mathcal M(X)$ are analytic}. This can also be seen as an answer to Question 1.8 of \cite{VerbitskySurvey}. Since we work with arbitrary complex structures and not only with projective ones, we have to work with analytic stacks and not algebraic ones.\vspace{5pt}\\
\indent For surfaces of fixed genus $g>1$, the classical setting coincide with the stack setting. Both stacks $\T$ and $\mathcal M(X)$ are analytic and can be fully recovered from the Teichm\"uller and Riemann moduli spaces. In particular, the complex structure on $\T$, respectively the complex orbifold structure on $\mathcal M(X)$ are equivalent to the analytic structures on the corresponding stack. The case of genus $1$ is somewhat more complicated, because of the translations\footnote{To avoid this problem, it is customary to use marked complex tori, that is elliptic curves.}. Here the stack structure contains strictly more information than the classical spaces $\mathbb H$ and $\mathbb H/\text{PSL}_2(\mathbb Z)$ since it also encodes the translation group of each complex torus, but once again both stacks are analytic and their analytic structure comes from the complex structure on the corresponding spaces. 
 \vspace{5pt}\\
\indent  The main results of this paper show that, in any dimension, both the Teichm\"uller and the Riemann moduli stacks are analytic stacks.  The only condition needed for this result to hold is that the dimension of the automorphism group of all structures of $\mathcal T(X)$ (or $\mathcal M(X)$) is bounded by a fixed integer\footnote{which is always the case for surfaces.}. This is nevertheless a mild restriction since we may easily stratify $\mathcal I$ into strata where this dimension is bounded. We emphasize that $X$ can be any compact manifold and that we consider all complex structures and not only projective or k\"ahler ones\footnote{However, our results also apply to the set of k\"ahler structures on $X$ modulo $\text{Diff}^0(X)$ or $\text{Diff}^+(X)$.}.
 \vspace{5pt}\\
\indent  We postpone the precise statements of the main Theorems \ref{maintheorem} and \ref{mainR} as well as the strategy of proof to section \ref{notations} after defining precisely the involved notions. Let us just say that we will follow the same strategy that can be used for Riemann surfaces. Firstly, we define $\mathcal T(X)$ and $\mathcal M(X)$ as stacks of families of complex manifolds diffeomorphic to $X$. This is the easy part. Secondly, we build an atlas with good analytic properties to show they are analytic. This is the difficult part which takes the rest of the paper.
\vspace{5pt}\\
\indent We hope that this paper will serve as a source of motivation for studying global moduli problems in Complex Analytic Geometry and their interplay with analytic stacks. From the one hand, every abstract result on these stacks might apply to moduli problems and increase our knowledge of Complex Manifolds. From the other hand, examples of Teichm\"uller stacks are an unending source of examples of analytic stacks, showing all the complexity and richness of their structure, far from finite dimensional group actions and leaf spaces.

\section{Definitions and statements of the main results}
\label{notations}

\subsection{The Teichm\"uller and Riemann spaces}
\label{TRstacks}
Let $X$ be a smooth (i.e. $C^\infty$) oriented compact connected manifold of even dimension. Let $\mathcal E$, respectively $\mathcal I$, be the space of smooth almost complex, respectively complex operators on $X$ which are compatible with the orientation. The definition of $\mathcal I$ is given in \eqref{I}. We assume that both sets are non-empty.
\vspace{5pt}\\
We topologize $\mathcal E$ as a Fr\'echet manifold locally modelled onto the smooth sections of a vector bundle over $X$ (cf. \cite{Ku3} for the encoding of structures, \cite{Hamilton} and \cite{Verbitsky} for the Fr\'echet topology). 
We denote by $\mathcal E_0$, respectively $\mathcal I_0$, a connected component of $\mathcal E$, respectively $\mathcal I$. Points of $\mathcal E$ will be denoted generically by $J$.
\vspace{5pt}\\
For $T$ a topological space, we denote by $\pi_0(T)$ the set of connected components of $T$.
The previous topology being countable, $\pi_0(\mathcal E)$ is a countable set. 
\vspace{5pt}\\ 
The diffeomorphism group $\text{Diff}^+(X)$ acts on the right on $\mathcal E$ by pullback of almost complex operators. It is a Fr\'echet Lie group \cite{Hamilton} acting analytically\footnote{\label{anaf}There is some subtle point here because the complex structure of $\text{Diff}^+(X)$ depends on the choice of a complex structure on $X$. We will just use the fact that, if we endow locally at identity $\text{Diff}^+(X)$ with chart \eqref{chart}, then the map $(f, J')\mapsto J'\cdot f$ is analytic in a neighborhood of $(Id, J)$.} onto $\mathcal E$. This action leaves $\mathcal I$ invariant. It is given by
\begin{equation}
\label{actionJ}
(J\cdot f)_x (v)=(d_xf)^{-1}\circ J_{f(x)}\circ (d_xf) (v)
\end{equation}
We focus on $\text{Diff}^0(X)$, the connected component of the identity in $\text{Diff}^+(X)$. We define the mapping class group
\begin{equation}
\label{mcgroup}
\mathcal M\mathcal C(X):=\text{Diff}^+(X)\big /\text{Diff}^0(X)
\end{equation}
and we set
\begin{equation}
\label{Teich}
\mathcal T(X):=\mathcal I\big /\text{Diff}^0(X)
\end{equation}
and
\begin{equation}
\label{Riem}
\mathcal M(X):=\mathcal I\big /\text{Diff}^+(X)=\mathcal T(X)\big /\mathcal M\mathcal C(X)
\end{equation}

Both $\mathcal T(X)$ and $\mathcal M(X)$ are endowed with the quotient topology, making them topological spaces.

\begin{remark}
\label{oriented}
In the first version of this paper, we take for $X$ an unoriented smooth compact manifold and consider $\mathcal I$ as the set of all integrable complex operators, regardless of orientation. Then $\mathcal T(X)$ is defined as in \eqref{Teich}, and in \eqref{Riem}, we have to replace the oriented diffeomorphism group by the full diffeomorphism group $\text{Diff}(X)$. This does not change substantially these two sets, and our results apply to this setting. In fact, the main drawback of forgetting the orientation is that the notion of Teichm\"uller space does not coincide to the classical one for surfaces. Especially, the unoriented Teichm\"uller space of a compact surface has two connected components, corresponding to the two possible orientations.
\vspace{5pt}\\
More generally, if $X$ admits a diffeomorphism reversing orientation, then the unoriented Teichm\"uller space has twice more connected components as the classical one. However, the two Riemann spaces coincide. Finally, if $X$ does not admit any orientation reversing diffeomorphism, then the unoriented Teichm\"uller and Riemann spaces are the disjoint union of the classical ones for both orientations. Notice that, in this last case, changing the orientation may completely change the Teichm\"uller and Riemann spaces. It is even possible that they become empty (think of $\mathbb P^2$ and $\overline{\mathbb P^2})$.
\end{remark}

\subsection{Stacks and groupoids.}
\label{stacksgroupoids}
Before getting into the definition of the Teichm\"uller and Riemann moduli stacks, let us define precisely the notions of stacks and groupoids we will use.\vspace{5pt}\\
First, a warning. We insist on the fact that we work exclusively in the $\mathbb C$-analytic context, since we deal with arbitrary compact complex 
manifolds. This forces us to adapt and sometimes to transform the definitions of stacks coming from algebraic geometry. Also, since the literature on stacks over the category of analytic spaces is very scarce, we shall keep the required facts from stack theory to a minimum and give complete proofs even of some routine facts (for example in Proposition \ref{Theyarestacks}).
\vspace{5pt}\\
Moreover, our construction of atlas is inspired in the construction of the \'etale holonomy groupoid of a foliation. So for the groupoid point of view, we stick to the literature in foliation theory and Lie groupoids, especially \cite{Moerdijk}. The conventions are somewhat different from those of algebraic geometry and we have to adapt ourselves to these differences. Especially, we will not make use of the notion of representability (see however Remark \ref{representability}).
\vspace{5pt}\\
Let $\mathfrak S$ denote the category of $\mathbb C$-analytic spaces. We include analytic spaces that are everywhere non-reduced in $\mathfrak S$. We consider it as a site for the euclidean topology: our families of coverings are just standard topological open coverings. We emphasize that we will not use other coverings as \'etale or analytic ones. At some points (for example in Section \ref{examples}), we may restrict the base category to be that of complex manifolds, still with the euclidean coverings.
\vspace{5pt}\\
In this paper, a {\it stack} $\mathcal S$ is a stack in groupoids over the site $\mathfrak{S}$ in the sense of \cite[Def. 8.5.1]{Stacksproject}.
In brief, $\mathcal S$ is a stack if 
\begin{enumerate}
	\item [(i)] $\mathcal S$ comes equipped with a morphism $\mathcal S\to\mathfrak S$ whose fibers are group\-oids.
	\item[(ii)] $\mathcal S\to\mathfrak S$ is a category fibred in group\-oids, i.e. pull-backs exist and are unique up to unique isomorphisms.
	\item[(iii)] Isomorphisms form a sheaf, i.e. one can glue a compatible collection of isomorphisms defined over an open covering of an analytic space $S$ into a single isomorphism over $S\to S$.
	\item[(iv)] Every descent data is effective, i.e. one can glue objects defined on an open covering of an analytic space $S$ into a single object over $S$ by means of a cocycle of morphisms. 
\end{enumerate}

The groupoids we consider are {\it analytic}, that is
\begin{enumerate}
	\item [(i)] the set of objects and the set of morphisms are complex analytic spaces 
	\item[(ii)] and all the structure maps are analytic morphisms.
\end{enumerate}   
\begin{remark}
	\label{nonHausdorff}
	In the setting of Lie groupoids and foliation theory, the space of morphisms is possibly non-Hausdorff, since such phenomena occur when constructing holonomy groupoids of finite-dimensional $C^\infty$ foliations. For example, the holonomy groupoid of the Reeb foliation of the sphere $\mathbb S^3$ is non-Hausdorff. In classical foliation theory, this is linked to the existence of so-called vanishing cycles. Recall also that the Hausdorffness/Non Hausdorffness of the set of morphisms is preserved by Morita equivalence. We refer to \cite[\S 5.2]{Moerdijk} for more details. 
	
	Even if our construction is inspired in that of holonomy groupoids of foliations, all the groupoids we construct will be proved to be Hausdorff. We note that in previous versions of this work, we authorized non-Hausdorff groupoids since at that time we did not succeed in proving our groupoids are Hausdorff.
\end{remark}
Analytic groupoids are in particular topological so that it makes sense to localize them on an open covering of the set of objects \cite{Ha}. The geometric quotient associated to such a groupoid is the topological space obtained by taking the quotient of the set of objects by the equivalence relation defined by the set of morphisms. Connected components of the groupoid refer to connected components of the geometric quotient.\vspace{5pt}\\
Such a groupoid is {\it \'etale}, respectively {\it smooth}, if both source and target maps are \'etale, respectively smooth, morphisms. Here smoothness refers to smoothness of morphisms in analytic/algebraic geometry, not to differentiability. We emphasize that a smooth analytic groupoid is not a complex Lie group\-oid, since we allow singularities of both the set of objects and the set of morphisms, but it is the exact singular counterpart of a complex Lie group\-oid, cf. \cite[\S 5]{Moerdijk}. 
\vspace{5pt}\\
Given a stack, we are above all interested in knowing if it admits an analytic groupoid as atlas (or presentation) in the following sense: the stackification of this groupoid by torsors as explained in \cite{BCEFFGK}, Chapters 3 and 4, is isomorphic to the initial stack. In that case, the stack can be fully recovered from the atlas through this long process of stackification. We will detail this process in the proofs of Theorems \ref{maintheorem} and \ref{mainR}. Among analytic presentations, the following ones are of special interest.

\begin{definition}
	\label{Artinstacks}
	We call a stack {\it \'etale analytic} (respectively {\it Artin analytic} or simply {\it analytic}) if it admits a presentation by an \'etale (respectively smooth) analytic groupoid; {\it Deligne-Mumford analytic} if it is \'etale with finite stabilizers. 
\end{definition}

We take as definition of Morita equivalence that given in \cite[\S 5.4]{Moerdijk}, with the obvious adaptations to the groupoids we use (e.g. replace $C^\infty$ map with $\mathbb C$-analytic map, submersion with smooth morphism, ...). It follows from carefully adapting \cite{BCEFFGK} to the analytic context that two  smooth atlases of the same stack are Morita equivalent.

\begin{remark}
	\label{representability}
	Standard definitions of {\it algebraic stacks} (see for example \cite[Def. 84.12.1]{Stacksproject}) does not involve directly the existence of an atlas but asks for representability of the diagonal and existence of a surjective, smooth morphism from a scheme or an algebraic space onto the stack. It is of course possible to adapt these definitions to the analytic context, but as mentioned in the warning we do not follow this way. However, both notions are not far each from the other. In the algebraic definitions, both conditions are used to ensure the existence of an algebraic atlas. From the surjective and smooth morphism, one constructs a symmetry groupoid which is an atlas for the stack. The set of objects of this groupoid is the scheme or algebraic space on which the smooth morphism is defined. Then, the condition on the diagonal ensures that the smooth morphism is itself representable and so that the set of morphisms of this symmetry groupoid has also a structure of scheme or algebraic space, depending on the precise definition that was taken. So in short, an algebraic stack admits a presentation by an algebraic groupoid. Besides, an analytic stack as defined above surely admits a surjective smooth morphism $f$ from an analytic space $S$ onto it: just take for $S$ the set of objects of the atlas and for $f$ the induced morphism. But we do not know if the diagonal is always representable.
\end{remark}
\subsection{The Teichm\"uller and Riemann moduli stacks as stacks of deformations.}
\label{stacks}
 Let $V$ be an open set of $\mathcal I$. Define the following category $\mathcal M(X,V)$ over $\mathfrak S$. 
\vspace{5pt}\\
{\it Objects} are $(X, V)$-families
\begin{equation}
\label{XVfamily}
\pi\ :\ \mathcal X\longrightarrow B
\end{equation}
that is:
\begin{enumerate}
	\item[(i)] $B\in \mathfrak S$ and $\mathcal X\in\mathfrak S$.
	
	\item[(ii)] $\pi$ is a smooth and proper morphism with reduced fibers all diffeomorphic to $X$.
	
	\item[(iii)] Each fiber $X_b:=\pi^{-1}(b)$ can be encoded as $(X, J)$ with $J\in V$.
\end{enumerate}
In other words, a $(X, V)$-family is nothing else than an analytic deformation of complex structures of $X$ such that the structure of each fiber is isomorphic to a point of $V\subset \mathcal I$. Of course, if $V^{sat}$ denotes the image of $V$ through the action of $\text{Diff}^+(X)$, then $\mathcal M(X,V)$ and $\mathcal M(X,V^{sat})$ are equal. However, it is interesting to have this flexibility, for example we will often take for $V$ a connected component of $\mathcal I$, even if it is not saturated.
\vspace{5pt}\\
{\it Morphisms} are cartesian diagrams 
\begin{equation}
\label{XVmorphisms}
\begin{CD} 
\mathcal X@ >F>>\mathcal X'\cr
@ V\pi VV @ VV \pi' V\cr
B@ >f>>B'
\end{CD}
\end{equation}
between $(X,V)$-families. Observe that the pull-back of a $(X,V)$-family is a $(X,V)$-family.\vspace{5pt}\\
We now pass to the construction of $\mathcal T(X, V)$, which is more delicate. Observe that any family 
$\pi :\mathcal X\to B$ can be seen locally over some sufficiently small open set $B_\alpha\subset B$ as 
\begin{equation}
\label{trivialization}
\mathcal X_{\vert B_\alpha}\simeq (X\times B_\alpha, \mathcal J_\alpha)
\end{equation}
for some smooth family $\mathcal J_\alpha$ of complex operators of $X$. Over an intersection $B_\alpha\cap B_\beta$, two trivializations \eqref{trivialization} are glued using a family $(\phi_t)_{t\in B_\alpha\cap B_\beta}$ of diffeomorphisms of $X$ whose differential commute with $\mathcal J_\alpha$ and $\mathcal J_\beta$. As a consequence, $\mathcal  X$ is diffeomorphically a bundle over $B$ with fiber $X$ and structural group $\text{Diff}^+(X)$. In particular, once such an identification with a bundle is fixed, it makes sense to speak of the structural group of $\mathcal X$, and to make a reduction of the structural group to some subgroup $H$ of $\text{Diff}^+(X)$. And it makes also sense to speak of $H$-isomorphism of the family $\mathcal X$, that is isomorphism of $\mathcal X$ such that, in each fiber, the induced diffeomorphism of $X$ is in $H$.
\vspace{5pt}\\
We define $\mathcal T(X,V)$ as the category whose objects are $(X,V)$-families with a marking
\begin{equation}
\label{XVfamilymarked}
\begin{CD}
\mathcal X @>\simeq >> E\\
@V\pi VV @VVV\\
B @= B
\end{CD}
\end{equation}
with $E\to B$ a bundle with fiber $X$ and structural group reduced to $\text{Diff}^0(X)$ and whose morphisms are cartesian diagrams \eqref{XVmorphisms} such that the canonical isomorphism between $\mathcal X$ and $f^*\mathcal X'$ induces a $\text{Diff}^0(X)$-isomorphism of the markings.
\vspace{5pt}\\
Alternatively, one may use $\D$-framings, that is $C^\infty$-isotopy classes of maps 
\begin{equation}
\begin{CD}
X &@>\simeq >i> &\pi^{-1}(b)\ &\hookrightarrow &\mathcal X\\
&&&&@V\pi VV @VV\pi V\\
&&&&b&\hookrightarrow &B
\end{CD}
\end{equation}
Here $b$ is any point of $B$ and isotopies are $C^\infty$-maps $I$ from $X\times [0,1]$ to $\mathcal X$ such that $\pi\circ I(X\times\{t\})$ is a point for all $t\in [0,1]$. In particular, we may replace $b$ by any other point using an isotopy. Set $I_t:=I(-,t)$. Since $\mathcal X\to B$ is diffeomorphic to a $\D$-bundle, then, given an isotopy $I$ with $\pi\circ I(X\times\{0\})=\pi\circ I(X\times\{1\})$, the diffeomorphism $I_1^{-1}\circ I_0$ of $X$ belongs to $\D$. In other words, the framing induces in that case a coherent identification of the fibers with $X$ up to an element of $\D$.   \vspace{5pt}\\
This forms a category over $\mathfrak S$ and a subcategory of $\mathcal M(X,V)$. In general, it contains stricly less objects, since some $(X,V)$-families do not admit $C^\infty$-markings. 
\begin{proposition}
	\label{Theyarestacks}
	Let $V$ be an open subset of $\mathcal I$. Then both $\mathcal M(X,V)$ and $\mathcal T(X,V)$ are stacks.
\end{proposition}

\begin{proof}
It is straightforward but we sketch it for sake of completeness. First, the natural morphism $\mathcal M(X,V)\to\mathfrak S$ is obviously a category fibred in groupoids. The fiber over $S\in\mathfrak S$ is the groupoid formed by $(X,V)$-families over $S$ as objects and isomorphisms of families as morphisms. Then given two $(X,V)$-families $\pi : \mathcal X\to S$ and $\pi' : \mathcal X'\to S$ and an open covering $(S_\alpha)$ of $S$, any collection of isomorphisms $f_\alpha$ from the restriction of $\mathcal X$ to $\pi^{-1}(S_\alpha)$ onto the restriction of $\mathcal X'$ to $(\pi')^{-1}(S_\alpha)$ such that $f_\alpha$ and $f_\beta$ are equal on the intersections $\pi^{-1}(S_\alpha\cap S_\beta)$ obviously glue to give an isomorphism of families between $\mathcal X$ and $\mathcal X'$. So isomorphisms form a sheaf. Finally, starting from a collection $\pi_\alpha :\mathcal X_\alpha\to S_\alpha$ of $(X,V)$-families and from a cocycle $f_{\alpha\beta}$ of isomorphisms of families between $\pi^{-1}_\alpha (S_\alpha\cap S_\beta)$ and $\pi^{-1}_\beta (S_\alpha\cap S_\beta)$, then
$$
\mathcal X:=\uniondisjointe \mathcal X_\alpha/\equiv
$$
where $\equiv$ is the equivalence relation given by the cocycle $(f_{\alpha\beta})$, is a $(X,V)$-family over $S$. Every descent data is effective. The proof for $\mathcal T(X,V)$ is similar.
\end{proof}
We may thus define
\begin{definition}
	\label{RMS}
	We call {\it Riemann moduli stack} the stack $\mathcal M(X,\mathcal I)$. The stack $\mathcal M(X, V)$ is the Riemann moduli stack for complex structures belonging to $V$. 
\end{definition}

By abuse of notation, we denote simply by $\mathcal M(X)$ the Riemann moduli stack. No confusion should arise with \eqref{Riem}. In the same way, 

\begin{definition}
	\label{TMS}
	We call {\it Teichm\"uller stack} the stack $\mathcal T(X,\mathcal I)$. The stack $\mathcal T(X, V)$ is the Teichm\"uller stack for complex structures belonging to $V$. 
\end{definition}

By abuse of notation, we denote simply by $\mathcal T(X)$ the Teichm\"uller stack. No confusion should arise with \eqref{Teich}.

\subsection{Statement of the main results}
\label{statements}
Let $J\in \mathcal I$ and set 
\begin{equation}
\label{XJ}
X_J:=(X,J)
\end{equation}
\begin{remark}
	\label{notation}
	To avoid cumbersome notations, we write $X_0$ for $X_{J_0}$, and $X_\alpha$ for $X_{J_\alpha}$, ...
\end{remark}
Let $\Theta_J$ be the sheaf of germs of holomorphic vector fields on $X_J$. For $i\geq 0$, we consider the function
\begin{equation}
\label{h(J)}
J\in\mathcal I\longmapsto h^i(J):=\dim H^i(X_J,\Theta_J).
\end{equation}
Set
\begin{equation}
\label{I(k)}
             \mathcal I(k)=\{J\in \mathcal I\mid h^0(J)\leq k\}\quad\text{ and }\quad\I_0(k)=\I_0\cap \I(k)
\end{equation}
The $\mathcal I(k)$ and $\I_0(k)$ are open sets of $\mathcal{I}$, see \eqref{Ic}.
\begin{theorem}
	\label{maintheorem}
	Let $V$ be an open set of $\mathcal I$ (for example, $V$ is a connected component of $\mathcal I$). For all $k$, define $\mathcal I(k)$ as in {\rm \eqref{I(k)}}. Then,
	\begin{enumerate}
		\item [(i)] For all $k$, the stack $\mathcal T(X,V\cap \mathcal I(k))$ is Artin analytic.
		\item[(ii)] Assume that the function $h^0$ is bounded on $V$, resp. on $\I$. Then, the stack $\mathcal T(X,V)$, resp. $\T$, is Artin analytic.
	\end{enumerate}
\end{theorem}
and
\begin{theorem}
	\label{mainR}
	Let $V$ be an open set of $\mathcal I$ (for example, $V$ is a connected component of $\mathcal I$). For all $k$, define $\mathcal I(k)$ as in {\rm \eqref{I(k)}}. Then,
	\begin{enumerate}
		\item [(i)] For all $k$, the stack $\mathcal M(X,V\cap \mathcal I(k))$ is Artin analytic.
		\item[(ii)] Assume that the function $h^0$ is bounded on $V$, resp. on $\I$. Then, the stack $\mathcal M(X,V)$, resp. $\mathcal M(X)$, is Artin analytic.
	\end{enumerate}
\end{theorem}
Both Theorems will be proved in Section \ref{main} as easy consequences of the more precise Theorems \ref{maintheorembis} and \ref{mainRbis}.

\subsection{Strategy of proof and organization of the paper.}
We want to construct smooth analytic atlases of $\mathcal T(X,V)$ and $\mathcal M(X,V)$. Since we deal with arbitrary complex structures, one has to use as a starting point the classical deformation theory of Kodaira-Spencer and to build such an atlas from the local data encoded in the Kuranishi space. But, in the higher dimensional case, the jumping phenomenon causes many troubles. When it occurs, a positive-dimensional subspace of the Kuranishi space encode a single complex structure and we have to encode in our atlas that all these points are the same complex structure. And this is not the only problem. One should expect that knowing, at a complex structure $J$, the Kuranishi space of $(X,J)$ and the identifications induced by its automorphisms, respectively by its automorphisms which are $C^\infty$-isotopic to the identity is enough to get a local model of the Riemann moduli stack, respectively of the Teichm\"uller stack.
\vspace{5pt}\\
\indent This is however not correct. A third element is missing. Some orbits of $\text{Diff}^0(X)$ may a priori have a complicated geometry and accumulate onto $J$. This induces additional identifications to be done in the Kuranishi space, and thus to encode in the atlas of the Riemann or Teichm\"uller stack, even in the absence of automorphisms.
\vspace{5pt}\\
\indent The main problem behind this atlas construction is to understand how to glue the bunch of Kuranishi spaces, in other words how to keep track of all identifications to be done not only on a single Kuranishi space but also between different ones.\vspace{5pt}\\
\indent This is achieved here by describing the space of complex structures $\mathcal I$ as a foliated space in a generalized sense. Then, we describe the stacky structure of the leaf space. A natural source of stacks is given by (leaf spaces of) foliations. Such stacks admit atlases given by an \'etale groupoid, the \'etale holonomy groupoid \cite[\S 5.2]{Moerdijk}. In general, the action of $\text{Diff}^0(X)$ onto $\mathcal I$ does not define a foliation, nor a lamination. But we show that it defines a more complicated foliated structure. We then turn to the construction of an associated holonomy groupoid. It is however much more involved than the classical construction and it constitutes the bulk of the paper. Indeed, the transverse structure of this generalized foliation being stacks, the holonomy morphisms are stacks morphisms and do not fit easily into a nice groupoid. A lot of work is needed for that. 
\vspace{5pt}\\
\indent The paper is organized as follows. Recall that we defined the Teichm\"uller and the Riemann moduli stacks and stated precisely the main results in section \ref{notations}. We collect some facts about the Kuranishi space in Section \ref{Kuranishisection} and we explain how to turn it into a Kuranishi stack that encodes also the identifications induced by the automorphisms. We then give some general properties of $\mathcal I$ in section \ref{f-homotopy}, putting emphasis on connectedness properties, and introducing a graph, called the {\it graph of $f$-homotopy}. 
The foliated structure of $\mathcal I$ is introduced in section \ref{TGfoliationsection}. The technical core of the paper is constituted by sections \ref{multifoliate} and \ref{Teichmueller}, where we perform the construction of the analogue for the holonomy groupoid. We call it {\it the Teichm\"uller groupoid}. To smoothe the difficulties of the construction, a sketch of it is given in section \ref{holonomy} and a very simple case is treated in section \ref{rigidified}. All this culminates in the proof of Theorem \ref{maintheorembis}, stating that the Teichm\"uller groupoid is an analytic smooth presentation of the Teichm\"uller stack. Analogous construction and statement for the Riemann moduli stack are done in sections \ref{Riemann} and \ref{main}. Complete examples are given in section \ref{examples}.

\section{The Kuranishi stack.}
\label{Kuranishisection}

\subsection{The Kuranishi space and Theorem.}
\label{Kuranishi}
Fix a riemannian metric on $X$ and let $\exp$ denote the exponential associated to this metric. 

 For any $J\in\I$, a complex chart for $\text{Diff}^0(X)$ at $Id$ is given by the map
\begin{equation}
\label{chart}
e\ :\ \xi\in W\subset A^0\longmapsto \exp (\xi+\bar \xi)\in \text{Diff}^0 (X)
\end{equation}
where $A^0$ is the $\mathbb C$-vector space of $(1,0)$-vector fields of $X_J$ and $W$ a neighborhood of $0$.
\vspace{5pt}\\
We denote by $\text{Aut}(X_J)$ the group of automorphisms of $X_J$. The connected component of the identity $\text{Aut}^0(X_J)$ in $\text{Aut}(X_J)$ is tangent to $H^0(X_J,\Theta_J)$. We define
\begin{equation}
\label{Aut1}
\text{Aut}^1(X_J):=\text{Aut}(X_J)\cap\text{Diff}^0(X).
\end{equation}
\begin{remark}
\label{Autattention}
Be careful that \eqref{Aut1} {\it is not} equal to $\text{Aut}^0(X_J)$, cf. section \ref{rigidified} and \cite{Aut1paper}.
\end{remark}

Let $J_0\in \mathcal I$. Kuranishi's Theorem \cite{Ku1}, \cite{Kur1}, \cite{Ku3} gives a finite dimensional local model for $\mathcal I$ and the action of $\text{Diff}^0(X)$, namely

\begin{theorem}{\bf (Kuranishi, 1962)}.
\label{KuranishiTheorem}
For any choice of a closed complex vector space $L_0$ such that
\begin{equation}
\label{A0}
A^0=L_0\oplus H^0(X_0,\Theta_{0})
\end{equation}
there exists a connected open neighborhood $U_0$ of $J_0$ in $\mathcal I$, a finite-dimensional analytic subspace $K_0$ of $U$ containing $J_0$ and an analytic isomorphism (onto its image)
\begin{equation}
\label{Phi0}
\Phi_0\ :\ U_0\longrightarrow K_0\times L_0
\end{equation}
such that 
\begin{enumerate}
\item[(i)] The inverse map is given by
\begin{equation}
\label{Kuranishiaction}
(J,v)\in \Phi_0(U_0)\longmapsto J\cdot e(v).
\end{equation}
\item[(ii)] The composition of the maps
\begin{equation}
\label{id}
\begin{CD}
K_0\hookrightarrow U_0@> \Phi_0 >> K_0\times L_0 @>\text{1st projection} >>K_0
\end{CD}
\end{equation}
is the identity.
\end{enumerate}
\end{theorem}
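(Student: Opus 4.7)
The plan is to combine Kuranishi's classical construction of a finite-dimensional local slice for $\mathcal{I}$ with a slice-type argument for the $\text{Diff}^0(X)$-action: $K_0$ parameterizes directions transverse to the orbit, while $L_0$ parameterizes directions along the orbit (the quotient $A^0/H^0(X_0,\Theta_0)$ identified with $L_0$ through \eqref{A0}).

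First I would parameterize almost complex operators near $J_0$ by Beltrami-type forms $\phi \in A^{0,1}(X_0, \Theta_0)$ via $T^{0,1}_{J'} = \{v + \phi(v) \mid v \in T^{0,1}_{J_0}\}$. Integrability of $J'$ becomes the Maurer--Cartan equation $\bar\partial \phi + \tfrac{1}{2}[\phi,\phi] = 0$. The Hodge decomposition with respect to the $\bar\partial$-Laplacian provides a Green operator $G$ and the Kuranishi map $F(\phi) = \phi + \tfrac{1}{2} \bar\partial^{*} G[\phi,\phi]$, which is an analytic diffeomorphism near $0$. The classical Kuranishi construction then cuts out a finite-dimensional analytic subspace $K_0 \subset \mathcal{I}$ whose Zariski tangent space at $J_0$ embeds into $H^1(X_0,\Theta_0)$.

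Next I would define the candidate inverse
\begin{equation*}
\Psi : K_0 \times L_0 \longrightarrow \mathcal{I}, \qquad \Psi(J,v) = J \cdot e(v),
\end{equation*}
and compute $d\Psi_{(J_0,0)}$. Along the $K_0$-factor the differential is the inclusion into the ambient Fr\'echet space; along the $L_0$-factor it is the infinitesimal action $v \mapsto \mathcal{L}_{v+\bar v} J_0$, whose image is the $\bar\partial$-exact part of $A^{0,1}(X_0,\Theta_0)$. Since \eqref{A0} makes $L_0$ complementary to the kernel $H^0(X_0,\Theta_0)$ of the infinitesimal action, $d\Psi_{(J_0,0)}$ is injective; its image is $H^1(X_0,\Theta_0) \oplus \mathrm{Im}\,\bar\partial$, which matches the formal tangent space to $\mathcal{I}$ at $J_0$ (the $\bar\partial$-closed Beltrami forms). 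Inverting $\Psi$ will yield $\Phi_0$, and properties (i) and (ii) are then immediate: (i) is the defining formula for $\Psi$, and (ii) follows because $\Psi$ restricted to $K_0 \times \{0\}$ is the inclusion.

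The main obstacle is the analytic inversion of $\Psi$: $\mathcal{I}$ is not a priori a smooth manifold and we are in a Fr\'echet setting. The standard route I would follow is to pass to Sobolev completions $H^s$ for $s$ large enough, apply the Banach inverse function theorem on $(K_0 \times L_0)^{H^s}$, and then invoke elliptic regularity for the $\bar\partial$-Laplacian, together with regularity for the ordinary differential equation defining $e(v)$, to upgrade an $H^s$-preimage of a smooth integrable operator to a pair $(J, v)$ with $J \in K_0$ smooth and $v$ a smooth $(1,0)$-vector field. A subtler point along the way is to check that the image of $K_0$ under the Beltrami parameterization consists of genuinely integrable structures and not merely of formal solutions of the Maurer--Cartan equation; this is exactly what the obstruction equation defining $K_0$ inside $F^{-1}(H^1(X_0,\Theta_0))$ encodes. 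Once these analytic points are settled, the statement follows from the infinitesimal isomorphism established above.
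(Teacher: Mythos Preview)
Your proposal is correct and follows essentially the same route the paper sketches in Remark~\ref{Sobolev}: pass to Sobolev completions $L^2_l$ so that the inverse function theorem applies in a Banach setting, then use elliptic regularity (the fact that $K_0$ is tangent to the kernel of a strongly elliptic operator) to descend back to $C^\infty$ operators. The paper does not give its own proof of this theorem but cites Kuranishi's original papers \cite{Ku1}, \cite{Kur1}, \cite{Ku3} and Douady \cite{Douady}; your outline is a faithful expansion of that classical argument, including the Beltrami parameterization, the Maurer--Cartan equation, and the Kuranishi map $F(\phi)=\phi+\tfrac12\bar\partial^*G[\phi,\phi]$.
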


\begin{remark}
Indeed, Kuranishi always uses the $L^2$-orthogonal complement of the space $H^0(X_0,\Theta_{0})$ as $L_0$. However, it is easy to see that everything works with any other closed complement, cf. \cite{circle}.
\end{remark}

\begin{remark}
\label{Sobolev}
Theorem \ref{KuranishiTheorem} is proved using the inverse function Theorem. To do that, one extends $\mathcal E$ to operators of Sobolev class $L^2_l$ (with $l$ big), so that $\mathcal E$ becomes a Hilbert manifold. Then one may use the classical inverse function Theorem for Banach spaces to obtain the isomorphism \eqref{Phi0}. Finally, because $K_0$ is tangent to the kernel of a strongly elliptic differential operator, then it only consists of $C^\infty$ operators and the isomorphism \eqref{Phi0} is still valid when restricting to $C^\infty$ operators, see \cite{Douady}, \cite{Kur1} and \cite{Ku3} for more details.
\end{remark}

Following \cite{circle}, we call such a pair $(U_0,L_0)$ a {\it Kuranishi domain} based at $J_0$. We make the following assumption
\begin{hypothesis}
	\label{squarehyp}
	The image of $\Phi_0$ is contained in a product $K_0\times W_0$ with $W_0\subset W\cap L_0$ an open and connected neighborhood of $0$ in $L_0$.
\end{hypothesis}
Moreover, we call $\Xi_0$ the natural retraction map
\begin{equation}
\label{Xi0}
\begin{CD}
\Xi_0\ :\ U_0@>\Phi_0 >>K_0\times W_0 @> \text{1st projection} >> K_0
\end{CD}
\end{equation}
and $\Upsilon_0$ the other projection
\begin{equation}
\label{Up0}
\begin{CD}
\Upsilon_0\ :\ U_0@>\Phi_0 >>K_0\times W_0 @> \text{2nd projection} >> W_0
\end{CD}
\end{equation}
Given $J\in\mathcal I$, we denote by $K_J$ the Kuranishi space of $X_J$. We use the same convention for $K$ as that stated for $X$ in Remark \ref{notation}. 

\begin{remark}
 \label{Kuniqueness}
 It is a classical fact that the {\it germ} of $K_J$ at $J$ is unique up to isomorphism. However, in this paper, we consider $K_J$ as an analytic subspace of $\mathcal I$, not as a germ. By abuse of terminology, we nevertheless speak of {\it the} Kuranishi space.
\end{remark}
\subsection{Automorphisms and the Kuranishi stack}
\label{localaction}
The complex Lie group $\text{Aut}^1(X_0)$ (respectively $\text{Aut}(X_0)$) is the isotropy group for the action of $\text{Diff}^0(X)$ at $J_0$ (respectively $\text{Diff}^+(X)$). We focus on the connected component of the identity $\text{Aut}^0(X_0)$ in this isotropy group. It acts on $\mathcal I$, and so locally on $U_0$. This action induces a local action of each $1$-parameter subgroup on $K_0$. In other words, let now $f$ be an element of $\Azero$. There exists some maximal open set $U_f\subset K_0$ such that 
\begin{equation}
\label{Aut0action}
\text{Hol}_f\ :\ J\in U_f\subset K_0\longmapsto Jf:=\Xi_0(J\cdot f)\in K_0
\end{equation}
is a well defined analytic map. Observe that $\text{Hol}_f$ fixes $J_0$. We want to encode all these maps \eqref{Aut0action} in an analytic groupoid
\begin{equation}
 \label{tgdef}
 \mathcal A_0\rightrightarrows K_0.
\end{equation}
\begin{remark}
 \label{notanaction}
  Although it is the case in many examples, the groupoid \eqref{tgdef} will not in general describe a local $G$-action. This comes from the fact that there is no reason for $J(g\circ h)$ to equal $(Jg)h$. In particular, there is no reason for the isotropy groups of the groupoid to be subgroups of $ \text{Aut}^0(X_0)$. They are just submanifolds. Hence we will need some work to define it precisely.
\end{remark}
We start with the following Lemma. We recall that $W_0$ is the neighborhood of $0$ in $L_0$ appearing in Hypothesis \ref{squarehyp}.
\begin{lemma}
	\label{Gdecompolemma} We have
	\begin{enumerate}
		\item [(i)] If $W_0$ is small enough, then there exist an open and connected neighborhood $T_0$ of the identity in $\text{\rm Aut}^0(X_0)$ and an open and connected neighborhood $D_0$ of the identity in $\text{\rm Diff}^0(X)$ such that  
		\begin{equation}
		\label{Gdecompo}
		(\xi,g)\in W_0\times T_0\longmapsto g\circ e(\xi)\in D_0
		\end{equation}
		is an isomorphism. 
		\item[(ii)] Set $\mathcal D_0=\bigcup_{g\in \text{\rm Aut}^0(X_0)}gD_0$. Then \eqref{Gdecompo} extends as an isomorphism
		\begin{equation}
		\label{Gdecompoglobal}
		(\xi,g)\in W_0\times \text{\rm Aut}^0(X_0)\longmapsto g\circ e(\xi)\in \mathcal D_0
		\end{equation}
	\end{enumerate}
	
\end{lemma}

\begin{proof}
	Pass to vector fields and diffeomorphisms of Sobolev class $L^2_l$ for some big $l$ and extend the map. Since $T$ consists of holomorphic elements, this map is of class $C^\infty$ and a simple computation shows that its differential at $(0, Id)$ is an isomorphism. Hence we may apply the local inverse Theorem and get the result for this Sobolev class. To finish with point (i), it is enough to remark that, since $g$ is holomorphic, $g\circ e(\xi)$ is of class $C^\infty$ if and only $\xi$ is.
	
	This also proves that \eqref{Gdecompoglobal} is a local isomorphism at each point. Indeed, for $g_0\in \Azero$, the map $(\xi, g)\in W_0\times g_0T_0\mapsto g_0g\circ e(\xi)\in g_0D_0$ is an isomorphism by point (i). Since it is clearly surjective, we just have to check injectivity. Assume that
	$$
	g\circ e(\xi)=g'\circ e(\xi')\qquad\text{ with }\xi\in W_0,\ \xi'\in W_0, g\in  \text{\rm Aut}^0(X_0), g'\in \text{\rm Aut}^0(X_0).
	$$
	Making this diffeomorphism act on $J_0$, we obtain
	\begin{equation}
	\label{Gdecompuni}
	J_0\cdot e(\xi)=J_0\cdot (g\circ e(\xi))=J_0\cdot (g\circ e(\xi'))=J_0\cdot e(\xi')
	\end{equation}
	hence applying $\Upsilon_0$ to \eqref{Gdecompuni} yields $\xi=\xi'$ and thus $g=g'$.
\end{proof}
\begin{remark}
	\label{noncommutatif}
	Note the order in \eqref{Gdecompo}. If we consider the map $(\xi,g)\mapsto e(\xi)\circ g$, the above proof does not apply. Indeed, this last map is not $C^1$ for vector fields and diffeomorphisms of Sobolev class $L^2_l$, cf. \cite[Example I.4.4.5]{Hamilton}. 
\end{remark}
We say that $(J,F)$ is {\it $(U_0,\mathcal D_0)$-admissible} if $J$ belongs to $K_0$ and $F$ is a finite composition of diffeomorphisms $F_1,\hdots, F_k$ of $\mathcal D_0$ such that $J_{i+1}:=J_i\cdot F_i$ belongs to $U_0$ for $i$ between $1$ and $k$ and with the convention $J_1:=J$. 

In particular, we have $J\cdot F\in U_0$, so replacing $F$ with $F\circ e(\Upsilon_0(J\cdot F))$ if necessary, we obtain a new $(U_0,\mathcal D_0)$-admissible couple such that $J\cdot F$ belongs to $K_0$. In the same way, replacing $F_1$ with $F_1\circ e(\Upsilon_0(J\cdot F_1))$, then $F_2$ with $(e(\Upsilon_0(J\cdot F_1)))^{-1}\circ F_2\circ e(\Upsilon_0(J\cdot F_2))$ and so on, we may assume that every $J_i$ belongs to $K_0$. In the sequel, we always assume that an $(U_0,\mathcal D_0)$-admissible couple has this property.

Now define 
\begin{equation}
\label{Azero}
\mathcal A_0=\{(J,F)\in \text{Diff}^0 (X,\mathcal K_0)\mid (J,F)\text{ is }(U_0,\mathcal D_0)\text{-admissible}\}
\end{equation}
where $\text{Diff}^0 (X,\mathcal K_0)$ denotes the set of $C^\infty$ diffeomorphisms from $X$ to a fiber of the Kuranishi family $\mathcal K_0\to K_0$. Here by $(J,F)\in \text{Diff}^0 (X,\mathcal K_0)$, we mean that we consider $F$ as a diffeomorphism from $X$ to the complex manifold $X_J$. We also consider the two maps from $\mathcal U_0$ to $K_0$
\begin{equation}
\label{st}
\alpha (J,F)=J\qquad\text{ and }\qquad \beta (J,F)=J\cdot F
\end{equation}
\begin{remark}
	\label{important}
	There is a subtle point here. In order to define \eqref{tgdef} as a smooth analytic groupoid, we will realize $\mathcal A_0$ as an analytic subspace of $\text{Diff}^0 (X,\mathcal K_0)$. We emphasize that the complex structure on $\text{Diff}^0(X,\mathcal K_0)\simeq \D\times K_0$ is not a product structure. Indeed, the $L^2_l$-completion of $\D$ can be endowed with a complex structure as an open set of the complex Banach manifold of $L^2_l$-maps from $X$ to $X_0$, but this complex structure depends on $X_0$, that is depends on the choice of a complex structure on $X$. If we cover it with \eqref{chart} as complex chart at identity and with complex chart $F\circ e$ at $F$, then the changes of charts depend on $A^0$ and thus on $J_0$. We set $L^2_l(X,X_0)$, resp. $\text{Diff}^0_l (X,X_0)$ for this Banach manifold and more generally $L^2_l(X,X_J)$, resp. $\text{Diff}^0_l (X,X_J)$. Now the completion $L^2_l(X,\mathcal K_0)$ (and thus the completion $\text{Diff}^0_l (X,\mathcal K_0)$ of $\text{Diff}^0 (X,\mathcal K_0)$ as open subset of $L^2_l(X,\mathcal K_0)$) can be endowed with a structure of a complex Banach analytic space such that the natural projection onto $K_0$ is smooth with fiber over $J$ equal to $L^2_l(X,X_J)$, see \cite{DouadyBourbaki}. As a consequence, we will show in the proof of Lemma \ref{analyticKurstack} that $\mathcal A_0$ is locally modelled onto $K_0\times\Azero$ but is not realized in general as an open submanifold of it (cf. Remark \ref{notanaction}).
	For example, if $X_0$ is an elliptic curve $\mathbb E_\tau$ and $K_0$ is a neighborhood of $\tau$ in the upper half-plane $\mathbb H$, then $\mathcal A_0$ is {\it diffeomorphic} to $\Azero\times K_0$ that is to $\mathbb E_\tau\times K_0$ but, as a complex manifold, $\mathcal A_0$ is in fact the universal family over $K_0$, that is the family whose fiber over $\tau'\in\mathbb H$ is $\mathbb E_{\tau'}$ (cf. \cite{Namba}). 
\end{remark}
From remark \ref{important} and the proof of Lemma \ref{Gdecompolemma}, we have indeed
\begin{lemma}
	\label{Gdecompolemmaanalytic}
	Let $W_0^l$ be a connected neighborhood of $0$ in the $L^2_l$-completion of $L_0$ such that $W_0=W_0^l\cap L_0$. Then, if $W_0^l$ is small enough, the map {\rm \eqref{Gdecompo}}, resp. {\rm \eqref{Gdecompoglobal}}, extends as an analytic isomorphism from $W_0^l\times T_0$, resp. $W_0^l\times \Azero$, to $\text{\rm Diff}^0_l (X,X_0)$. 
\end{lemma}
Then we set
\begin{equation}
\label{multgroupoid}
m((J,F),(J\cdot F, F'))=(J,F\circ F'),\qquad i(J,F)=(J\cdot F, F^{-1})
\end{equation}
and $n(J)=(J,Id)$.
We have
\begin{proposition}
	\label{analyticKurstack}
	The groupoid $\mathcal A_0\rightrightarrows K_0$ endowed with structure maps described in {\rm \eqref{st}} and {\rm \eqref{multgroupoid}} is a smooth analytic groupoid.
\end{proposition}
\begin{proof}
	The space $\mathcal A_0$ is an analytic subspace of $\DlKz$ as an open subset of the set of $(J,F)$ in $\DlKz$ such that $J\cdot F$ satisfies the analytic equations defining $K_0$ as an analytic subspace of (the completion of) $\I$ and thus of (the completion of) $\mathcal E$. Also, $\alpha$ is just the restriction to this analytic subspace of the projection of $L^2_l(X,\mathcal K_0)$ onto $K_0$, hence is analytic. And $\beta$ is given by the action $(J,F)\mapsto J\cdot F$ hence is also analytic.
	
	Let now $(J,F)$ belong to $\mathcal A_0$. Let $A$ be a neighborhood of $(J,F)$ in $\mathcal A_0$ such that $F^{-1}\circ F'$ belongs to $D_0$ for all points $(J',F')$ of $A$. Consider the following composition of analytic maps
	\begin{equation}
	\label{KuranishiStackChart}
	\begin{split}
	(J',F')\in A\longmapsto (J',\chi)\in &K_0\times W^l\\\longmapsto &(J',f',\xi')\in K_0\times \Azero\times W_0^l\\&\longmapsto (J',f')\in K_0\times\Azero
	\end{split}
	\end{equation}
	The first one is the restriction of the inverse of the chart $F\circ e$ to $A$, hence satisfies $F'=F\circ e(\chi)$; the second one is given by Lemmas \ref{Gdecompolemma} and \ref{Gdecompolemmaanalytic}, hence $e(\chi)=f'\circ e(\xi')$; and the third one is just the projection. The first two maps are obviously analytic isomorphisms onto their image. For the third one, its inverse is given by the formula
	\begin{equation}
	\label{inverseKuranishiStackChart}
	(J',f')\longmapsto (J',f',\Upsilon_0(J'\cdot (F\circ f')))
	\end{equation}
	We note that the composition in \eqref{KuranishiStackChart} is independent of $l$ and that $\mathcal A_0$ is locally modelled on the product of a neighborhood of a point in $K_0$ with some open neighborhood of the identity in $\Azero$. 
	
	Moreover, it shows that $\alpha$ is a smooth morphism, since it is given by the projection map
	\begin{equation}
	\label{alphasmooth}
	(J',f')\in C\subset K_0\times\Azero\longmapsto J'\in K_0
	\end{equation}
	in the chart given by \eqref{KuranishiStackChart} ($C$ is the image of this chart). This also shows that the anchor map $n$ is analytic as it is locally given by the section $J\mapsto (J,Id)$ to \eqref{alphasmooth} for $F=Id$. 
	
	Now, observe that $L^2_l(X,X_J)$ and $L^2_l(X,X_{J\cdot F})$ are isomorphic Banach manifolds. Moreover, composition in $L^2_l(X,X_J)$ is not analytic (cf. Remark \ref{noncommutatif}) but it is when restricted to finite-dimensional complex submanifolds/subspaces containing only $C^\infty$ structures. These two observations show that the multiplication is analytic. In the same way the inverse map of the groupoid is analytic. Finally, since the source map is smooth and the inverse map is analytic, this implies that the target map is also smooth. 
\end{proof}
\begin{definition}
	The {\it Kuranishi stack} associated to $K_0$ is the stackification of {\rm \eqref{tgdef}}.
\end{definition}
We have  
\begin{proposition}
	\label{geometricquotient}
	The geometric quotient of the Kuranishi stack is homeomorphic to the topological space $U_0/\sim$, for $U_0$ defined as in \eqref{Phi0} and $J\sim J'$ is the equivalence relation generated by $J'=J\cdot F$ for $F$ in the neighborhood $\mathcal D_0$ of $\Azero$ in $\D$ of Lemma {\rm \ref{Gdecompolemma}}.
\end{proposition}

This is a direct consequence of definition \eqref{Azero} (compare with \cite{KuranishiNote}). 

\begin{remark}
 \label{localmodeldifference}
 However, the geometric quotient of the Kuranishi stack {\it has no reason to be homeomorphic to} the topological space $U_0/\sim$ for $J\sim J'$ the equivalence relation generated by $J'=J\cdot f$ for $f\in\D$, because there may exist $f$ with $J$ and $J\cdot f$ in $U_0$ but such that $(J,f)$ is not $(U_0,\mathcal D_0)$-admissible. Rephrasing this important remark, the Teichm\"uller stack is not locally isomorphic to the Kuranishi stack, cf. Remark \ref{Catanese}.
\end{remark}

\begin{remark}
	\label{localactionbefore}
	In many cases, the groupoid \eqref{tgdef} is a translation groupoid, although its structure is much more complicated in general. For that reason, in previous versions of this paper, we denote it abusively by $\Azero\times K_0\rightrightarrows K_0$.
\end{remark}

We now want to link the structure of \eqref{tgdef} with the foliated structure of $K_0$ described in \cite{ENS}. Recall that the leaf through a point $J_1$ is the maximal connected subset of $K_0$ all of whose points encode $J_1$ up to isotopy. We have
\begin{proposition}
	\label{autfol}
	The space of connected components of the classes of $\sim$ in $U_0$ is homeomorphic to the leaf space of $K_0$ by its foliated structure.
\end{proposition}
\begin{proof}
	Let $J_2$ be in the leaf through $J_1$. Then there exists an isotopy $(f_t)$ such that  
	\begin{equation*}
	\text{for all } 0\leq t\leq 1\qquad J_1\cdot f_t\in K_0,\qquad f_0\equiv Id,\qquad J_1\cdot f_1=J_2.
	\end{equation*}
	So $(J_1,f_t)$ is $(U_0,\mathcal D_0)$-admissible for all $t$ and $J_2$ belongs to the connected component containing $J_1$ of the equivalence class of $J_1$. The converse is obvious.	
\end{proof}

\section{Connectedness properties of $\mathcal I$ and the graph of $f$-homotopy.}
\label{f-homotopy}

Observe that Kuranishi's Theorem \ref{KuranishiTheorem} implies that $\mathcal I$ is locally $C^\infty$-pathwise connected in $\mathcal E$. Therefore, 

\begin{proposition}
\label{countable1}
We have:
\begin{enumerate}
\item[(i)] There are at most a countable number of connected components of $\mathcal I$ in each $\mathcal E_0$.
\item[(ii)]  Every connected component of $\mathcal I$ is $C^\infty$-pathwise connected.
\end{enumerate}
\end{proposition}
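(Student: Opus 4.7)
The plan is to deduce both (i) and (ii) from the observation stated just before the proposition — namely, that $\mathcal I$ is locally $C^\infty$-pathwise connected in $\mathcal E$ — combined with the second countability of the Fr\'echet manifold $\mathcal E$ (already invoked earlier to ensure that $\pi_0(\mathcal E)$ is countable). First I would pin down precisely why the local connectedness supplied by Kuranishi is $C^\infty$: in a Kuranishi chart $\Phi_0 : U \to K_0 \times L_0$ based at $J_0$, the inverse map $(J,v) \mapsto J\cdot e(v)$ is analytic, so any nearby $J' = J_1\cdot e(v_1) \in U$ can be joined to $J_0$ by concatenating the linear smooth path $t \mapsto J_1\cdot e(tv_1)$, which moves inside the Fr\'echet factor $L_0$, with a smooth path from $J_1$ to $J_0$ inside the finite-dimensional analytic space $K_0$.

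For (i), since $\mathcal E$ is second countable, so is $\mathcal I$ in the induced topology. Local path-connectedness forces each connected component of $\mathcal I$ to be open; in a second countable space, any pairwise disjoint family of non-empty open sets is at most countable. Hence $\pi_0(\mathcal I)$ itself is countable, which is strictly more than what is claimed in (i).

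For (ii), fix a connected component $\mathcal I_0 \subset \mathcal I$ and a basepoint $J \in \mathcal I_0$, and consider the subset $S \subset \mathcal I_0$ of points that can be joined to $J$ by a $C^\infty$-path in $\mathcal I$. Local $C^\infty$-pathwise connectedness makes $S$ open (concatenate a local smooth arc with an existing one) and makes its complement in $\mathcal I_0$ open as well (a small neighborhood of any point outside $S$ is entirely outside $S$). Connectedness of $\mathcal I_0$ together with $J \in S$ then yields $S = \mathcal I_0$.

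The only step that is not purely formal is verifying that one can choose \emph{smooth} paths inside the possibly singular Kuranishi space $K_0$; this is the main technical point. For that I would exploit the fact that $K_0$ embeds as a closed finite-dimensional analytic subspace of a Fr\'echet vector space, so that the curve selection lemma — or, more concretely, an analytic arc along an irreducible component through a given point — produces real-analytic, hence $C^\infty$, paths between sufficiently close points of $K_0$. With this in hand the rest of the argument is entirely classical point-set topology.
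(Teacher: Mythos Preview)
Your argument is correct and is exactly the approach the paper takes: the paper simply records that Kuranishi's Theorem makes $\mathcal I$ locally $C^\infty$-pathwise connected and then states the proposition as an immediate consequence, leaving the routine point-set topology and the smoothness-of-paths-in-$K_0$ issue implicit. You have filled in precisely those details, and your handling of the only nontrivial point --- producing $C^\infty$ arcs inside the possibly singular analytic space $K_0$ via analytic arcs along irreducible components (or curve selection) --- is appropriate; the concatenation of such arcs with the linear path in the $L_0$-factor can of course be smoothed at the junction by a standard reparametrization.
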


\noindent and

\begin{corollary}
\label{countableTeich}
The Teichm\"uller and Riemann moduli stacks have at most a countable number of connected components. Moreover, 
\begin{enumerate}
\item[(i)] The natural projection map from $\mathcal I$ onto $\mathcal T(X)$ induces a bijection
\begin{equation}
\label{ccbij}
\begin{CD}
\pi_0(\mathcal I)@>1:1>>\pi_0(\mathcal T(X))
\end{CD}
\end{equation}
\item[(ii)] The mapping class group $\mathcal M\mathcal C(X)$ acts on both $\pi_0(\mathcal I)$ and $\pi_0(\mathcal T(X))$.
\item[(iii)] Passing to the quotient by the mapping class group $\mathcal M\mathcal C(X)$, the bijection \eqref{ccbij} descends as a bijection
\begin{equation}
\label{ccbij2}
\begin{CD}
\pi_0(\mathcal I)/\mathcal M\mathcal C(X)@>1:1>>\pi_0(\mathcal T(X))/\mathcal M\mathcal C(X)@>1:1>>\pi_0(\mathcal M(X)).
\end{CD}
\end{equation}
\end{enumerate}
\end{corollary}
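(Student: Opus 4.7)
The plan is to reduce everything to Proposition \ref{countable1} together with the continuity and (for $\text{Diff}^0(X)$) connectedness of the diffeomorphism group actions, working throughout with the geometric quotient underlying each stack (as conventioned in section \ref{TRstacks}). For the opening countability claim, Proposition \ref{countable1}(i) together with the countability of $\pi_0(\mathcal E)$ gives that $\pi_0(\mathcal I)$ is countable, and since $\mathcal T(X)$ and $\mathcal M(X)$ are continuous surjective images of $\mathcal I$ (their geometric quotients being $\mathcal I/\text{Diff}^0(X)$ and $\mathcal I/\text{Diff}^+(X)$), their $\pi_0$ are also at most countable.

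The crucial input for (i) is that $\text{Diff}^0(X)$ is by definition pathwise connected, so for each $J\in\mathcal I$ the action map $f\mapsto J\cdot f$ gives a connected orbit $J\cdot \text{Diff}^0(X)\subset \mathcal I$. In particular every orbit is contained in a single connected component of $\mathcal I$, so the assignment $J\mapsto [J]_{\pi_0(\mathcal I)}$ is constant on orbits and descends to a continuous map $\mathcal T(X)\to \pi_0(\mathcal I)$ with discrete target. This map is the two-sided inverse of the surjection $\pi_0(\mathcal I)\to\pi_0(\mathcal T(X))$ induced by the (continuous, surjective) quotient projection, proving the bijection \eqref{ccbij}.

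For (ii), the continuous action of $\text{Diff}^+(X)$ on $\mathcal I$ induces an action on the discrete set $\pi_0(\mathcal I)$, and the identity component $\text{Diff}^0(X)$ acts trivially (by the connected-orbit argument from (i)), so the action factors through $\mathcal{MC}(X)$; the same action, transported through the bijection of (i), is the $\mathcal{MC}(X)$-action on $\pi_0(\mathcal T(X))$. For (iii), the bijection \eqref{ccbij} is tautologically $\mathcal{MC}(X)$-equivariant and hence descends to the first bijection of \eqref{ccbij2} between set-theoretic $\mathcal{MC}(X)$-quotients; the second bijection is the identification $\mathcal M(X)=[\mathcal T(X)/\mathcal{MC}(X)]$ from \eqref{Riem} combined with the fact that taking $\pi_0$ of a topological quotient by a group action commutes with passing to the orbit set. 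I expect the only delicate step to be formalizing this last commutation (and, more generally, the interpretation of $\pi_0$ of a stack as $\pi_0$ of its geometric quotient); once that is granted, the rest is elementary point-set topology fed by Proposition \ref{countable1} and the connectedness of $\text{Diff}^0(X)$.
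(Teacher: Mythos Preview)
Your proof is correct and follows essentially the same approach as the paper's own proof, which simply says ``Just use Proposition \ref{countable1} and the fact that $\text{Diff}^0(X)$ preserves the components of $\mathcal I$.'' You have fleshed out the details that the paper leaves implicit, in particular the connected-orbit argument and the commutation of $\pi_0$ with the quotient, but the underlying strategy is identical.
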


\begin{proof}
Just use Proposition \ref{countable1} and the fact that $\text{Diff}^0(X)$ leaves the components of $\mathcal I$ invariant.
\end{proof}

For further use, we let
\begin{equation}
\label{mccc}
[\phi]\in\mathcal M\mathcal C(X)\longmapsto [\mathcal I_0\cdot \phi]\in\pi_0(\mathcal I)
\end{equation}
denote the map given by the action of the mapping class group onto a fixed component $\mathcal I_0$.

\begin{remark}
\label{countableexamples}
For surfaces, the number of connected components of $\mathcal M(X)$, that is the number of connected components of $\mathcal I$ up to the action of the mapping class group,  is finite as soon as it contains a projective manifold \cite{FM}. However, it may be more than one, see \cite{Cat2}. 
In dimension $3$, there are examples of manifolds with $\mathcal M(X)$, henceforth $\I$ having infinitely many connected components, as $\mathbb S^1\times\mathbb S^{4n-1}$ for $n>1$, see \cite{Morita}, or the product of a K3 surface with $\mathbb S^2$, see \cite{LeBrun}.
\vspace{5pt}\\
In the above examples, we note that $\mathcal E$ also has infinitely many connected components. Indeed each connected component of $\mathcal E$ contains exactly one connected component of $\mathcal I$. This leads to the following problem:
\begin{problem}
 \label{ccpb}
Find a $C^\infty$ compact manifold $X$ with $\mathcal E$ connected and $\mathcal I$ having an infinite number of connected components. 
\end{problem}

Probably, $\mathbb S^1\times\mathbb S^{4n-3}$ for $n>1$ give such an example. In particular, it is proven in \cite{Morita} that $\mathcal E$ has a single connected component. And the structures of \cite{BVdV} should give the countably many connected components of $\mathcal I$. Since they have pairwise not biholomorphic universal covers, this should give the countably many connected components of $\I$ and even of $\mathcal M(X)$. But proving this is the case seems to be out of reach for the moment. Observe that the first step in showing this result would be to establish that any deformation {\it in the large} of a Hopf manifold is a Hopf manifold, which is still an open problem as far as we know.
\vspace{5pt}\\
The case of surfaces is somewhat different, see Remark \ref{CC2}.
\end{remark}

Recall that Kodaira and Spencer defined in \cite{K-S1} the notion of {\it $c$-homotopy}. Taking into account Kuranishi's Theorem, it turns out that we may equivalently define it by saying that $J_1\in\mathcal I$ and $J_2\in\mathcal I$ are $c$-homotopic if there exists a smooth path in $\mathcal I$ joining them. That is if they belong to the same connected component $\mathcal I_0$. Similarly, we define

\begin{definition}
Let $J_1$ and $J_2$ be two points of the same $\mathcal I_0$. Then we say that they are {\it $f$-homotopic} if there exists a smooth path in $\mathcal I_0$ joining them such that the function $h^0$ is constant along it.
\end{definition}

Recall also that, if $K$ denotes the Kuranishi space of some $J_0$, then for any $c\in\mathbb N$, the sets
\begin{equation}
\label{Kc}
K^c=\{J\in K\mid h^0(J)\geq c\}
\end{equation}
are analytic subspaces of $K$, cf. \cite{Gr}. Using Kuranishi's Theorem, we immediately obtain that the sets
\begin{equation}
\label{Ic}
\mathcal I^c=\{J\in \mathcal I\mid h^0(J)\geq c\}
\end{equation}
are analytic subspaces\footnote{To be more precise, one should pass to operators of class $L^2_l$ as in Remark \ref{Sobolev} to have that $\mathcal I$ and $\mathcal I^c$ are Banach analytic spaces in the sense of \cite{Douady}.} of $\mathcal I$. Observe that $\mathcal I^c$ is the union of all $f$-homotopy classes whose $h^0$ is greater than or equal to $c$.
\vspace{5pt}\\
The analyticity of \eqref{Kc} comes indeed from the fact that the function $h^0$ is upper semi-continuous for the Zariski topology, see \cite{Gr}. But this also implies
\begin{proposition}
\label{countable2}
There are at most a countable number of $f$-homotopy classes in each $\mathcal I_0$.
\end{proposition}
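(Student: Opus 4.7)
The plan is to reduce the problem to a finite-dimensional, local analytic statement via Kuranishi's Theorem, and then to glue via a countable Kuranishi atlas. An $f$-homotopy class inside $\mathcal I_0$ is nothing else than a $C^\infty$-path-connected component of a level set
\[
\mathcal I_{(c)} := \mathcal I^c\setminus\mathcal I^{c+1}=\{J\in\mathcal I\mid h^0(J)=c\},
\]
intersected with $\mathcal I_0$, so I will count such components for each $c\in\mathbb N$ and then sum over $c$.

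First, I would cover $\mathcal I_0$ by a countable family $\{U_i\}_{i\in\mathbb N}$ of Kuranishi neighborhoods, each of the form $U_i\cong K_i\times L_i$ by Theorem \ref{KuranishiTheorem}; such a countable cover exists because $\mathcal E$ is second countable (the Fréchet topology used here is modelled on a separable space of smooth sections, cf.\ the argument already invoked to prove Proposition \ref{countable1}). Shrinking the $U_i$ if necessary, I may assume each $K_i$ is a sufficiently small representative of the Kuranishi germ so as to control its semi-analytic stratification (see below). The factor $L_i$ is an open subset of a complex Fréchet space, hence connected and locally path-connected.

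Next, for each $i$ and each $c$, I want to show that $\mathcal I_{(c)}\cap U_i$ has only finitely many path-connected components. Via the Kuranishi isomorphism \eqref{Phi0} and upper semi-continuity of $h^0$ on $K_i$ (i.e.\ the analyticity of the $K^c$, see \eqref{Kc}), this intersection corresponds to $(K_i^c\setminus K_i^{c+1})\times L_i$, where $K_i^c\setminus K_i^{c+1}$ is the difference of two closed complex-analytic subspaces of the finite-dimensional analytic space $K_i$. Such a set is semi-analytic, so by the local finiteness of connected components of semi-analytic sets (\L{}ojasiewicz's stratification), after possibly shrinking $K_i$ it has only finitely many connected components, each of which is locally path-connected. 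Multiplying by the connected factor $L_i$, the same holds for $\mathcal I_{(c)}\cap U_i$.

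Finally, I would conclude by a combinatorial gluing argument. Consider the countable collection
\[
\mathcal P:=\bigl\{\,\text{path-components of }\mathcal I_{(c)}\cap U_i\ \big|\ i\in\mathbb N,\ c\in\mathbb N\bigr\},
\]
which is countable by the two previous steps. Declare two elements of $\mathcal P$ equivalent if they have nonempty intersection, and take the transitive closure; since each $f$-homotopy class in $\mathcal I_0$ is locally path-connected (use Kuranishi charts again to produce smooth paths staying in a fixed level set), it is exactly the union of the members of a single equivalence class in $\mathcal P$. A countable set has at most countably many equivalence classes, so there are at most countably many $f$-homotopy classes in $\mathcal I_0$. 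The main delicate point is the local finiteness in the second step; this is where the finite-dimensionality of $K_i$ from Theorem \ref{KuranishiTheorem} is essential, since an infinite-dimensional analogue would leave open the possibility of infinitely many level-$c$ components accumulating at $J_0$.
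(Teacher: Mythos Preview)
Your argument is correct and is essentially a detailed unpacking of the paper's one-line proof, which simply invokes Zariski upper semi-continuity of $h^0$ (hence analyticity of the $K^c$) together with the countability of the topology already used for Proposition~\ref{countable1}. The paper leaves implicit exactly the steps you spell out: reducing via Kuranishi charts to the finite-dimensional picture, using local finiteness of components of the semi-analytic strata $K_i^c\setminus K_i^{c+1}$, and then gluing over a countable cover.
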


Define a weighted and directed graph as follows. Each $f$-homotopy class $\mathcal F$ of $\mathcal I$ corresponds to a vertex with weight equal to $h^0(J)$ for $J\in\mathcal F$. Two vertices $\mathcal F_1$ and $\mathcal F_2$ are related by an oriented edge if there exists a smooth path $c$ in $\mathcal I$ such that
\begin{enumerate}
\item[(i)] The structure $c(0)$ belongs to $\mathcal F_1$.
\item[(ii)] For $t>0$, the structure $c(t)$ belongs to the class $\mathcal F_2$. 
\end{enumerate}
Observe that the edge is directed from the highest weight to the lowest weight.

\begin{definition}
\label{graph}
The previous graph is called {\it the graph of $f$-homotopy of $\mathcal I$}.
\end{definition}

\begin{proposition}
\label{graphproperties}
The graph of $f$-homotopy has the following properties:
\begin{enumerate}
\item[(i)] It has at most a countable number of connected components. Moreover, there is a $1:1$ correspondence between these connected components and the connected components of $\mathcal I$.
\item[(ii)] It has at most a countable number of vertices.
\item[(iii)] Each vertex is attached to at most a countable number of edges.
\item[(iv)] There is no directed loop.
\item[(v)] Every directed path is finite.
\end{enumerate}
\end{proposition}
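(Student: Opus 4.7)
I handle the five items roughly in increasing order of difficulty. Parts (ii) and (iii) are immediate: (ii) is Proposition \ref{countable2} itself, and (iii) follows because the edge relation, being a binary relation on vertices, yields at most one oriented edge per ordered pair, so each vertex is incident to at most $2\,|V|$ edges --- a countable quantity by (ii).

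For (iv) and (v), I show that $h^0$ strictly decreases along every directed edge. Given an edge $\F_1 \to \F_2$ with witnessing path $c$, the upper semi-continuity of $h^0$ on $\mathcal I$ (the same fact invoked to establish the analyticity of \eqref{Ic}) gives
\[
h^0(\F_1) = h^0(c(0)) \;\geq\; \limsup_{t\to 0^+} h^0(c(t)) \;=\; h^0(\F_2).
\]
Equality would make $h^0$ constant along $c$, so $c(0)$ and any $c(t)$ with $t>0$ would be $f$-homotopic, contradicting $\F_1\neq\F_2$. Weights being non-negative integers that strictly decrease along directed edges, (v) is immediate (a directed path has length bounded by the starting weight) and (iv) follows (a directed cycle would return to its starting weight after a strict decrease).

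For (i), the assignment sending each vertex to the connected component of $\mathcal I$ containing it is constant on edges (the witnessing path stays in a single component) and so descends to a map $\pi_0(\text{graph}) \to \pi_0(\mathcal I)$. This map is surjective (each component of $\mathcal I$ contains at least one $f$-homotopy class), and the target is countable by Proposition \ref{countable1}(i), so $\pi_0(\text{graph})$ is countable. The delicate point --- and the main obstacle of the proof --- is injectivity: any two $f$-homotopy classes $\F, \F' \subset \mathcal I_0$ must be joined by a chain of edges. Using Proposition \ref{countable1}(ii), pick a smooth path $c:[0,1]\to\mathcal I_0$ from $\F$ to $\F'$, and consider the set
\[
S = \{\,t\in[0,1] : \text{the $f$-homotopy class of } c(t) \text{ is graph-connected to } \F\,\}.
\]
I would show that $S$ is simultaneously open and closed in $[0,1]$, hence equal to $[0,1]$; specializing at $t=1$ yields the desired chain of edges.

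Both the openness and closedness of $S$ rest on two ingredients supplied by Theorem \ref{KuranishiTheorem} combined with Grauert's theorem: (a) in any Kuranishi chart the loci $\{h^0\geq k\}$ are analytic subspaces of $K_0$, so only finitely many $f$-homotopy classes meet a sufficiently small neighborhood of any point; and (b) if $J$ lies in the closure of a distinct $f$-homotopy class $\F^*$ within its Kuranishi chart, then the curve selection lemma for analytic sets produces a germ of smooth arc from $J$ into $\F^*$, realizing an oriented edge (in one direction or the other) between the class of $J$ and $\F^*$. Openness of $S$ is then a direct consequence of (a) and (b) applied in the Kuranishi chart around $c(t)$; closedness uses (a) to reduce a limit $t_n\to t$ with $t_n\in S$ to an accumulation of a single class onto the class of $c(t)$, and (b) to upgrade this accumulation to a genuine edge. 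The technical heart of the argument is thus the curve selection input (b), translating analytic closure within the Kuranishi space into a smooth arc realizing the defining condition of an edge.
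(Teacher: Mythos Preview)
Your treatment of (ii)--(v) matches the paper's: the paper simply cites Propositions \ref{countable1} and \ref{countable2} for (i)--(iii) and invokes the strict weight decrease along edges for (iv)--(v), the latter having already been observed in the sentence preceding the statement. Your upper semi-continuity argument for strict decrease is exactly the justification the paper leaves implicit.

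For (i), however, you supply substantially more than the paper does. The paper's entire proof of (i) is the phrase ``come from Proposition \ref{countable1}, Proposition \ref{countable2} and the definitions,'' with no further discussion of why the correspondence is bijective. You are right that the nontrivial direction is showing that any two $f$-homotopy classes inside a common $\mathcal I_0$ are joined by a chain of edges, and your proposed toolkit --- local finiteness of classes via the analyticity of the $K^c$ in each Kuranishi chart, plus curve selection to manufacture edges --- is the correct one. What you have written is a genuine fleshing-out of a point the paper passes over.

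One comment on the execution: your open--closed argument for $S$ is slightly loose at the openness step, since $c(t_0)$ need not lie in the closure of \emph{every} class meeting a small chart around it, so (b) applied only at $c(t_0)$ does not directly reach all nearby classes. A cleaner route with the same ingredients: by compactness of $c([0,1])$ and (a), only finitely many classes $\mathcal G_1,\dots,\mathcal G_M$ occur along the path; the sets $c^{-1}(\mathcal G_i)$ partition $[0,1]$, and connectedness of $[0,1]$ forces the closures of these sets to link up into a connected adjacency pattern. At any $t^*\in\overline{c^{-1}(\mathcal G_i)}\cap\overline{c^{-1}(\mathcal G_j)}$ one has $c(t^*)\in\overline{\mathcal G_i}\cap\overline{\mathcal G_j}$, and curve selection at $c(t^*)$ produces edges from the class of $c(t^*)$ to both $\mathcal G_i$ and $\mathcal G_j$. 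Hence adjacent classes are graph-connected, and the conclusion follows. This is your argument reorganized to sidestep the openness subtlety.
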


\begin{proof}
Items (i), (ii) and (iii) come
from Proposition \ref{countable1}, Proposition \ref{countable2} and the definitions; items (iv) and (v) come from the fact that the weights are strictly decreasing along an edge.
\end{proof}

The group $\mathcal M\mathcal C(X)$ acts on the graph of $f$-homotopy. We detail in the following Proposition some trivial properties of this action.

\begin{proposition}
\label{graphaction}
The action of $\mathcal M\mathcal C(X)$ onto the graph of $f$-homotopy 
\begin{enumerate}
\item[(i)] sends a connected component onto a connected component.
\item[(ii)] sends a vertex to a vertex of same weight.
\item[(iii)] respects the number and the orientation of the edges attached to a vertex.
\end{enumerate}
\end{proposition}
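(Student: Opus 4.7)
The three items all follow from the observation that the right action of $\text{Diff}^+(X)$ on $\mathcal I$ is a homeomorphism (in fact an analytic automorphism) of $\mathcal I$ which moreover preserves the biholomorphism class of each structure. My plan is to package this into one lemma and then deduce the three items.

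First, I would fix $\phi\in\text{Diff}^+(X)$ and consider the map $J\mapsto J\cdot\phi$ on $\mathcal I$. This map is a diffeomorphism of $\mathcal I$ (its inverse being $J\mapsto J\cdot\phi^{-1}$), so it sends smooth paths to smooth paths and thus connected components to connected components. Since $X_J$ and $X_{J\cdot\phi}$ are biholomorphic via $\phi$ itself, we have $h^i(J)=h^i(J\cdot\phi)$ for every $i$, in particular for $i=0$. Consequently, a smooth path $c(t)$ in $\mathcal I$ along which $h^0$ is constant is mapped to a smooth path $c(t)\cdot\phi$ along which $h^0$ is equally constant, which shows that the action sends an $f$-homotopy class to an $f$-homotopy class of the same weight $h^0$. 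The fact that $\text{Diff}^0(X)$ acts trivially on the set of $f$-homotopy classes (it can only move points inside their class, by Proposition 3.3(ii) applied to paths generated by isotopies) then guarantees that the induced action of the quotient $\mathcal M\mathcal C(X)=\text{Diff}^+(X)/\text{Diff}^0(X)$ on vertices is well defined. This will establish (ii).

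For (i), I would combine the above with Corollary 3.2(ii), or argue directly: by Proposition 3.6(i), a connected component of the graph is in bijection with a connected component of $\mathcal I$, and the previous paragraph shows that $\phi$ sends one to another, proving that the induced action on the set of connected components of the graph is well defined.

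For (iii), I would observe that if $c:[0,\varepsilon)\to\mathcal I$ is a smooth path with $c(0)\in\mathcal F_1$ and $c(t)\in\mathcal F_2$ for $t>0$, representing an edge from $\mathcal F_1$ to $\mathcal F_2$, then $c(\cdot)\cdot\phi$ is again a smooth path in $\mathcal I$ with $c(0)\cdot\phi\in\mathcal F_1\cdot[\phi]$ and $c(t)\cdot\phi\in\mathcal F_2\cdot[\phi]$ for $t>0$. Conversely, applying $\phi^{-1}$ shows this procedure is a bijection between edges at $\mathcal F$ and edges at $\mathcal F\cdot[\phi]$. Since the orientation is defined by comparing the weights $h^0$ of the endpoints, which are preserved by (ii), the orientation is also preserved.

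There is no serious obstacle here; the only point one has to be a bit careful about is that the map $[\phi]\mapsto(\text{class }c(\cdot)\cdot\phi)$ is well defined on $\mathcal M\mathcal C(X)$, which amounts precisely to saying that an isotopy in $\text{Diff}^0(X)$ does not alter the $f$-homotopy class of the resulting path, and this is a special case of the constancy of $h^0$ inside a component of $\mathcal I$ orbit already used above.
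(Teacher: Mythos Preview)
Your proof is correct. The paper does not give any proof of this proposition at all; it simply introduces it as ``some trivial properties of this action,'' so your argument is a faithful spelling-out of what the author leaves to the reader. The only quibble is that your internal cross-references (``Proposition 3.3(ii)'', ``Proposition 3.6(i)'') do not match the paper's numbering, but the statements you invoke (pathwise connectedness of components of $\mathcal I$, and the bijection between connected components of the graph and of $\mathcal I$) are the right ones.
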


Hence, the existence of diffeomorphisms acting non trivially on the graph implies strong symmetry properties of the graph. Indeed, if some $f$ sends a connected component of $\mathcal I$ onto a different one, then these two connected components of $\I$ must be completely isomorphic. 

\begin{example}{\bf Hopf surfaces.}
\label{Hopf}
Let $X=\mathbb S^3\times\mathbb S^1$. By classical results of Kodaira \cite{Ko}, \cite{BHPV}, every complex surface diffeomorphic to $X$ is a (primary) Hopf surface. There is only one connected component of complex structures up to action of the mapping class group, since any Hopf surface is $c$-homotopic to any other one, see \cite{We}. The mapping class group of $X$ is a non trivial group\footnote{\label{Hatcher} It was pointed out to me by A. Hatcher that no mapping class group of a closed $4$-manifold seems to be known.}. Indeed, observe that it contains at least the elements
$$
f(z,w)=(\bar z, \bar w)\quad\text{ and }\quad g(z,w)=(z, P(z)\cdot w)
$$
for $(z,w)\in\mathbb S^1\times\mathbb S^3\subset \mathbb C\times\mathbb C^2$ and $P$ a homotopically non trivial loop in $\text{SO}_4$, since both have non trivial action in homology. Even without knowing the mapping class group, we can characterize its action on $\mathcal I$.
Following \cite[p.24]{We}, we separate Hopf surfaces into five classes namely classes IV, III, IIa, IIb and IIc.
\begin{lemma}
\label{Hopfcc}
Let $f$ be a diffeomorphism of $X$. Assume that $f$ leaves a connected component of $\mathcal I$ invariant. Then $f$ is $C^\infty$-isotopic to the identity.
\end{lemma}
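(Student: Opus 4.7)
My plan is to fix a well-chosen complex structure $J_0\in\mathcal{I}$ and exploit the smallness and connectedness of its automorphism group to force any component-preserving diffeomorphism $f$ to be isotopic to the identity.

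First, I would take $J_0$ to correspond to a generic type-IV Hopf surface, $(X,J_0)\cong H_{A_0}=(\mathbb C^2\setminus\{0\})/\langle A_0\rangle$ with $A_0=\mathrm{diag}(\alpha,\beta)$, $|\alpha|<|\beta|<1$, non-resonant and with no extra discrete symmetry. For such generic parameters, $\mathrm{Aut}(H_{A_0})=\mathrm{Aut}^0(H_{A_0})\cong(\mathbb C^*)^2/\langle A_0\rangle$ is a connected $2$-complex-dimensional abelian Lie group. Any connected subgroup of $\mathrm{Diff}^+(X)$ lies in $\mathrm{Diff}^0(X)$, so $\mathrm{Aut}(X_{J_0})\subset\mathrm{Diff}^0(X)$, i.e.\ $\mathrm{Aut}^1(X_{J_0})=\mathrm{Aut}(X_{J_0})$.

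Next, assume $f$ preserves every component of $\mathcal{I}$. In particular, $J_0$ and $J_0\cdot f$ lie in the same component $\mathcal{I}_0$, so (by Proposition \ref{countable1}(ii)) a smooth path $c\colon[0,1]\to\mathcal{I}_0$ joins them. The biholomorphism $f\colon(X,J_0\cdot f)\to(X,J_0)$ identifies the endpoints as the \emph{same} Hopf surface, while generic type-IV Hopf surfaces $H_A$ are rigid in the sense that distinct $A$'s modulo $\mathrm{Aut}^0$-action give non-isomorphic Hopf surfaces. Covering $c$ by Kuranishi charts via Theorem \ref{KuranishiTheorem} and projecting through the retractions $\Xi$ of \eqref{Xi0}, the path $c$ decomposes into a Kuranishi-moduli motion and a $\mathrm{Diff}^0(X)$-motion; the isomorphism of the endpoints forces the Kuranishi motion to be through $\mathrm{Aut}^0$-equivalent parameters. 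Globalizing, one obtains an isotopy $\Psi_t\in\mathrm{Diff}^0(X)$ with $\Psi_0=\mathrm{Id}$ and $\Psi_1^*(J_0\cdot f)=J_0$. Then $f\Psi_1\in\mathrm{Aut}(X_{J_0})\subset\mathrm{Diff}^0(X)$ by the first step, and $\Psi_1\in\mathrm{Diff}^0(X)$, so $f\in\mathrm{Diff}^0(X)$.

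The hard part is the lifting in the second step: a generic path in $\mathcal{I}$ passes through multiple $\mathrm{Diff}^0(X)$-orbits, so the existence of the isotopy $\Psi_t$ is not formal. What rescues us is that both endpoints $J_0$ and $J_0\cdot f$ define the same Hopf surface (via $f$), so the induced trajectory in the local Teichm\"uller space $K_{J_0}/\mathrm{Aut}^0(X_{J_0})$ is closed at a point with trivial isotropy after quotienting by the connected $\mathrm{Aut}^0$, and hence lifts to an isotopy. Rigorously controlling this global lift along the full path is a local version of the main construction of the paper; in effect, it is an instance of the holonomy groupoid being trivial for a generic IV-structure, which I expect is the reason the author places this lemma immediately after introducing the Wehler stratification, so that the appeal to the later foliation and holonomy machinery of sections \ref{TGfoliationsection}--\ref{Teichmueller} is minimized for Hopf surfaces.
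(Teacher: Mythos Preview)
Your overall strategy is sound and matches the paper's: pick a base structure $J_0$ with connected automorphism group, produce a biholomorphism $(X,J_0)\to(X,J_0\cdot f)$ that is smoothly isotopic to the identity, then conclude that $f$ differs from an element of $\text{Diff}^0(X)$ by an automorphism, hence lies in $\text{Diff}^0(X)$. A terminological slip: what you describe (diagonal contraction with distinct moduli, $2$-dimensional automorphism group) is type IIc in Wehler's classification, not type IV; type IV has $h^0=4$.

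The substantive gap is in your second step. You want an isotopy $\Psi_t\in\text{Diff}^0(X)$ with $\Psi_1^*(J_0\cdot f)=J_0$, i.e.\ a biholomorphism between the two structures lying in $\text{Diff}^0(X)$. You acknowledge that the path in $\mathcal I$ crosses many orbits, then argue that because the endpoints are biholomorphic the induced loop in $K_{J_0}/\text{Aut}^0$ ``lifts to an isotopy''. But a closed loop in a leaf space need not lift to a closed path; that is precisely the holonomy obstruction. Appealing to the later holonomy machinery is circular here: the lemma is used as input to the examples, and in any case nothing in Sections \ref{TGfoliationsection}--\ref{Teichmueller} asserts trivial holonomy at a generic Hopf structure.

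The paper closes this gap by a concrete construction that bypasses abstract lifting entirely. Using \cite[Theorem 7.2]{circle} and the explicit description of Kuranishi spaces in \cite{We}, one glues Kuranishi spaces along the path into a manifold $K$ in which the type IIa locus has codimension $2$. A transversality move then pushes the path off type IIa, so every fiber along the new path is a \emph{linear} Hopf surface. The family is then globally realized as the quotient of $(\mathbb C^2\setminus\{0\})\times[0,1]$ by $(Z,t)\mapsto(A(t)Z,t)$ for a smooth path $A(t)$ in $\text{GL}_2(\mathbb C)$. Since the endpoints are biholomorphic, $A(1)=MA(0)M^{-1}$, and $Z\mapsto MZ$ descends to a biholomorphism $X_0\to X_{J_0\cdot f}$ that is isotopic to the identity through the explicit smooth trivialization of this quotient family. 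This is the missing idea: replace the abstract path by one living entirely inside the linear locus, where the universal family has an explicit global model and the required isotopy is visible.
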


\begin{proof}
Let $J_0$ represent a Hopf surface of type IIc, that is associated to a contracting diagonal matrix
\begin{equation}
\label{matrice}
\begin{pmatrix}
\lambda_1 &0\cr
0& \lambda_2
\end{pmatrix}
\end{equation}
with $0<\vert\lambda_1\vert<\vert\lambda_2\vert<1$.
\vspace{5pt}\\
Assume that $J_0\cdot f$ belongs to the same connected component as $J_0$. Then there exists a $c$-homotopy of Hopf surfaces $\mathcal X\to [0,1]$ with endpoints $X_0$ and $X_{J_0\cdot f}$: just take the tautological family above a smooth path in $\I$ joining $J_0$ to $J_0\cdot f$.  By \cite[Theorem 8.1]{circle}, there exists an analytic space $K$ encoding the complex structures in a neighborhood of the path and obtained by gluing together a finite number of Kuranishi spaces of Hopf surfaces (up to taking the product with some vector space) such that the family $\pi$ maps onto a smooth path into $K$. Using the description of the Kuranishi spaces of Hopf surfaces in \cite[Theorem 2]{We}, it is easy to check that 
\begin{enumerate}
\item[(i)] $K$ is a manifold.
\item[(ii)] The points of $K$ encoding the type IIa Hopf surfaces belongs to a submanifold of complex codimension at least $1$.
\end{enumerate}
Hence, by transversality, we may replace the initial path defining the $c$-homotopy with a new path and a thus a new $c$-homotopy with same endpoints and such that all surfaces along this path are linear, that is not of type IIa. 
Such a family is locally and thus globally since the base is an interval isomorphic to the quotient of $\mathbb C^2\setminus\{(0,0)\}\times [0,1]$ by the action generated by
$$
(Z,t)\longmapsto (A(t)\cdot Z, t)
$$
for $A$ a smooth map from $[0,1]$ into $\text{GL}_2(\mathbb C)$ which is equal to \eqref{matrice} at $0$.
In particular, this means that $A(1)$ is conjugated to \eqref{matrice} by, say, $M$. Hence the map
\begin{equation}
\label{lineaire}
Z\in \mathbb C^2\setminus\{(0,0)\}\longmapsto M\cdot Z\in \mathbb C^2\setminus\{(0,0)\}
\end{equation}
induces a biholomorphism between $X_0$ and $X_{J_0\cdot f}$, which is smoothly isotopic to the identity. Composing $f$ with the inverse of this biholomorphism, this gives an automorphism of $X_0$ which corresponds to the same element of the mapping class group as $f$. 
\vspace{5pt}\\
Since every automorphism of every Hopf surface is isotopic to the identity (cf. \cite[p.24]{We} where all the automorphism groups are described), we are done.
\end{proof}
From Lemma \ref{Hopfcc}, we deduce that $\mathcal I$ decomposes into several identical connected components that are exchanged by action of the mapping class group. In particular,

\begin{corollary}
\label{Hopfcc2}
The map {\rm \eqref{mccc}} is a $1:1$ correspondence between the mapping class group of $X$ and the set of connected components of $\mathcal I$.
\end{corollary}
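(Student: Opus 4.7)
The plan is to recognize the map \eqref{mccc} as the orbit map of $\mathcal I_0$ under the natural (right) action of $\mathcal M\mathcal C(X)$ on $\pi_0(\mathcal I)$---well-defined because $\text{Diff}^0(X)$ preserves connected components---and then to show this action is simply transitive.

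For transitivity (hence surjectivity of \eqref{mccc}), I would invoke the fact recalled at the start of the example: any Hopf surface is $c$-homotopic to any other up to $\mathcal M\mathcal C(X)$. Concretely, given any component $\mathcal I_1$ and a point $J_1\in\mathcal I_1$, there is $\phi\in\text{Diff}^+(X)$ with $J_1\cdot\phi\in\mathcal I_0$; since $\text{Diff}^+(X)$ permutes connected components, this forces $\mathcal I_1\cdot\phi=\mathcal I_0$, so $[\phi^{-1}]$ maps to $[\mathcal I_1]$ under \eqref{mccc}.

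For injectivity (equivalently, triviality of the $\mathcal M\mathcal C(X)$-stabilizer of $\mathcal I_0$), suppose $\mathcal I_0\cdot\phi_1$ and $\mathcal I_0\cdot\phi_2$ give the same component, and set $f=\phi_1\phi_2^{-1}$, so that $f$ stabilizes $\mathcal I_0$. I would reduce to Lemma \ref{Hopfcc}, whose proof in fact only uses that $f$ fix the component of some specific type IIc Hopf surface. If $\mathcal I_0$ contains such a representative, the lemma applies directly; otherwise, by the transitivity just proved, pick $\psi\in\text{Diff}^+(X)$ so that $\mathcal I_0\cdot\psi$ does contain a type IIc representative. Then the conjugate $\psi^{-1}f\psi$ stabilizes $\mathcal I_0\cdot\psi$ (since $(\mathcal I_0\cdot\psi)\cdot(\psi^{-1}f\psi)=\mathcal I_0\cdot f\cdot\psi=\mathcal I_0\cdot\psi$), Lemma \ref{Hopfcc} gives $\psi^{-1}f\psi\in\text{Diff}^0(X)$, and normality of $\text{Diff}^0(X)$ in $\text{Diff}^+(X)$ yields $f\in\text{Diff}^0(X)$, i.e.\ $[\phi_1]=[\phi_2]$ in $\mathcal M\mathcal C(X)$.

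The only real obstacle I foresee is bookkeeping around the right-action convention and composition in $\text{Diff}^+(X)$: one must verify $(\mathcal I_0\cdot\phi)\cdot\psi=\mathcal I_0\cdot(\phi\psi)$ in the chosen group law so that the conjugation trick in the injectivity step makes sense. The substantive inputs---freeness of the stabilizer at a type IIc component (Lemma \ref{Hopfcc}) and transitivity of the $\mathcal M\mathcal C(X)$-action on $\pi_0(\mathcal I)$---are both already at hand.
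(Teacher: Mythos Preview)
Your argument is correct and follows the paper's approach: surjectivity from the fact that all Hopf surfaces are $c$-homotopic, injectivity from Lemma~\ref{Hopfcc}. One simplification: the conjugation step is unnecessary, because \emph{every} component of $\mathcal I$ already contains a type IIc structure---given any $J\in\mathcal I_0$, the $c$-homotopy from $X_J$ to a type IIc surface (from \cite{We}) lifts via Ehresmann to a path in $\mathcal I$ starting at $J$ and ending at a type IIc point of $\mathcal I_0$---so the proof of Lemma~\ref{Hopfcc} applies directly to any $f$ fixing $\mathcal I_0$.
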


\begin{proof}
Since all Hopf surfaces are $c$-homotopic, \eqref{mccc} is surjective. And it is injective by Lemma \ref{Hopfcc}.
\end{proof}

Let us focus on one of the connected components. It corresponds to a graph with an infinite number of vertices: one of weight $4$ (class IV), one of weight $3$ for each value of $p>1$ (class III of weight $p$) and one of weight $2$ (classes IIa, IIb and IIc together). There is an edge joining $4$ to $2$ and one joining $3$ to $2$ for each value of $p$. There is no edge from $4$ to any vertex $3$ because it is not possible to deform a Hopf surface of class IV onto one of class III without crossing the $f$-homotopy class of weight $2$. In the same way, there is no edge between two different vertices of weight $3$, because every $c$-homotopy from a Hopf surface of type III with weight $p$ to a Hopf surface of type III with weight $q\not =p$ must pass through type II Hopf surfaces.

In Figure \ref{Hopffig}, we draw the graph in a synthetic way. The vertex $3p$ encodes indeed the uncountable set of vertices of weight $3$ labelled by $p>1$. The single edge from $3$ to $2$ remembers all the edges from vertices $3$ of label $p$ onto the vertex $2$.
\begin{figure}[H]
    \centering
    \includegraphics[width=0.3\textwidth]{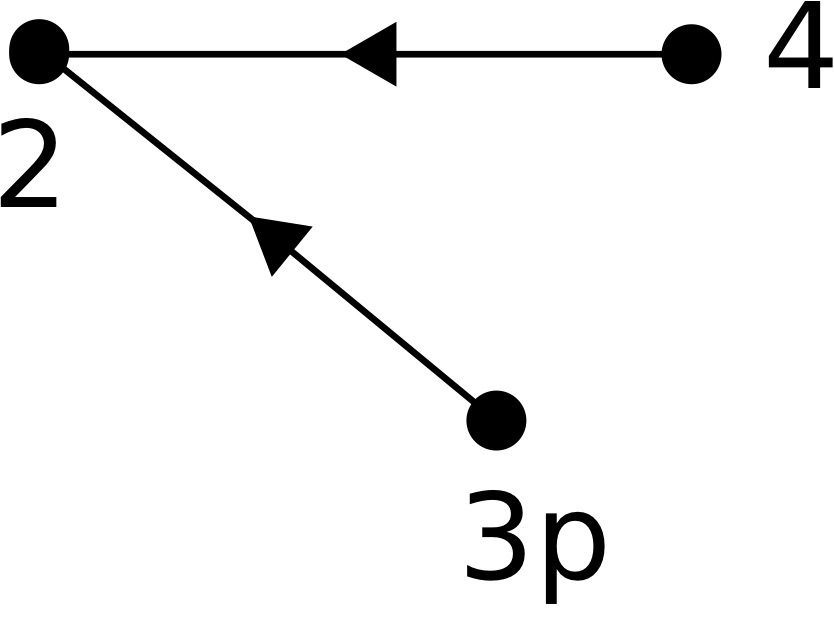}
    \caption{A component of the graph of $f$-homotopy for Hopf surfaces.}
    \label{Hopffig}
\end{figure}
\end{example}

\begin{remark}
Using the five classes of Hopf surfaces, one obtains a graph of small deformations which is more precise and complicated than the graph of $f$-homotopy, see \cite{We}, p.31. The graph of $f$-homotopy must be considered as a very rough decomposition of $\mathcal I$.
\end{remark}

\begin{example} {\bf Hirzebruch surfaces.}
\label{Hirzebruch}
Consider $X=\mathbb S^2\times\mathbb S^2$. It admits complex structures of even Hirzebruch surfaces $\mathbb F_{2a}$. By \cite{F-Q}, this exhausts the set of complex surfaces diffeomorphic to $X$. Then there is only one connected component of complex structures up to action of the mapping class group. The mapping class group is not known (cf. footnote \ref{Hatcher}) but contains at least four elements generated by
$$
f(x,y)=(a(x),a(y))\quad\text{ and }\quad g(x,y)=(y,x).
$$
where $a$ is the antipodal map of $\mathbb S^2$. Analogously to Lemma \ref{Hopfcc} and Corollary \ref{Hopfcc2}, we have
\begin{lemma}
\label{Hirzebruchcc}
Let $\phi$ be a diffeomorphism of $X$. Assume that $\phi$ leaves a connected component of $\mathcal I$ invariant. Then $\phi$ is $C^\infty$-isotopic either to $g$ or to the identity.
\end{lemma}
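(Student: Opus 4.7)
The plan is to mimic the proof of Lemma \ref{Hopfcc} almost verbatim, with the Hirzebruch surface $\mathbb{F}_0=\mathbb{P}^1\times\mathbb{P}^1$ playing the role of the generic type IIc Hopf surface. First I would pick $J_0\in\mathcal I$ with $X_0$ biholomorphic to $\mathbb{F}_0$. Since $\phi$ preserves the connected components of $\mathcal I$, the structure $J_0\cdot\phi$ lies in the same connected component as $J_0$, so there exists a smooth family $\mathcal X\to[0,1]$ joining them. By \cite[Theorem 7.2]{circle} this family factors through a smooth path $\gamma:[0,1]\to K$ into a finite-dimensional analytic space $K$ obtained by gluing finitely many Kuranishi spaces of Hirzebruch surfaces (possibly after taking products with linear spaces).

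Next, I would use the classical description of the Kuranishi spaces of Hirzebruch surfaces. The Kuranishi space of $\mathbb{F}_{2a}$ for $a\geq 1$ is smooth of complex dimension $2a-1$, the basepoint is $\mathbb{F}_{2a}$, and the stratum of $\mathbb{F}_{2k}$-structures for each $1\leq k<a$ is a proper closed complex analytic subset; the open dense stratum consists of $\mathbb{F}_0$-structures. Consequently, the sublocus of $K$ encoding non-$\mathbb{F}_0$ Hirzebruch surfaces has complex codimension at least one, hence real codimension at least two. By transversality I would perturb $\gamma$ rel endpoints to a smooth path $\gamma'$ entirely contained in the open $\mathbb{F}_0$-locus of $K$, exactly as in the Hopf argument for the type IIa stratum.

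Along $\gamma'$ every fiber is biholomorphic to $\mathbb{F}_0$. Since $H^1(\mathbb{F}_0,T_{\mathbb{F}_0})=0$, the Kodaira--Spencer class of this family vanishes identically; by rigidity the family is smoothly isotrivial, i.e.\ there exists a smooth path $t\mapsto\Psi_t\in\text{Diff}^0(X)$ with $\Psi_0=\text{Id}$ and $J(t)=J_0\cdot\Psi_t^{-1}$ for all $t$. Evaluating at $t=1$ gives $J_0\cdot(\Psi_1^{-1}\phi^{-1})=J_0$, so $\alpha:=\Psi_1^{-1}\phi^{-1}$ lies in $\text{Aut}(X_0)=\text{Aut}(\mathbb{F}_0)$ and $\phi$ is $C^\infty$-isotopic to $\alpha^{-1}$. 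Now $\text{Aut}(\mathbb{F}_0)\cong(\text{PGL}_2(\mathbb{C})\times\text{PGL}_2(\mathbb{C}))\rtimes\mathbb{Z}/2$ has exactly two connected components: the identity component, which is a connected complex Lie group and hence contained in $\text{Diff}^0(X)$, and the coset containing the swap $g$. Therefore $\alpha^{-1}$, and hence $\phi$, is smoothly isotopic either to the identity or to $g$, as required.

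The main obstacle is the transversality step: one must verify that the non-$\mathbb{F}_0$-locus is a proper analytic subset of $K$ after the gluing of the (possibly singular) Kuranishi pieces, not just in each chart separately, so that the real codimension $\geq 2$ condition actually holds globally on $K$ and a generic path can miss it. Once this is in place, the isotrivialization step from $H^1(\mathbb{F}_0,T)=0$ and the identification of the connected components of $\text{Aut}(\mathbb{F}_0)$ are routine.
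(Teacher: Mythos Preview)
Your proposal is correct and follows essentially the same approach as the paper's proof: choose $J_0$ encoding $\mathbb{F}_0$, connect $J_0$ to $J_0\cdot\phi$ by a path in a finite-dimensional $K$ via \cite[Theorem 7.2]{circle}, use transversality to push the path into the open $\mathbb{F}_0$-locus, trivialize the resulting family to produce a biholomorphism isotopic to the identity, and conclude by inspecting $\pi_0(\text{Aut}(\mathbb{F}_0))$. The only cosmetic difference is that the paper invokes Fischer--Grauert for the trivialization step where you invoke rigidity via $H^1(\mathbb{F}_0,T_{\mathbb{F}_0})=0$; these amount to the same thing here, and your description of the non-$\mathbb{F}_0$ locus as a proper analytic subset is in fact more accurate than the paper's ``submanifold of codimension~$1$'' (the determinantal cones $T_k$ are singular for $a\geq 2$).
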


\begin{proof}
Let $J_0$ represent $\mathbb P^1\times\mathbb P^1$.
Assume that $J_0\cdot \phi$ belongs to the same connected component as $J_0$. Then there exists a $c$-homotopy of Hirzebruch surfaces $\pi : \mathcal X\to [0,1]$ with endpoints $X_0$ and $X_{J_0\cdot \phi}$: just take the tautological family above a smooth path in $\I$ joining $J_0$ to $J_0\cdot f$. By \cite[Theorem 8.1]{circle}, there exists an analytic space $K$ encoding the complex structures on a neighborhood of the path and obtained by gluing together a finite number of Kuranishi spaces of Hirzebruch surfaces (up to taking the product with some vector space) such that the family $\pi$ maps onto a smooth path into $K$. Using the description of the Kuranishi spaces of Hirzebruch surfaces in \cite[p.21]{Cat} (see also Example \ref{Hirzebruchbis}), it is easy to check that 
\begin{enumerate}
\item[(i)] $K$ is a manifold.
\item[(ii)] The points of $K$ encoding $\mathbb F_{2a}$ for $a>0$ belongs to a submanifold of complex codimension at least $1$.
\end{enumerate}
Hence, we may replace the initial path defining the $c$-homotopy with a new path and a thus a new $c$-homotopy with same endpoints and such that all surfaces along this path are biholomorphic to $\mathbb P^1\times\mathbb P^1$. By Fischer-Grauert's Theorem (see \cite{PS} for the version we use), such a deformation is locally trivial, hence trivial since the base is an interval, i.e. there exists a smooth isotopy of biholomorphisms
\begin{equation}
\label{isotopy}
\psi_t\ :\ \mathbb P^1\times\mathbb P^1\to \pi^{-1}(t)\qquad (t\in [0,1])
\end{equation}
In particular, $\psi_1\circ\psi_0^{-1}$
induces a biholomorphism between $X_0$ and $X_{J_0\cdot\phi}$, which is smoothly isotopic to the identity. Composing its inverse with $\phi$, this gives an automorphism of $X_0$, that is of $\mathbb P^1\times\mathbb P^1$, which corresponds to the same element of the mapping class group as $\phi$. Comparing with the automorphism group of $\mathbb P^1\times\mathbb P^1$ yields the result.
\end{proof}
\noindent and
\begin{corollary}
\label{Hirzebruchcc2}
The map {\rm \eqref{mccc}} is surjective with kernel $\{[Id], [g]\}$.
\end{corollary}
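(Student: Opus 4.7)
The plan is to follow exactly the template of Corollary \ref{Hopfcc2}, with Lemma \ref{Hirzebruchcc} playing the role of Lemma \ref{Hopfcc}. Without loss of generality I normalize by choosing the fixed component $\mathcal I_0$ appearing in \eqref{mccc} to be the one containing a structure $J_0$ that encodes $\mathbb{P}^1\times\mathbb{P}^1$; then \eqref{mccc} is the orbit map $[\phi]\mapsto [\mathcal I_0\cdot\phi]$ of the induced action of $\mathcal{MC}(X)$ on $\pi_0(\mathcal I)$, and the ``kernel'' of the statement is really the stabilizer of the basepoint $[\mathcal I_0]$ (which is a genuine subgroup of $\mathcal{MC}(X)$ since the stabilizer of a group action is a subgroup).

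For surjectivity, I would just combine the two facts already recalled at the start of Example \ref{Hirzebruch}: by \cite{F-Q}, every complex surface diffeomorphic to $\mathbb{S}^2\times\mathbb{S}^2$ is biholomorphic to some even Hirzebruch surface $\mathbb F_{2a}$, and all Hirzebruch surfaces are $c$-homotopic. Together, these show that $\mathcal{MC}(X)$ acts transitively on $\pi_0(\mathcal I)$, so the orbit map \eqref{mccc} is surjective.

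For the kernel, the inclusion $\{[Id],[g]\}\subseteq \ker$ is immediate: $Id$ obviously preserves $\mathcal I_0$, while $g(x,y)=(y,x)$ is a biholomorphism of $(\mathbb{P}^1\times\mathbb{P}^1,J_0)$, so $J_0\cdot g = J_0\in \mathcal I_0$, which says exactly that $g$ stabilizes $\mathcal I_0$. Conversely, if $[\phi]$ lies in the kernel then $J_0\cdot\phi\in\mathcal I_0$, i.e.\ $J_0\cdot\phi$ and $J_0$ are in the same connected component of $\mathcal I$; this is precisely the hypothesis of Lemma \ref{Hirzebruchcc}, which then forces $\phi$ to be $C^\infty$-isotopic to either $g$ or the identity, so that $[\phi]\in\{[Id],[g]\}$.

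There is no serious technical obstacle: all the deformation-theoretic work (analyticity of the glued Kuranishi space from \cite{circle}, transversality to the codimension-one locus encoding $\mathbb F_{2a}$ with $a>0$, and the appeal to Fischer--Grauert) is absorbed into Lemma \ref{Hirzebruchcc}. The only minor point to verify for the statement to have content is that $[Id]\neq [g]$ in $\mathcal{MC}(X)$, which follows from the remark in Example \ref{Hirzebruch} that $g$ acts non-trivially on $H_2(X,\mathbb Z)$ by swapping the two factor classes $[\mathbb S^2\times\{*\}]$ and $[\{*\}\times\mathbb S^2]$, hence cannot be smoothly isotopic to $Id$.
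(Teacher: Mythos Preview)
Your proof is correct and follows essentially the same approach as the paper's own proof: surjectivity from the $c$-homotopy of all even Hirzebruch surfaces, and the kernel from Lemma~\ref{Hirzebruchcc}. You simply spell out more than the paper does---in particular the inclusion $\{[Id],[g]\}\subseteq\ker$ via $g$ being a biholomorphism of $\mathbb{P}^1\times\mathbb{P}^1$, and the nontriviality $[g]\neq[Id]$ from the action on $H_2$---whereas the paper leaves these implicit.
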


\begin{proof}
Since all Hirzebruch surfaces are $c$-homotopic, \eqref{mccc} is surjective. Lemma \ref{Hirzebruchcc} gives the kernel.
\end{proof}

Now, fix a connected component $\mathcal I_0$. We want to describe it more precisely. Observe that $g$ corresponds to an automorphism of $\mathbb P^1\times\mathbb P^1$, but not of the other Hirzebruch surfaces since every automorphism of $\mathbb F_{2a}$ is isotopic to the identity for $a>0$. Recall that the dimension of the group of automorphism of $\mathbb F_{2a}$ is $2a+5$ for $a>0$, \cite[p.44]{MK}. This implies
\begin{lemma}
\label{gaction}
We have:
\begin{enumerate}
\item[(i)] The subset $\mathcal I_0(\mathbb F_0)$ of $\mathcal I_0$ consisting of structures biholomorphic to $\mathbb P^1\times\mathbb P^1$ is open and connected.
\item[(ii)] The closed set $\mathcal I_0\setminus \mathcal I_0(\mathbb F_0)$ has exactly two connected components.
\item[(iii)] The diffeomorphism $g$ acts on $\mathcal I_0$ by fixing globally $\mathcal I_0(\mathbb F_0)$; and by exchanging the two components of $\mathcal I_0\setminus \mathcal I_0(\mathbb F_0)$.
\item[(iv)] Fix a connected component $\mathcal I_1$ of $\mathcal I_0\setminus \mathcal I_0(\mathbb F_0)$. Then the set of points $\mathcal I_2$ encoding $\mathbb F_2$ in $\mathcal I_1$ is open and connected and its complement is connected.
\item[(v)] By induction, for $a>1$, the set of points $\mathcal I_a$ encoding $\mathbb F_{2a}$ in $\mathcal I_{a-1}$ is open and connected and its complement is connected.
\end{enumerate}
\end{lemma}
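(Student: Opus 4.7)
The plan is to leverage the explicit Kuranishi description of Hirzebruch surfaces from \cite[p.21]{Cat}: $\mathbb{F}_0$ is rigid, and for $a\ge 1$ the Kuranishi space of $\mathbb{F}_{2a}$ is smooth of complex dimension $2a-1$, containing each $\mathbb{F}_{2b}$ with $b\le a$ as a connected, irreducible substratum of complex codimension $2b-1$. Combined with transversality for $1$-dimensional paths (via Proposition \ref{countable1}(ii)), the $\text{Diff}^0(X)$-action on each stratum, and a homological invariant carried by the ruling, this will govern all the connectedness claims.

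\emph{Part (i).} Rigidity of $\mathbb P^1\times\mathbb P^1$ shows that every Kuranishi neighbourhood of an $\mathbb F_0$ point is contained in $\mathcal I_0(\mathbb F_0)$, giving openness. For connectedness, join two $\mathbb F_0$ points by a smooth path in $\mathcal I_0$ and perturb it off the higher strata. The strata $\mathcal I_0(\mathbb F_{2a})$ with $a\ge 2$ have real codimension $\ge 6$ and are avoided by a generic real-$1$-dimensional curve; the $\mathbb F_2$ stratum has real codimension $2$, but in the local Kuranishi model $\mathbb C\times L_0$ it is $\{0\}\times L_0$, which any transverse path can be deflected around since $\mathbb C\setminus\{0\}$ is path-connected. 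Patching these local perturbations across the compact image of the path yields a path entirely in $\mathcal I_0(\mathbb F_0)$.

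\emph{Parts (ii)--(iii).} Each $J\in \mathcal I_0\setminus \mathcal I_0(\mathbb F_0)$ is an $\mathbb F_{2a}$ with $a\ge 1$, carrying a canonical $\mathbb P^1$-bundle ruling. The class of a generic fibre in $H_2(X,\mathbb Z)=\mathbb Z\alpha\oplus\mathbb Z\beta$ is an effective class of self-intersection $0$ represented by an embedded sphere, hence equal to $\alpha$ or $\beta$ once the orientation is fixed. This fibre class is locally constant on $\mathcal I_0\setminus \mathcal I_0(\mathbb F_0)$ (the ruling persists through every Hirzebruch degeneration short of $\mathbb F_0$) and is preserved by $\text{Diff}^0(X)$ (which acts trivially on $H_2$), so there are at least two components. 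The swap $g$ exchanges $\alpha$ and $\beta$, hence exchanges the two components, while it fixes $\mathcal I_0(\mathbb F_0)$ set-wise since $g$ is an automorphism of $\mathbb P^1\times\mathbb P^1$. To see there are exactly two components, each fibre class is realised by a connected set: in every Kuranishi space of $\mathbb F_{2a}$, the $\mathbb F_2$-stratum is open dense in the non-$\mathbb F_0$ locus (codimension $1$ versus codimension $\ge 3$ for $\mathbb F_{\ge 4}$), so every non-$\mathbb F_0$ structure is adherent to a connected family of $\mathbb F_2$ structures with the same fibre class; and all $\mathbb F_2$ structures sharing a fibre class form a single $\text{Diff}^0(X)$-orbit, using that $\text{Aut}(\mathbb F_2)$ is connected and that a mapping class preserving the fibre class and the orientation acts trivially on $H_2$.

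\emph{Parts (iv)--(v).} Proceed by induction, peeling off the lowest-type stratum at each step. Openness of the $\mathbb F_{2a}$-stratum inside its ambient connected piece is immediate from the Kuranishi model (the non-$\mathbb F_{<2a}$ locus locally reduces to $\{0\}\times L_0$), and connectedness repeats the transversality argument of Part (i), since $\mathbb F_{\ge 2(a+1)}$ has real codimension $\ge 2$ inside the ambient piece and so is avoided by a generic $1$-dimensional path. The main obstacle is the connectedness of the complement: transversality cannot push a path out of a closed higher-Hirzebruch subset, so one must construct paths staying inside it. The key observation is that in the Kuranishi space of each $\mathbb F_{2b}$ with $b\ge a+1$, the $\mathbb F_{\ge 2(a+1)}$-substratum is a positive-dimensional connected subvariety (by \cite[p.21]{Cat}), providing local deformations that remain inside $\mathbb F_{\ge 2(a+1)}$; composing these with the $\text{Diff}^0$-orbits of $\mathbb F_{2(a+1)}$-structures (connected by the inductive step applied at the next level) glues them into a global path. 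The whole strategy hinges on (a) the irreducibility/connectedness of each Kuranishi substratum from \cite{Cat}, and (b) the identification of $\text{Diff}^0$-orbits within each fixed Hirzebruch type once the relevant homological invariants are specified.
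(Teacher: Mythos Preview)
Your transversality treatment of parts (i), (iv) and (v) follows the same line as the paper, which also reduces everything to the stratified description of the Kuranishi spaces from \cite{Cat} and then invokes generic-path avoidance; that part is fine.

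The fibre-class invariant you introduce for (ii)--(iii) is a pleasant way to see that there are \emph{at least} two components and that $g$ swaps them, but your argument for \emph{at most} two components has a genuine gap. You write that two $\mathbb F_2$ structures sharing a fibre class lie in a single $\text{Diff}^0(X)$-orbit ``using that $\text{Aut}(\mathbb F_2)$ is connected and that a mapping class preserving the fibre class and the orientation acts trivially on $H_2$.'' Acting trivially on $H_2(\mathbb S^2\times\mathbb S^2)$ does \emph{not} by itself force a diffeomorphism to be smoothly isotopic to the identity; indeed the paper explicitly notes (footnote~\ref{Hatcher} and Remark~\ref{CC2}) that the mapping class group of $\mathbb S^2\times\mathbb S^2$ is unknown and possibly infinite. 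So the implication you need is an open problem, not a fact.

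The paper closes this gap differently: it reruns the argument of Lemma~\ref{Hirzebruchcc} inside $\mathcal I_0\setminus\mathcal I_0(\mathbb F_0)$. Starting from $J_0$ encoding $\mathbb F_2$ and any path in $\mathcal I_0\setminus\mathcal I_0(\mathbb F_0)$ to $J_0\cdot\phi$, transversality lets one replace it by a path entirely through $\mathbb F_2$-structures; Fischer--Grauert then trivialises the family, so $\phi$ differs from an element of $\text{Aut}(\mathbb F_2)$ by an isotopy, hence $[\phi]=[Id]$. Combined with Corollary~\ref{Hirzebruchcc2} this pins down exactly two components. Your fibre-class bookkeeping can be salvaged the same way: once you invoke Corollary~\ref{Hirzebruchcc2} to conclude that any $\phi$ preserving $\mathcal I_0$ is isotopic to $Id$ or to $g$, the fact that $g$ swaps fibre classes forces $[\phi]=[Id]$ whenever the fibre class is preserved. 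But you must cite that input; the homological statement alone is not enough.
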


\begin{proof}
Observe that $\mathcal I_0(\mathbb F_0)$ is equal to $\mathcal I_0(7)$, recall \eqref{I(k)}. Hence it is open. Also we have already observed in the proof of Lemma \ref{Hirzebruchcc} that two $c$-homotopic structures both encoding $\mathbb P^1\times\mathbb P^1$ are $c$-homotopic through a path all of whose points encode $\mathbb P^1\times\mathbb P^1$. This proves (i).
\vspace{5pt}\\
To prove (ii) and (iii), we need a variation of Lemma \ref{Hirzebruchcc}. Let $J_0$ represent $\mathbb F_2$. Call $\mathcal I_1$ the connected component of $J_0$ in $\mathcal I_0\setminus \mathcal I_0(\mathbb F_0)$.
Assume that $J_0\cdot\phi$ belongs to $\mathcal I_1$. Then there exists a smooth family of Hirzebruch surfaces $\pi : \mathcal X\to [0,1]$ with endpoints $X_0$ and $X_{J_0\cdot\phi}$ and all of whose point are distinct from $\mathbb P^1\times\mathbb P^1$. Using Theorem 8.1 of \cite{circle} and the description of the Kuranishi spaces of Hirzebruch surfaces in \cite{Cat}, p.21 (see also Example \ref{Hirzebruchbis}), it is easy to check that 
we may assume that all surfaces along this path are biholomorphic to $\mathbb F_2$. Arguing as in the proof of Lemma \ref{Hirzebruchcc}, we deduce that $\phi$ must be smoothly isotopic to the identity, since every automorphism of $\mathbb F_2$ has this property. Since we already know that $g$ fixes globally $\mathcal I_0$, this means that $J_0$ and $J_0\cdot g$ belongs to two distinct connected components of $\mathcal I_0\setminus \mathcal I_0(\mathbb F_0)$ in $\mathcal I_0$. 
\vspace{5pt}\\
Assume now that $J_1$ is another point of $\mathcal I_0$ encoding $\mathbb F_2$. Then there exists $\phi\in\text{Diff}^+(X)$ such that $J_1$ equals $J_0\cdot\phi$. By Corollary \ref{Hirzebruchcc2}, $\phi$ is either isotopic to the identity or to $g$. In the first case, $J_1$ belongs to also to $\mathcal I_1$. In the second case, it belongs to $\mathcal I_1\cdot g$. Hence, there are exactly two connected components exchanged by $g$, and items (ii) and (iii) are proved.
\vspace{5pt}\\
Finally, similar arguments prove (iv) and (v).
\end{proof}
\end{example}

In other words, the associated graph of $f$-homotopy has several connected components and each connected component has two branches joined on the vertex corresponding to $\mathbb P^1\times\mathbb P^1$. Finally, each branch has a countable number of vertices, namely one vertex for each value of $a\in\mathbb N$. It has weight $2a+5$, except for $\mathbb F_0$ which has weight $6$. Given any $a>b$, there exists an edge from $a$ to $b$, because it is possible to deform $\mathbb F_{2a}$ onto $\mathbb F_{2b}$, cf. \cite{Cat} or \cite{MK}. In particular, every vertex is attached to a countable number of edges. Similar picture is valid for the odd Hirzebruch surfaces.
 \begin{figure}[H]
 	\centering
 	\includegraphics[width=0.7\textwidth]{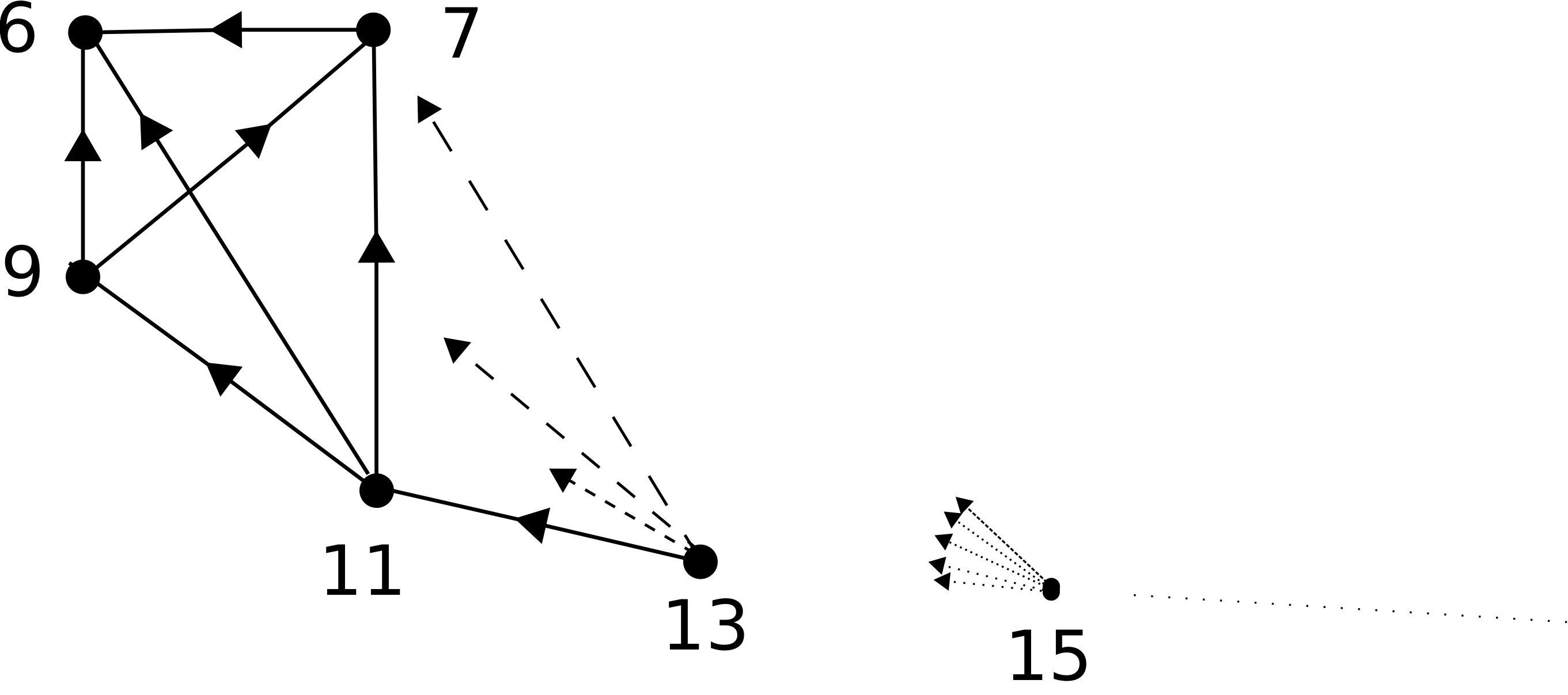}
 	\caption{One of the two branches of a component of the graph of $f$-homotopy for Hirzebruch surfaces.}
 	\label{Hirzebruchfig}
 \end{figure}

\begin{remark}
\label{ell}
Observe that the action of the mapping class group on $\I$ may take strongly different forms, depending on the $C^\infty$-manifold $X$. For $\mathbb S^3\times\mathbb S^1$, Lemma \ref{Hopfcc} shows that it only permutes the connected components of $\I$. For $\mathbb S^2\times \mathbb S^2$, some of the elements of the mapping class group permute the connected components of $\I$ but we also have by Lemma \ref{gaction} an involution which fixes each component of $\I$. Note that this involution is isotopic to an automorphism of $\mathbb P^1\times\mathbb P^1$. The case of elliptic curves shows a different phenomenon. There is a single connected component of complex structures which is fixed by every element of the mapping class group $\text{SL}_2(\mathbb Z)$. Some of them are isotopic to an automorphism of an elliptic curve, for example the multiplication by $i$; but most of them are not, cf. Example \ref{tori}. 
\end{remark}

\begin{remark}
\label{CC2}
Observe that in Examples \ref{Hopf} and \ref{Hirzebruch}, the Riemann moduli stack $\mathcal M(X)$ is connected because of Lemmas \ref{Hopfcc} and \ref{Hirzebruchcc} (cf. Remark \ref{countableexamples}). However, we do not know if $\mathcal T(X)$ has a finite number of connected components, because it is not known if the mapping class group of  $\mathbb S^1\times\mathbb S^3$, respectively $\mathbb S^2\times\mathbb S^2$, is finite or not\footnote{I owe this information to Daniel Ruberman.}. For example, notice that some blow ups of connected sums of $\mathbb P^2$s have infinite mapping class group, see \cite{Ruber}.
\end{remark}

\section{The TG foliated structure of $\mathcal I$.}
\label{TGfoliationsection}
Let $\mathcal I_0$ be a connected component of $\mathcal I$. Assume that for all $J$ in $\mathcal I_0$, we have $h^0(J)$ equal to zero. Then, the action of $\text{Diff}^0(X)$ onto $\mathcal I_0$ is locally free and one would like to conclude that it defines a foliation of $\mathcal I_0$.

This can be made precise as follows. 

\begin{proposition}
 \label{foliationsimple}
 Assume that the function $h^0$ is identically zero on the connected component $\mathcal I_0$. Then, the action of $\text{\rm Diff}^0(X)$ onto $\mathcal I_0$ induces a holomorphic foliation of $\mathcal I_0$ whose leaves are Fr\'echet submanifolds and whose local transverse section at a point $J$ is given by the Kuranishi space of $X_J$.
\end{proposition}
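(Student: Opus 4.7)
The plan is to derive the foliated structure directly from Kuranishi's Theorem~\ref{KuranishiTheorem}, exploiting the fact that the vanishing of $h^0$ forces $L_0 = A^0$ in the Kuranishi decomposition. This will turn the local product $U\cong K_0\times L_0$ into a genuine foliation chart, with $K_0$ as the transverse section and the $A^0$-direction realized as a piece of the $\text{Diff}^0(X)$-orbit via the exponential chart $e$.

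\textbf{Step 1 (local foliation charts).} Fix $J_0\in\mathcal{I}_0$. Since $h^0(J_0)=0$, the decomposition \eqref{A0} forces $L_0 = A^0$ and $H^0(X_0,\Theta_0)=\{0\}$. Theorem~\ref{KuranishiTheorem} then supplies a neighbourhood $U$ of $J_0$ and an analytic isomorphism $\Phi_0 : U \to K_0 \times A^0$ whose inverse is $(J,\xi)\mapsto J\cdot e(\xi)$. Because $e$ is a chart of $\text{Diff}^0(X)$ around the identity, the ``vertical'' slices $\{J\}\times A^0$ correspond, under $\Phi_0^{-1}$, to the local pieces $\{J\cdot e(\xi)\mid \xi\in A^0\}$ of the $\text{Diff}^0(X)$-orbits. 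I would declare these the plaques and $K_0$ the transversal.

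\textbf{Step 2 (compatibility on overlaps).} Given two Kuranishi charts $(U,A^0)$ at $J_0$ and $(U',(A^0)')$ at $J_0'$ with $U\cap U'\neq\emptyset$, for any $J\in U\cap U'$ the two plaques through $J$ are both open neighbourhoods of $J$ inside the single $\text{Diff}^0(X)$-orbit of $J$. In particular they coincide as germs, so the transition map $\Phi_0'\circ\Phi_0^{-1}$ sends plaques to plaques. Composing with the projection onto $K_0'$ therefore factors through $K_0$, yielding a holomorphic transition map between the finite-dimensional analytic transversals $K_0$ and $K_0'$ (analyticity being part of Kuranishi's statement). This gives the foliation atlas.

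\textbf{Step 3 (leaves as Fr\'echet submanifolds).} The leaf through $J$ is the orbit $J\cdot \text{Diff}^0(X)$. Since $h^0(J)=0$, the group $\text{Aut}^0(X_J)$ is trivial, so the stabilizer $\text{Aut}^1(X_J)$ is a discrete subgroup of $\text{Diff}^0(X)$. Locally near any $J'=J\cdot f$ on the leaf, the chart of Step~1 exhibits the leaf as $\{J'\}\times A^0$, which is a Fr\'echet submanifold of $U$. Globally the leaf is the image of the injective immersion $\text{Diff}^0(X)/\text{Aut}^1(X_J)\to \mathcal{I}_0$, and therefore an (immersed) Fr\'echet submanifold whose local transverse section at $J$ is precisely $K_J$.

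\textbf{Main obstacle.} The construction itself is almost a corollary of Kuranishi's theorem once one notices $L_0=A^0$; the only point that deserves care is verifying that the plaques of different charts really glue (Step~2), since a priori one might worry about elements of $\text{Diff}^0(X)\setminus D$ (cf.\ Remark~\ref{localmodeldifference}) acting on $U$ and mixing plaques. The key observation that rules this out is that under $h^0\equiv 0$, Lemma~\ref{Gdecompolemma} simplifies to $T=\{Id\}$, so the local orbit equivalence relation restricted to $U$ is generated exactly by the action of $e(A^0)$, i.e.\ by the plaques themselves.
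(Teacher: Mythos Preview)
Your proof is correct and follows essentially the same route as the paper: both observe that $h^0\equiv 0$ forces $L_0=A^0$ in Theorem~\ref{KuranishiTheorem}, so that the Kuranishi isomorphisms $\Phi_0$ become genuine foliation charts whose plaques are open pieces of $\text{Diff}^0(X)$-orbits and are therefore preserved by changes of charts. The paper adds the observation (citing \cite[Lemma~7.1]{circle}) that the various $A^0$ are all isomorphic as \emph{complex} vector spaces, which is what gives the leaves a uniform Fr\'echet model; your ``main obstacle'' paragraph is slightly off-target---the concern of Remark~\ref{localmodeldifference} pertains to the quotient stack rather than to the foliation itself---but this does not affect the validity of your argument.
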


\begin{remark}
 \label{foldef}
 Be careful that we use the word "foliation" in an extended sense. Firstly the leaves are infinite-dimensional and secondly the transverse sections are singular spaces and are not all isomorphic. We should rather talk of  "lamination" but we prefer to reserve this terminology for foliated spaces transversely modeled onto a continuous space, e.g. a Cantor set. 
\end{remark}

\begin{proof}
The condition that the function $h^0$ is zero on the whole $\mathcal I_0$ implies that, in Theorem \ref{KuranishiTheorem}, we may take $L_0$ to be the full $A^0$. This complex vector space is, as  a real vector space, the space of vector fields $\Sigma(TX)$. Its complex structure a priori depends on the base point $J$, but it is easy to check that all $A^0$ are isomorphic as complex vector spaces, \cite[Lemma 7.1]{circle}. Hence the isomorphisms \eqref{Phi0} form a foliated atlas of $\mathcal I_0$: the plaques representing the local orbits of $\text{Diff}^0(X)$ are preserved by the changes of charts, cf. \cite[\S 6]{circle}. The leaves are Fr\'echet submanifolds modeled onto $A^0$ and at a point $J$, any germ of transverse section is isomorphic to the Kuranishi space of $X_J$.
\end{proof}

In the general case, we think of Kuranishi Theorem \ref{KuranishiTheorem} as describing a foliated structure on $\I$ which is no more transversally modelled onto an analytic space as in Proposition \ref{foliationsimple} but on the Kuranishi stack of section \ref{localaction}. In previous versions of this paper, we formalize this structure as a TG foliation, but the definition we gave is not completely satisfactory. There are several technical issues with it and solving them is unrelated to our results, so we prefer replacing it with the notion of TG foliated structure which is a purely transverse notion. We set
\begin{definition}
	\label{tgfoldef}
By {\it TG foliated structure of $\I$}, we mean a collection of Kuranishi stacks associated to a collection of Kuranishi domains which cover the whole space $\mathcal I$.
\end{definition} 
We think of it as a collection of local transversals to the $\D$-action.

\section{The holonomy groupoid of the TG foliated structure of $\mathcal I$.}
\label{holonomy}

Let $\mathcal F$ be a foliation of some analytic space. We may associate to it a holonomy groupoid as follows (\cite[\S 5.2]{Moerdijk} and \cite{Ha}). We choose a set of local transverse sections. Objects of the groupoid are points of the disjoint union of these local sections. Morphisms are generated by holonomy morphisms, obtained by following the leaves from a transverse section to another one, identifying holonomy morphisms having the same germ. It is an \'etale groupoid, which encodes the leaf space of the foliation.\vspace{5pt}\\
Having proved in Proposition \ref{foliationsimple} that the action of $\text{Diff}^0(X)$ induces a foliation of each connected component of $\mathcal I$ when $h^0$ is equal to zero, and considering in the general case the TG foliated structure of $\I$, we would like to associate to this TG foliated structure a holonomy groupoid. As in the classical case, it should be a presentation of the quotient stack, that is here of the Teichm\"uller stack.\vspace{5pt}\\
However, this is much more involved than in the classical case. The problem is that now the transverse sections are modelled onto groupoids \eqref{tgdef}, so that holonomy morphisms are stacks morphisms between Kuranishi stacks. Hence, if we just follow the same strategy, instead of building a groupoid, we end with a disjoint union of stacks and a set of local stack morphisms. It is certainly possible to turn this collection into a nice categorical structure. However, we will not follow this path since we are interested in obtaining a presentation of the Teichm\"uller stack. The crucial point is to lift holonomy morphisms between Kuranishi stacks to morphisms between Kuranishi spaces. \vspace{5pt}\\
 This lifting process will be done in four steps, in sections \ref{multifoliate}, \ref{Teichmueller} and \ref{main}.\vspace{5pt}\\
  Firstly in section \ref{multifoliate}, we construct partial foliations of $\mathcal I_0$. Partial here means that they are not defined on the whole $\mathcal I_0$ but on an open subset. We take a countable collection of such foliations whose domains of definition cover $\mathcal I_0$. Basically, the transverse structure of these foliations at some point $J$ is modeled onto the Kuranishi space of the corresponding complex manifold $X_J$. However, the jumps in the dimension of the automorphism group cause serious problems here, and we start doing the construction in the neighborhood of a $f$-homotopy class, where equidimensionality is fulfilled. Then we extend it to the whole $\mathcal I_0$, but to achieve that, we are forced to fat the smallest Kuranishi spaces to finish with all transversals of the same dimension. This fatting process was already used in \cite{circle}. \vspace{5pt}\\
  Secondly, from this set of partial foliations, we define regular atlases for this multifoliation and simple holonomy germs as the classical holonomy germs of each partial foliation. The main point is that we allow, under certain circumstancies, composition of holonomy germs coming from two different foliations. The peculiarities of a regular atlas are useful in this process. We encode all the holonomy data related to a regular atlas in a groupoid. This is however not the good groupoid to consider, especially because changing of regular atlas does not produce a Morita equivalent groupoid. All this is done in subsections \ref{regatlas}, \ref{simpleholonomy} and \ref{firstapp}. This preliminary work is essentially notational and technical, but is important to achieve the construction.\vspace{5pt}\\
  Thirdly, building on the previous sections, we construct in subsection \ref{Teichsub} the holonomy group\-oid of the TG foliated structure of $\mathcal I_0$. We call it the {\it Teichm\"uller groupoid}. Its objects are points of a disjoint union of transverse sections of partial foliations covering $\mathcal I_0$. Its morphisms are composition of the simple holonomy germs and of morphisms of type \eqref{Aut0action} on its Kuranishi space, up to an equivalence relation.\vspace{5pt}\\
  Fourthly, and last, we prove that the Teichm\"uller groupoid is an analytic smooth groupoid and a presentation of the Teichm\"uller stack in Theorem \ref{maintheorembis}, which implies Theorem \ref{maintheorem}. Basically there are two points to check. From the one hand, it must be shown that composition of simple holonomy germs and local automorphisms describes the full action of $\text{Diff}^0(X)$ onto $\mathcal I_0$. This is done in Lemma \ref{isoteich}. From the other hand, it must be shown that the source and target maps are smooth morphisms. This is essentially an adaptation of the arguments involved in the proof of Lemma \ref{analyticKurstack}. Analogously, we prove Theorem \ref{mainRbis}, which implies Theorem \ref{mainR}.\vspace{5pt}\\
  Before developing all this construction, we consider in the next section the rigidified case, in which the TG foliated structure comes from a foliation, and the Teichm\"uller groupoid an ordinary holonomy groupoid. This can be seen as a toy model for the general construction and will serve to fixing some notations and conventions.    
 
\section{The rigidified case.}
\label{rigidified}

Recall the

\begin{definition}(see \cite{Catsurvey}, Definition 12).
\label{rig}
A compact complex manifold $X_J$ is {\it rigidified} if $\text{Aut}^1(X_J)$ is equal to the identity.
\end{definition}

In that case, the map
\begin{equation}
\label{injorbit}
f\in\text{Diff}^0(X)\longmapsto J\cdot f\in\mathcal I
\end{equation}
is injective. Moreover, 

\begin{proposition}
\label{rigidconsequences}
Assume that all structures of some connected component $\mathcal I_0$ are rigidified. Then, the action of $\text{\rm Diff}^0(X)$ onto $\mathcal I$ is free and defines a foliation of $\I_0$ whose leaves are Fr\'echet manifolds modelled onto the vector space of smooth sections of $TX$ and with local transversal $K_0$ at $J_0$.
\end{proposition}

\begin{proof}
Freeness is immediate from \eqref{injorbit}. The foliation is that of Proposition \ref{foliationsimple}.
\end{proof}

In the case of Proposition \ref{rigidconsequences}, the Teichm\"uller groupoid is just the standard holonomy groupoid of the foliation. We give now a complete treatment of this case, which serves as a toy model for section \ref{Teichmueller}. We cover $\mathcal I_0$ by a collection $(U_\alpha)_{\alpha\in A}$ of open subsets. We assume that each chart $U_{\alpha}$ is a Kuranishi domain satisfying hypothesis \ref{squarehyp} associated to the following retraction map (the composition is the identity, cf. \eqref{id})
\begin{equation}
\label{id2rigid}
\begin{CD}
K_{\alpha}\hookrightarrow U_{\alpha}@> \Xi_{\alpha}>> K_{\alpha}
\end{CD}
\end{equation}
We denote by $J_\alpha$ the base point of the Kuranishi space $K_\alpha$. Observe that the index set may be assumed to be countable, due to Proposition \ref{countableTeich} and the countability of the involved topologies.\vspace{5pt}\\
Take two points $x\in K_\alpha$ and $y\in K_\beta$ belonging to the same leaf and choose a path of foliated charts joining $x$ to $y$. A holonomy germ from $x$ to $y$ is a germ of analytic isomorphism between the pointed spaces $(K_\alpha,x)$ and $(K_\beta,y)$, which is obtained by identifying along the path of foliated charts points belonging to the same leaf, see \cite[\S 2.1]{Moerdijk} or \cite{CLN}.\vspace{5pt}\\
They can be encoded in a holonomy groupoid \cite[\S 5.2]{Moerdijk} or \cite{Ha} as follows. {\it Objects} are points of the disjoint union of transversals
\begin{equation}
 \label{objectshg}
 \uniondisjointe_{\alpha\in A} K_\alpha.
\end{equation}
We denote by $(x,\alpha)$ a point of $K_\alpha$. To encode the morphisms, we first notice that on each non-empty intersection $U_{\alpha}\cap U_{\beta}$,  there exists a unique isomorphism $\phi_{\alpha,\beta}$ between some open subset $K_{\alpha, \beta}$ of $K_{\alpha}$ and some open subset $K_{\beta, \alpha}$ of $K_{\beta}$. It is obtained by following the leaves of the foliation from $K_{\alpha}$ till meeting $K_{\beta}$ (when this occurs). It satisfies the commutative diagram
\begin{equation}
\label{CDhol}
\begin{CD}
U_{\alpha}\cap U_{\beta} @>Id>>U_{\alpha}\cap U_{\beta}\cr
@V \Xi_{\alpha} V V@ V V \Xi_{\beta}V\cr
K_{\alpha,\beta}@>\simeq>\phi_{\alpha,\beta}> K_{\beta,\alpha}
\end{CD}
\end{equation}

\begin{remark}
\label{reducedrigidified}
It happens that Kuranishi spaces are everywhere non-reduced. Hence a morphism between Kuranishi spaces is not completely determined by its values, the values of its differential must also be prescribed. The previous definition of $\phi_{\alpha,\beta}$ by following the leaves just determines its values. However, since $\Xi_\alpha$ and $\Xi_\beta$ are smooth morphisms by Kuranishi's Theorem \ref{KuranishiTheorem}, the equality $d\phi_{\alpha,\beta}\circ d\Xi_\alpha=d\Xi_{\beta}$ coming from \eqref{CDhol} determines the values of its differential. Thus, even in this non-reduced situation, we have completely and uniquely defined the isomorphism $\phi_{\alpha,\beta}$ making \eqref{CDhol} commutative.
\end{remark}

We now look at the groupoid of germs generated by the $\phi_{\alpha,\beta}$. In other words, we now let $(\alpha_1,\hdots,\alpha_n)$ be a collection of indices such that each $U_{\alpha_i}\cap U_{\alpha_{i+1}}$ is non-empty and define
\begin{equation}
\label{phialpha1n}
\phi_{\alpha_1,\hdots,\alpha_n}:=\phi_{\alpha_{n-1},\alpha_n}\circ\hdots\circ \phi_{\alpha_1,\alpha_2}.
\end{equation}
This composition is defined on some open subset of $K_{\alpha_1}$ that we denote by $K_{\alpha_1,\hdots,\alpha_n}$; and it ranges in some open subset of $K_{\alpha_n}$, that we denote by $K_{\alpha_n,\hdots, \alpha_1}$. Then we represent all holonomy maps as
points  of 
\begin{equation}
\label{morphismholonomyrigidified}
\uniondisjointe_{n\geq 1}\left (\uniondisjointe_{(\alpha_1,\hdots,\alpha_n) \in B_{n}} K_{\alpha_1,\hdots,\alpha_n}\right ).
\end{equation}
Here $(\alpha_1,\hdots,\alpha_n) \in B_{n}$ if each $U_{\alpha_i}\cap U_{\alpha_{i+1}}$ is non-empty. 
A point $x$ in some $K_{\alpha_1,\hdots,\alpha_n}$ represents the germ at $x$ of the map $\phi_{\alpha_1,\hdots,\alpha_n}$, the case $n=1$ encoding the identity 
germs. We denote such a point by the $(n+1)$-uple $(x,\alpha_1,\hdots,\alpha_n)$.\vspace{5pt}\\
Consider the groupoid whose objects  are given in \eqref{objectshg}, and morphisms are given in \eqref{morphismholonomyrigidified}. Observe that both sets are $\mathbb C$-analytic spaces. The source map sends $(x,\alpha_1,\hdots \alpha_n)$ onto $(x,\alpha_1)$ and the target map sends it to $(\phi_{\alpha_1,\hdots,\alpha_n}(x),\alpha_n)$. Both are obviously \'etale analytic maps, since the source map is just the inclusion $K_{\alpha_1,\hdots,\alpha_n}\subset K_{\alpha_1}$ on the component $K_{\alpha_1,\hdots,\alpha_n}$\footnote{This component has no reason to be connected.}; and the target map on the same component is the composition of the isomorphism $\phi_{\alpha_1,\hdots,\alpha_n}$ from $K_{\alpha_1,\hdots,\alpha_n}$ onto $K_{\alpha_n,\hdots,\alpha_1}$ with the inclusion $K_{\alpha_n,\hdots,\alpha_1}\subset K_{\alpha_n}$. Multiplication is given by composition of holonomy germs.\vspace{5pt}\\
However, we are not finished yet. The previous groupoid is not the holonomy groupoid of the foliation. We must still identify identical germs. It may happen for example that such a composition $\phi_{\alpha_1,\hdots,\alpha_n}$ is the identity. So we take the quotient of \eqref{morphismholonomyrigidified} by the following equivalence relation
\begin{equation}
\label{equivrigid}
(x,\alpha)\sim (x',\alpha')\iff
\left\{
\begin{aligned}
&x=x',\ \alpha_1=\alpha'_1,\ \alpha_n=\alpha'_{n'}\cr
\text{and }&\big (\phi_{\alpha_1,\hdots,\alpha_n}\big )_{x'}\equiv \big (\phi_{\alpha'_1,\hdots,\alpha'_{n'}}\big )_x
\end{aligned}
\right .
\end{equation}
that is if they have same source, same target, and are equal as germs.
Hence, the set of morphisms is 
\begin{equation}
\label{morphismholonomyrigid2}
\uniondisjointe_{n\geq 1}\left (\uniondisjointe_{(\alpha_1,\hdots,\alpha_n) \in B_{n}} K_{\alpha_1,\hdots,\alpha_n}\right )\Biggm /\sim
\end{equation}
We set
\begin{definition}
	\label{teichrigid}
	We call {\it Teichm\"uller groupoid} of $\mathcal I_0$ the groupoid whose objects are given by \eqref{objectshg}, whose morphisms are given in \eqref{morphismholonomyrigid2}, and whose source, target maps and multiplication are defined as above.
\end{definition}
We define in the same way the Teichm\"uller groupoid of $V$, an open subset of $\I$.
\begin{proposition}
	\label{groupoidrigid}
	The Teichm\"uller groupoid is an analytic \'etale groupoid. 
\end{proposition}

\begin{proof}
From the above discussion, we just have to prove that \eqref{morphismholonomyrigid2} is still an analytic space and that the projection map from \eqref{morphismholonomyrigidified} onto \eqref{morphismholonomyrigid2} is \'etale.

Observe that two distinct points of the same component $K_{\alpha_1,\hdots,\alpha_n}$ of \eqref{morphismholonomyrigidified} cannot be equivalent. Therefore, the natural projection
map from \eqref{morphismholonomyrigidified} onto \eqref{morphismholonomyrigid2} is \'etale and we just have to show that \eqref{morphismholonomyrigid2} is Hausdorff to finish with the proof.

This comes from a standard argument, cf. \cite[prop. 3.2]{Brunella}. Consider two equivalent convergent sequences $(x_p,\alpha_p)$ and $(x_p, \alpha'_p)$. We may assume that all $\alpha_p$, resp. $\alpha'_p$, are the same, say $\alpha$, resp. $\alpha'$. Assume that $(x_p)$ converges to $x$. Then this means that $\big (\phi^{-1}_{\alpha_1,\hdots,\alpha_n}\circ \phi_{\alpha'_1,\hdots,\alpha'_n}\big )_{x_p}\equiv Id_{x_p}$, i.e. the germ of this morphism is the identity at every point $x_p$. By analyticity, this implies that it is also the identity at the limit point $x$.
\end{proof}

\begin{remark}
\label{moritaequivrigid}
The construction above depends on a choice of a foliated atlas. However, it is easy to show that it is independent of this choice up to Morita equivalence. This can of course be deduced from general arguments, since it represents the stack $\mathcal T(X,\I_0)$, which does not depend on a foliated atlas. It can also be proved directly as follows. Start with a foliated atlas and construct the associated Teichm\"uller groupoid. Take a finer foliated atlas. Then the associated Teichm\"uller groupoid is just the localization of the first one over the new atlas, hence both are weakly equivalent \cite{Ha}. Start now with two different foliated atlases and their associated Teichm\"uller groupoid. Since the union of the atlases is a common refinement of both of them, the two groupoids are Morita equivalent.
\end{remark}
 
\begin{remark}
\label{Aut10bis}
Assume that for all structures $J$ in $\mathcal I_0$, we have $\text{Aut}^0(X_J)$ equal to the identity. Then Proposition \ref{foliationsimple} still applies and the action of $\text{Diff}^0(X)$ still defines a foliation of $\mathcal I_0$. So we can still define a holonomy groupoid as above. Morover the geometric quotient of the Teichm\"uller stack equals the leaf space, that is the geometric quotient of this holonomy groupoid. Nevertheless, they may be different as {\it stacks}, because there may exist a non trivial element in $\text{Aut}^1(X_J)$ that fixes $\mathcal I_0$. Such an element is encoded in the Teichm\"uller groupoid we construct in section \ref{Teichmueller} but not in the holonomy groupoid of Definition \ref{teichrigid}, cf. Remark \ref{subtle}.\vspace{3pt}\\
For many compact complex manifolds $X_0$, there is no difference between $\text{Aut}^0(X_0)$ and $\text{Aut}^1(X_0)$, cf. \cite{Catsurvey}. We gave an example of $X_0$ with $\Azero$ and $\Aun$ distinct in \cite{Aut1paper}. The dimension of $\Azero$ is positive so this leads to the following problem.

\begin{problem}
Find a compact complex manifold $X_0$ with $\text{Aut}^0(X_0)$ being reduced to the identity but which is not rigidified.
\end{problem}

If $X_0$ is K\"ahler, then a result of Liebermann implies that $\text{Aut}^0(X_0)$ has finite index in $\text{Aut}^1(X_0)$\footnote{I owe this information to S. Cantat.}. In the non-K\"ahler case, however, there should even  exist examples with infinite "complex mapping class group"  $\text{Aut}^1(X_0)/\text{Aut}^0(X_0)$.
\end{remark}

\section{The set of partial foliated structures of $\mathcal I$.}
\label{multifoliate}

In this section, we associate to the TG foliated structure of a connected component $\mathcal I_0$ of $\mathcal I$ a collection of standard foliations of open sets of $\mathcal I_0$ covering it. In subsection \ref{simpleholonomy}, we will associate to these partial foliations their holonomy germs. This is a crucial step in defining the morphisms of the Teichm\"uller groupoid. The main problem here is that the dimension of the Kuranishi spaces may vary inside $\mathcal I_0$. To overcome this difficulty, we proceed in two steps. It turns out that the dimension we have really to care about in this problem is the dimension of the automorphism group. Hence we first work in the neighborhood of a $f$-homotopy class, so that we may assume equidimension of the automorphism groups involved in the choice of foliated atlases. Then, we treat the general case. We have to fat the Kuranishi spaces with small automorphism group, following a process already used in \cite{circle}. This supposes the function $h^0$ to be bounded on $\mathcal I_0$. 

\subsection{The set of partial foliated structures of a neighborhood of a $f$-homotopy class.}
\label{multifoliatefclass}
Let $\mathcal F$ be a $f$-homotopy class in $\mathcal I$. Let $V$ be a connected neighborhood of $\mathcal F$ in $\mathcal I_0$. Let $G(\Sigma(TX))$ be the grassmannian of closed vector subspaces of $\Sigma (TX)$ of codimension $h^0(\mathcal F)$. For each $L\in G(\Sigma (TX))$,
define
\begin{equation}
\label{FL}
\mathcal F_L=\{J\in\mathcal F\mid L\oplus \text{Re }H^0(X_J,\Theta_J)=\Sigma(TX)\}.
\end{equation}

\begin{definition}
\label{Fadmissible}
We say that $L$ is {\it $\mathcal F$-admissible} if $\mathcal F_L$ is not empty.
\end{definition}

Assume that $L$ is $\mathcal F$-admissible and let $J_0\in\mathcal F_L$. Then, using the isomorphism 
\begin{equation}
\label{real}
\xi\in A^0\longmapsto \xi+\bar\xi\in \Sigma (TX)
\end{equation}
(where $A^0$ is the space of $(1,0)$-vectors for the structure $J_0$), we see that the choice of a $\mathcal F$-admissible $L$ is equivalent to the choice of a closed subspace $L_0$ of $A^0$ satisfying \eqref{A0} and 
\begin{equation}
\label{L0L}
\text{Re }L_0=L.
\end{equation}
In the sequel, we will denote by the same symbol $L$ a closed subspace of $A^0$ and its real part in $\Sigma (TX)$. No confusion should arise from this abuse of notation. Observe that all such $L$ are complex isomorphic, cf. \cite[Lemma 7.1]{circle}.
\vspace{5pt}\\
So, once chosen such an $L$, we may apply Theorem \ref{KuranishiTheorem} at $J_0$ with $L$. 
We define $V_L$ as the maximal open subset of $V$ covered by Kuranishi domains modelled on $L$ and based at points of $\mathcal F_L$. We can interpretate it as follows. Theorem \ref{KuranishiTheorem} endows each Kuranishi domain with a trivial local foliation by copies of $L$ and leaf space $K_0$. 
\vspace{5pt}\\
Now, let us put this interpretation in a global setting. It tells us that we may cover $V_L$ by Kuranishi domains modelled on the same $L$.
Hence $L$ defines a foliation of $V_L$ by leaves locally isomorphic to a neighborhood of $0$ in $L$, see \cite[Theorem 7.2]{circle}\footnote{The assumption of compacity in this Theorem is only used to {\it prove} that there exists a common $L$ modelling all the Kuranishi domains. Since we assume the existence of such a common $L$, the proof applies.}.

\begin{definition}
\label{Lfoliation}
We call this foliation the {\it $L$-foliation of $V$} (even if it is only defined on $V_L$).
\end{definition}

In the case where $V_L$ is equal to $V$,  which is equivalent to saying that $L$ is a common complement to all $H^0(X_J,\Theta_J)$ for $J\in \mathcal F$, then we obtain a global foliation of $V$.
\vspace{5pt}\\
Nevertheless, it is not possible in general to assume this hypothesis. Hence we shall replace this foliated structure by a collection of {\it partial foliations} encoded in a groupoid.
\begin{definition}
\label{coveringfamily}
A set $\mathcal L$ of $\mathcal F$-admissible elements of $G(\Sigma (TX))$ such that
\begin{equation}
\label{CF}
\union_{L\in\mathcal L} V_L=V.
\end{equation}
is called {\it a covering family} of $V$.
\end{definition}

Choose $\mathcal L$ a covering family of $\mathcal F$. Observe that we may assume $\mathcal L$ to be countable by Proposition \ref{countable1}. To $\mathcal L$ is associated {\it a covering set of partial foliations} of $V$, defined as the set of all $L$-foliations of $V$ for $L\in\mathcal L$. It is useful to encode it in a groupoid as follows.
\vspace{5pt}\\
For each $L\in\mathcal L$, choose an atlas 
\begin{equation}
\label{atlas1}
\mathcal U_L=(U_{\alpha})_{\alpha\in A_L}
\end{equation} 
of $V_L$ by $L$-foliated charts satisfying hypothesis \ref{squarehyp}. Define
\begin{equation}
\label{A}
A=\uniondisjointe_{L\in\mathcal L}A_L\qquad\text{ and }\qquad \mathcal U=(\mathcal U_L)_{L\in\mathcal L}
\end{equation}

Once again, we may assume that $A$ is countable, due to the countability of the involved topologies. Then define the groupoid $G_{\mathcal U}$ as follows. 
{\it Objects} are points of the disjoint union 
\begin{equation}
\label{objects}
\uniondisjointe _{\alpha\in A} U_{\alpha}
\end{equation}
hence are encoded by couples $(x,\alpha)$.
\vspace{5pt}\\
We insist on seeing each $U_{\alpha}$ as a $L$-foliated Fr\'echet space. We use the notation
\begin{equation}
\label{Lalpha}
L\in\alpha
\end{equation}
to denote the vector space $L$ associated to $\alpha$. In section \ref{Teichmueller}, we will enlarge our index set $A$ and the interest of this strange notation should be clarified. Set now
\begin{equation}
\label{B}
B=\uniondisjointe_{L\in\mathcal L} B_L=\uniondisjointe_{L\in\mathcal L}\{(\alpha,\beta)\in A^2\mid \alpha\not = \beta,\ L\in\alpha\text{ and }L\in\beta\}.
\end{equation}
{\it Morphisms} are
points
\begin{equation}
\label{morphisms}
\uniondisjointe_{\alpha\in A}U_\alpha\uniondisjointe _{(\alpha,\beta)\in B} U_{\alpha}\cap U_{\beta}
\end{equation}
encoded by triples $(x,\alpha,\beta)$.
\vspace{5pt}\\
Once again, we insist on seeing each $U_{\alpha}\cap U_{\beta}$ as a $L$-foliated Fr\'echet space. Note that there is no morphism between a point in a $L$-foliated chart and the same point in a $L'$-foliated chart. 

\subsection{The general case.}
\label{multifoliategeneral}
We now deal with the definition of a covering set of partial foliations and its encoding in a groupoid for all points of $\mathcal I_0$ with bounded function $h^0$. \vspace{5pt}\\
Let $a\in\mathbb N$. Recall \eqref{I(k)}. Recall that $\mathcal I(a)$ is open. We assume that it is connected, replacing it with a connected component otherwise. Given a closed subspace $L$ of $\Sigma(TX)$ of codimension $a$, define
\begin{equation}
\label{FLbis}
\mathcal F_L=\{J\in\mathcal I_0(a)\mid L\cap\text{Re }H^0(X_J,\Theta_J)=\{0\}\}.
\end{equation}
This is an extension of \eqref{FL}. We may go on with this generalization.

\begin{definition}
We say that $L$ is {\it $a$-admissible} if $\mathcal F_L$ is not empty.
\end{definition}

Analogously to what happens in subsection \ref{multifoliatefclass}, the choice of an $a$-ad\-miss\-ible $L$ is equivalent to the choice of a closed subspace $L_0$ of $A^0$ satisfying 
\begin{equation}
\label{L0bis}
L_0\cap H^0(X_J,\Theta_J)=\{0\}\quad\text{ and }\quad\text{Re }L_0=L.
\end{equation}
As in subsection \ref{multifoliatefclass}, we denote both $L$ and $L_0$ by the same symbol $L$. Although this $L$ is not a complement of $H^0(X_J,\Theta_J)$, we may run the proof of Kuranishi's Theorem after adding some finite-dimensional subspace $H_L$ such that
\begin{equation}
\label{L+H}
L\oplus H_L\oplus H^0(X_J,\Theta_J)=A^0.
\end{equation}
\begin{remark}
 \label{Cinfini}
We assume that $H_{L}$ contains only $C^\infty$ elements, so that we may use the same $H_L$ for all Sobolev classes. This is always possible by perturbing a little a basis of $H_L$ since $C^\infty$ diffeomorphisms are dense in $L^2_l$ diffeomorphisms for $l$ big enough.
\end{remark}
We thus obtain an isomorphism between a neighborhood $U$ of $J$ in $\mathcal I$ and a product (cf. \cite[Theorem 7.2]{circle})
\begin{equation}
\label{fatiso}
\begin{CD}
U@ >\Phi:=(\Xi,\Upsilon) >> (K_J\times H_L)\times L.
\end{CD}
\end{equation}
whose inverse is given by
\begin{equation}
\label{fatisoinverse}
(J,\xi,\xi')\in \Phi(U)\cap (K_J\times H_L\times L)\longmapsto (J\cdot e(\xi))\cdot e(\xi')
\end{equation}
Setting
\begin{equation}
\label{K}
K:=\Phi(U)\subset K_J\times H_L
\end{equation}
we obtain a sequence analogous to \eqref{id2rigid}
\begin{equation}
\label{id2bis}
\begin{CD}
K\hookrightarrow U@>\Xi >> K.
\end{CD}
\end{equation}
This is our new definition of Kuranishi domains and charts. We replace hypothesis \eqref{squarehyp} with
\begin{hypothesis}
	\label{squarehyp2}
	The image of $\Phi$ is contained in a product $K\times (W'\cap L)=K_J\times (W'\cap H_L)\times (W'\cap L)$ with $W'\subset W$ an open and connected neighborhood of $0$ in $A^0$.
\end{hypothesis}
Let $\mathcal U$ be a covering of $\mathcal I_0(a)$ by Kuranishi domains satisfying hypothesis \ref{squarehyp2}.
Set $V=\mathcal I_0(a)$.
We define $V_L$ as the maximal open subset of $V$ covered by Kuranishi domains satisfying hypothesis \ref{squarehyp2}, modelled on $L$ and based at points of $\mathcal I_0(a)$. We may then define the sets of objects and morphisms of the groupoid $G_{\mathcal U}$ of partial foliations of $V$ exactly as in subsection \ref{multifoliatefclass}. The structure maps are the obvious ones (cf. the proof of Proposition \ref{groupoidI}).

\begin{remark}
\label{newK}
 Recall that the local transversal section at some point $J_0$ is not always its Kuranishi space $K_0$. It is if and only if $h^0(J_0)$ is equal to $a$. More generally, it is the product of $K_0$ with an open neighborhood of $0$ in $\mathbb C^{a-h^0(J_0)}$.
\end{remark}

\begin{remark}
 \label{CaETI}
 Observe that, if the function $h^0$ is bounded on a connected component $\mathcal I_0$ by some integer $a$, then $\mathcal I_0(a)$ is equal to $\mathcal I_0$.
\end{remark}

\subsection{Properties of the groupoid of partial foliated structures.}
\label{groupoidpfs}

The following Proposition shows that the groupoid of partial foliated structures really describes an intrinsic geometric structure. 
\begin{proposition}
\label{groupoidI}
We have:
\vspace{3pt}\\
I. The groupoid $G_{\mathcal U}$ is a foliated Fr\'echet \'etale groupoid, that is 
\begin{enumerate}
\item[(i)] Both the set of objects and that of morphisms are foliated Fr\'echet manifolds.
\item[(ii)] The source, target, composition, inverse and anchor maps are analytic and respects the foliations.
\item[(iii)] The source and target maps are local foliated isomorphisms.
\end{enumerate}
II. The foliated Fr\'echet groupoid $G_{\mathcal U}$ is independent of $\mathcal U$ up to foliated analytic Morita equivalence. 
\end{proposition}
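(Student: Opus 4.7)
Both parts of the proposition are essentially bookkeeping once the geometry has been set up. The strategy is to decompose the spaces of objects and morphisms into their connected components, observe that every structure map is an open embedding on each component, and invoke two inputs: first, each index $\alpha \in A = \uniondisjointe_{L\in\mathcal L} A_L$ determines a unique $L \in \mathcal L$, so the notation ``$L \in \alpha$'' is unambiguous; second, the intrinsic $L$-foliation of $V_L$ from \cite[Thm.~7.2]{circle} restricts to every chart $U_\alpha$ with $L \in \alpha$ as the $L$-foliation of that chart. Everything else then reduces to open embeddings between foliated Fréchet analytic spaces.

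\textbf{Part I.} The object space $\uniondisjointe_{\alpha\in A} U_\alpha$ is a countable disjoint union of open subsets of $V$, each carrying the $L$-foliation with $L \in \alpha$. The morphism space splits as units $\uniondisjointe_{\alpha} U_\alpha$ plus non-units $\uniondisjointe_{(\alpha,\beta)\in B} U_\alpha\cap U_\beta$; since $(\alpha,\beta)\in B_L$ forces $L \in \alpha$ and $L \in \beta$, each $U_\alpha\cap U_\beta$ is an open subset of $V_L$ and inherits the $L$-foliation unambiguously, which gives (i). The source map sends $(x,\alpha,\beta)$ to $(x,\alpha)$, i.e.\ on the component $U_\alpha\cap U_\beta$ it is the open embedding into $U_\alpha$; identically for the target. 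Open embeddings of $L$-foliated opens are local foliated isomorphisms, so this proves (iii) and already shows the source and target are analytic and foliation-preserving. For composition $(x,\alpha,\beta)\cdot(x,\beta,\gamma)=(x,\alpha,\gamma)$: the middle index $\beta$ determines a single $L$, so $\alpha,\beta,\gamma\in A_L$ and $(\alpha,\gamma)\in B_L$ (or the outcome is the unit at $x$ when $\alpha=\gamma$); the composition acts as the identity on the underlying point, with domain the triple intersection $U_\alpha\cap U_\beta\cap U_\gamma$, hence is analytic and foliation-preserving. The anchor $(s,t)$ is analytic as a product of analytic maps, which completes (ii).

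\textbf{Part II and the main obstacle.} Given two coverings $\mathcal U$ and $\mathcal U'$ of $V$, form the common refinement $\mathcal U \vee \mathcal U'$ obtained by taking, for each $L \in \mathcal L \cup \mathcal L'$, the union of the two atlases (the missing one being empty). The canonical inclusions $G_{\mathcal U} \hookrightarrow G_{\mathcal U \vee \mathcal U'}$ and $G_{\mathcal U'} \hookrightarrow G_{\mathcal U \vee \mathcal U'}$ are the standard refinement/localization equivalences between holonomy-type groupoids of a foliation: they are fully faithful on morphisms and essentially surjective on objects in the analytic sense of \cite{Ha} and \cite[\S 5.4]{Moerdijk}, and each refinement datum is itself built out of open embeddings respecting the $L$-foliations. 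Both arrows are therefore foliated analytic Morita equivalences, so $G_{\mathcal U}$ and $G_{\mathcal U'}$ are Morita equivalent through $G_{\mathcal U \vee \mathcal U'}$. The single non-formal ingredient in the whole argument is the compatibility of the $L$-foliations across overlapping Kuranishi charts; this is already isolated and established in \cite[Thm.~7.2]{circle}, so the real obstacle is quarantined outside this proof, and the remaining work is purely the careful tracking of indices in $A$ and $B$.
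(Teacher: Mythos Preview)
Your proof of Part I is essentially identical to the paper's: both observe that objects and morphisms are disjoint unions of open subsets of $V$ carrying the relevant $L$-foliation, that source and target are the open inclusions $U_\alpha\cap U_\beta\hookrightarrow U_\alpha$ and $U_\alpha\cap U_\beta\hookrightarrow U_\beta$, and that composition is the identity on points with domain the triple intersection $U_\alpha\cap U_\beta\cap U_\gamma$.

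For Part II you take the dual route to the paper. The paper passes to the \emph{intersection} atlas $\mathcal U\cap\mathcal V$ and notes that localizing either $G_{\mathcal U}$ or $G_{\mathcal V}$ over it yields $G_{\mathcal U\cap\mathcal V}$, invoking \cite{Ha}. You instead pass to the \emph{union} atlas and use the full-subgroupoid inclusions $G_{\mathcal U}\hookrightarrow G_{\mathcal U\vee\mathcal U'}\hookleftarrow G_{\mathcal U'}$. Both are standard ways to show cover-independence of a \v{C}ech-type groupoid, and both work here; the paper's version is slightly cleaner because ``localization over a cover is a weak equivalence'' is a ready-made statement, whereas for your inclusions you should say explicitly why they are essentially surjective (every $(x,a')$ with $a'\in A'_L$ is hit by a morphism from some $(x,\alpha)$ with $\alpha\in A_L$, since $\mathcal U_L$ covers $V_L$).

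One caution: your detour through $\mathcal L\cup\mathcal L'$ is unnecessary and, taken literally, problematic. The proposition fixes $\mathcal L$ and varies only the atlases $\mathcal U_L$. If $\mathcal L\neq\mathcal L'$ and some $L'\in\mathcal L'\setminus\mathcal L$, then an object $(x,a')$ in a chart with $L'\in a'$ has no morphism in $G_{\mathcal U\vee\mathcal U'}$ to any object coming from $\mathcal U$, since morphisms only exist within a single $L$; essential surjectivity would fail. Simply take $\mathcal L=\mathcal L'$ and drop that clause.
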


\begin{proof}
This is completely standard, since this groupoid is very close to the Lie groupoid obtained by localization of a smooth manifold over an atlas, see \cite{Ernesto}, \S 7.1.3. Starting with I, then (i) is obvious from \eqref{objects} and \eqref{morphisms}; the source map $\sigma$ and the target map $\tau$ are given by the following foliation preserving inclusions 
\begin{equation}
\label{folinclusions}
\begin{CD}
U_{\alpha}@<\sigma << U_{\alpha}\cap U_{\beta} @>\tau >> U_{\beta}
\end{CD}
\end{equation}
proving (iii) and part of (ii). Composition is given by 
\begin{equation}
\label{composition}
(x,\alpha,\beta)\times (x,\beta,\gamma)\longmapsto (x,\alpha,\gamma)
\end{equation}
provided that
$$
L\in\alpha\cap\beta\cap\gamma
$$
(the notation should be clear from \eqref{Lalpha}). Assume for simplicity that $\alpha$, $\beta$ and $\gamma$ are pairwise distinct.
This is indeed a foliation preserving analytic map from
$$
\{(\phi,\psi)\text{ morphisms of }G_{\mathcal U}\mid \tau (\phi)=\sigma (\psi)\}
$$
that is 
\begin{equation}
\label{compdomain}
\uniondisjointe_{(\alpha,\beta,\beta,\gamma)\in \sqcup {B_L}^2} U_{\alpha}\cap U_{\beta}\cap U_{\gamma}
\end{equation}
onto \eqref{morphisms}. Other cases are treated similarly. This finishes the proof of (ii), hence of I.
\vspace{5pt}\\
As for II, start from choosing two coverings $\mathcal U$ and $\mathcal V$ of $V$.
The crucial point is contained in I: these groupoids are \'etale. From that, it is enough to observe that both the localization of $G_{\mathcal U}$ over $\mathcal V$ and the localization of $G_{\mathcal V}$ over $\mathcal U$ are equal to the groupoid $G_{\mathcal U\cap \mathcal V}$ (see \cite{Ha} for the equivalence with the classical definition of Morita equivalence).
\end{proof}
To finish this section, we note that $G_{\mathcal U}$ encodes all the possible foliations of open sets of $V$ associated to Kuranishi domains. Indeed we have

\begin{proposition}
\label{full}
The full subgroupoid of $G_{\mathcal U}$ obtained by restriction to a fixed $L\in\mathcal L$ is the localization over some atlas $V_L$, hence is Morita equivalent to the largest subdomain of $V$ foliated by $L$.
\end{proposition}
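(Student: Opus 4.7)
My plan is to recognize the restricted groupoid as a localization of the unit groupoid on $V_L$, and then invoke (as in the proof of part II of Proposition \ref{groupoidI}) the general fact that localization over an open cover is a foliated analytic Morita equivalence.

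First, I would spell out the restricted groupoid. Write $A_L \subset A$ for the indices with $L \in \alpha$, and let $G_{\mathcal U}^L$ be the full subgroupoid on $\sqcup_{\alpha \in A_L} U_\alpha$. From \eqref{B} and \eqref{morphisms}, its set of morphisms is precisely
\begin{equation*}
\uniondisjointe_{\alpha \in A_L} U_\alpha \;\;\uniondisjointe_{(\alpha,\beta) \in B_L} U_\alpha \cap U_\beta,
\end{equation*}
with source, target and composition as in \eqref{folinclusions}--\eqref{composition}. Since each $\alpha \in A_L$ comes with an $L$-foliated chart on $U_\alpha$, and since by definition $V_L$ is covered by these Kuranishi domains, the family $\mathcal U_L = (U_\alpha)_{\alpha \in A_L}$ is a foliated atlas of $V_L$.

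Second, I would match this with a localization. Viewing $V_L$ as the unit groupoid $V_L \rightrightarrows V_L$ (a foliated Fr\'echet space whose only morphisms are identities), the localization of this trivial groupoid along the surjective local isomorphism $\sqcup_{\alpha \in A_L} U_\alpha \to V_L$ has, by definition (cf.\ \cite{Ha} or \cite{Ernesto}, \S 7.1.3), objects $\sqcup_{\alpha \in A_L} U_\alpha$ and morphisms given by the fibered product
\begin{equation*}
\Bigl(\uniondisjointe_{\alpha \in A_L} U_\alpha\Bigr) \times_{V_L} \Bigl(\uniondisjointe_{\beta \in A_L} U_\beta\Bigr) \;=\; \uniondisjointe_{\alpha,\beta \in A_L} (U_\alpha \cap U_\beta),
\end{equation*}
with source and target the two projections. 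Separating the diagonal $\alpha = \beta$ from the off-diagonal terms reproduces exactly the morphism space of $G_{\mathcal U}^L$, and the source/target/composition formulas are identical. Thus $G_{\mathcal U}^L$ is literally the localization of $V_L$ over $\mathcal U_L$, both as analytic and as foliated groupoids, because the foliated charts \eqref{charts} on each $U_\alpha$ are compatible with the $L$-foliation of $V_L$.

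Finally, localization of a groupoid over an open cover of its base by a surjective local isomorphism is always a Morita equivalence (this is the content already used at the end of the proof of Proposition \ref{groupoidI}(II)). Applied here in the foliated analytic setting, this yields that $G_{\mathcal U}^L$ is foliated analytically Morita equivalent to the unit groupoid on $V_L$, i.e.\ to $V_L$ itself with its $L$-foliation. The only small point requiring care is to verify compatibility of the foliated structures: on every piece $U_\alpha \cap U_\beta$ with $\alpha, \beta \in A_L$, the two inclusions into $U_\alpha$ and $U_\beta$ are $L$-foliated local isomorphisms (by \eqref{folinclusions} and the construction of $V_L$ as foliated by a common $L$), so the localization inherits the required foliated Fr\'echet \'etale structure. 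I do not expect any genuine obstacle here; the statement is essentially a bookkeeping check that the restricted groupoid matches the standard localization construction.
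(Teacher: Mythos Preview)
Your proposal is correct and matches the paper's approach exactly: the paper gives no explicit proof for this proposition, but the argument is entirely contained in the statement itself (the restricted groupoid \emph{is} a localization of $V_L$ over $\mathcal U_L$, hence Morita equivalent), and you have simply unpacked this in detail. Your identification of the morphism space via the fibered product and your appeal to the localization/Morita argument already used in Proposition~\ref{groupoidI}(II) are precisely what the author leaves implicit.
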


\section{The Teichm\"uller groupoid.}
\label{Teichmueller}

In this section, we construct for the TG foliated structure of $\I_0$ the analogue for the holonomy groupoid. We call it {\it the Teichm\"uller groupoid}. This will be done in several steps. In subsection \ref{regatlas}, we first give a sort of foliated atlas of $\I_0$ with good properties. We call it {\it a regular atlas}. We then define in subsection \ref{simpleholonomy} the holonomy germs associated to the set of partial foliations. In subsection \ref{firstapp}, we encode these simple holonomy morphisms in a groupoid $K_\mathcal U$. This is however not the right analogue for the holonomy groupoid, since it does not take into account the isotropy groups of the transverse structure of the TG foliated structure.  From the regular atlas, we finally build in subsection \ref{Teichsub} the Teichm\"uller groupoid.

\subsection{Regular atlases.}
\label{regatlas}
We need to construct on $V$ an equidimensional atlas from the atlas $\mathcal U$ of $K_\mathcal U$.
Besides, we need this atlas to reflect the partial foliated structure of $\mathcal I_0$ to be able to define properly the holonomy germs. \vspace{5pt}\\
As in section \ref{multifoliate}, we fix $\mathcal L$ and we define \eqref{atlas1} and \eqref{A} as well as $G_{\mathcal U}$. 
\vspace{5pt}\\
We assume that each chart $U_{\alpha}$ is a Kuranishi domain satisfying hypothesis \ref{squarehyp2}, based at $J_\alpha$ and associated to the following retraction map (the composition is the identity, cf. \eqref{id2bis})
\begin{equation}
\label{id2}
\begin{CD}
K_{\alpha}\hookrightarrow U_{\alpha}@> \Xi_{\alpha}>> K_{\alpha}
\end{CD}
\end{equation}
Recall Remark \ref{newK}.\vspace{5pt}\\
The set of holonomy germs of $G_{\mathcal U}$ is constructed from the union of all holonomy groupoids when $L$ varies. But in order to mix these holonomies, we first add some charts with common transversal for different foliations. More precisely, for every couple $(L,L')$ in $\mathcal L^2$ with
\begin{equation}
\label{inter}
V_L\cap V_{L'}\not =\emptyset
\end{equation}
we enlarge the index set $A$ to include new indices $\alpha$ and new charts  
\begin{equation}
\label{atlas2}
\begin{CD}
K_{\alpha}@<\Xi_{\alpha, L} << U_{\alpha,L}\quad\text{ and }\quad U_{\alpha,L'} @>\Xi_{\alpha, L'}>>K_{\alpha}
\end{CD}
\end{equation}
which cover \eqref{inter}. We emphasize that {\it the same} analytic set $K_{\alpha}$ is used as leaf space for both the $L$ and the $L'$-foliations. This is possible due to the uniqueness properties in Kuranishi's Theorem \ref{KuranishiTheorem}.
\vspace{5pt}\\
In the same way, for any value of $n\geq 3$, we enlarge the index set $A$ to include new indices and charts
\begin{equation}
\label{atlas3}
\begin{CD}
U_{\alpha, L_i}@>\Xi_{\alpha, L_i}>>K_{\alpha}
\end{CD}
\end{equation}
for $i=1,\hdots, n$, covering
\begin{equation}
\label{inter2}
V_{L_1}\cap\hdots\cap V_{L_n}\not =\emptyset.
\end{equation}
Once again, we insist on the fact that $K_{\alpha}$ is a {\it common leaf space} for every $L_i$-foliation restricted to $U_{\alpha,L_i}$.
We use the notation
\begin{equation}
\label{Lialpha}
L_i\in\alpha\qquad\text{ for all }\qquad  i=1,\hdots, n
\end{equation}
as a natural extension of \eqref{Lalpha}.
\medskip\\
All new charts are supposed to satisfy \ref{squarehyp2}. We define

\begin{definition}
	\label{regatlasdef}
	We call {\it regular atlas} of $V$ such a foliated atlas $\mathcal U$.
\end{definition}

\begin{remark}
\label{atlases}
It is important to notice that the new covering $\mathcal U$ is constructed from the covering $\mathcal U$ of $G_{\mathcal U}$ but has strictly more charts because of \eqref{atlas3} and \eqref{atlas2}. Moreover, this (extended) covering cannot be used to construct some $G_{\mathcal U}$, since each chart of $G_{\mathcal U}$ has to be explicitely associated to a unique $L\in\mathcal L$. However, to avoid cumbersome notations, we use the same symbol for both coverings.
\end{remark}  

We have now to pay attention to the fact that $K_\alpha$ is no more the Kuranishi space of $J_\alpha$, but its product with some open set in $H_L\simeq\mathbb C^{a-h^0(J_\alpha)}$, cf. \eqref{fatiso}. Hence the groupoid \eqref{tgdef} of subsection \ref{localaction} is not the good one to consider. This can be easily fixed by fatting also the group $\Azero$. Recall \eqref{L+H} and Remark \ref{Cinfini}. 

The following generalization of Lemmas \ref{Gdecompolemma} and \ref{Gdecompolemmaanalytic} is straightforward to prove.

\begin{lemma}
 \label{newGdecompolemma}
We have
\begin{enumerate}
	\item [(i)] If $W'\subset W$ is small enough, then there exist an open and connected neighborhood $T_\alpha$ of the identity in $\text{\rm Aut}^0(X_\alpha)$ and an open and connected neighborhood $D_{\alpha,L}$ of the identity in $\text{\rm Diff}^0(X)$ such that  
	\begin{equation}
	\label{newGdecompo}
	(\xi,\xi',g)\in W'\cap H_L\cap W'\cap L\times T_\alpha\longmapsto g\circ e(\xi)\circ e(\xi')\in D_{\alpha,L}
	\end{equation}
	is an isomorphism. 
	\item[(ii)] Set $\mathcal D_{\alpha,L}=\bigcup_{g\in \text{\rm Aut}^0(X_\alpha)}gD_{\alpha,L}$. Then \eqref{newGdecompo} extends as an isomorphism
	\begin{equation}
	\label{newGdecompoglobal}
	(\xi,\xi',g)\in W'\cap H_L\cap W'\cap L\times \text{\rm Aut}^0(X_\alpha)\longmapsto g\circ e(\xi)\circ e(\xi')\in \mathcal D_{\alpha,L}
	\end{equation}
	\item[(iii)] Both {\rm \eqref{newGdecompo}} and {\rm\eqref{newGdecompoglobal}} extend to analytic isomorphisms of the Sobolev completions. 
\end{enumerate}

\end{lemma}

Now define
\begin{equation}
\label{newG}
G_\alpha:=\{g\circ e(\xi)\mid (g,\xi)\in\text{Aut}^0(X_\alpha)\times (H_L\cap W')\}
\end{equation}

\begin{remark}
	\label{notagroup}
	Be careful that $G_\alpha$ is not a group, just a fatting of $\text{Aut}^0(X_\alpha)$.
\end{remark}
We let $g\in G_\alpha$ act on $K_\alpha$ exactly as in \eqref{Aut0action}, that is
\begin{equation}
 \label{newGaction}
 xg:=\Xi_{\alpha, L}(x\cdot g)
\end{equation}
and form the corresponding groupoid $\mathcal A_{\alpha,L}\rightrightarrows K_\alpha$ as in section \ref{localaction}. Notice that \eqref{newGaction} depends on a choice of $L$.

\subsection{Simple holonomy morphisms.}
\label{simpleholonomy}

In this subsection, we associate to the partial foliations of $\mathcal I_0$ their holonomy germs. The main point is how to mix the holonomies of the different foliations. We refer to section \ref{rigidified} for comparison.\vspace{5pt}\\
We start with a regular atlas $\mathcal U$. On each intersection $U_{\alpha}\cap U_{\beta}$ with
\begin{equation}
\label{inter3}
\alpha\cap\beta\not=\emptyset
\end{equation}
and for every choice of $L_i$ in \eqref{inter3},  we define the holonomy isomorphism $\phi_{\alpha,\beta, L_i}$ between some open subset $K_{\alpha, \beta, L_i}$ of $K_{\alpha}$ and some open subset $K_{\beta, \alpha, L_i}$ of $K_{\beta}$ as in section \ref{rigidified}. Recall the commutative diagram \eqref{CDhol}.
We then look at the groupoid of germs generated by the germs of $\phi_{\alpha,\beta, L}$. In other words, we now let 
\begin{equation}
\label{beta}
\beta=\beta_1,\hdots,\beta_n\quad\text{ and }\quad L=L_1,\hdots, L_n
\end{equation}
be collections of $n$ elements for any value of $n$ and define
\begin{equation}
\label{phialphabetaL}
\phi_{\alpha,\beta, L}:=\phi_{\beta_{n-1},\beta_n, L_n}\circ\hdots\circ \phi_{\alpha,\beta_1, L_1}.
\end{equation}
Here we assume by convention that both $n$ appearing in \eqref{beta} are the same, allowing repetitions if necessary. This composition is defined on some open subset of $K_{\alpha}$ that we still denote by $K_{\alpha,\beta, L}$; and it ranges in some open subset of $K_{\beta_n}$, that we denote by $K_{\bar\beta, \alpha, \bar L}$ where
\begin{equation}
\label{barbeta}
\bar\beta=(\beta_n,\hdots,\beta_1)\quad\text{ and }\quad \bar L=(L_n,\hdots, L_1).
\end{equation}
Note that
\begin{equation}
\label{compatibility}
\phi_{\beta_n, \gamma, L'}\circ\phi_{\alpha, \beta, L}\equiv \phi_{\alpha, \beta,\gamma, L, L'}
\end{equation}
where this composition is defined, and that
\begin{equation}
\label{compatibility2}
\phi_{\bar\beta,\alpha, \bar L}=(\phi_{\alpha,\beta, L})^{-1}.
\end{equation}
We define
\begin{definition}
 \label{simplegerms}
 We call {\it simple holonomy morphisms} of $G_\mathcal U$ the morphisms {\rm \eqref{phialphabetaL}}.
\end{definition}

\subsection{A first approximation of the Teichm\"uller groupoid.}
\label{firstapp}
We may encode the simple holonomy morphisms in a groupoid $K_\mathcal U$ as follows, compare with the construction of the standard holonomy groupoid in section \ref{rigidified}. It is a first approximation of the Teichm\"uller groupoid, but which does not see the automorphism groups. {\it Objects} are points of the disjoint union
\begin{equation}
\label{objectsholonomy}
\uniondisjointe_{\alpha\in A} K_{\alpha}
\end{equation}
hence encoded by couples as in \eqref{objects}. {\it Morphisms} encode germs of holonomy maps. They are defined only between a source object $(x,\alpha)$ and a target object $(y,\gamma)$ such that
\begin{equation}
\label{morphismcompatibility}
y=\phi_{\alpha,\beta, L}(x)
\end{equation}
for some collections $\beta$ (with $\beta_n=\gamma$) and $L$. We have first all identity germs, represented by a copy of \eqref{objectsholonomy} in the set of morphisms. Then, consider the maps \eqref{morphismcompatibility} for which $\beta$ - and then $L$ - has length one. They are encoded as
\begin{equation}
\label{morphismholonomy1}
\uniondisjointe_{(\alpha,\beta, L) \in B} K_{\alpha,\beta, L}.
\end{equation}
To be precise, a point $x$ in some $K_{\alpha,\beta, L}$ represents the germ at $x$ of the map $\phi_{\alpha,\beta, L}$. Here
\begin{equation}
 \label{Bdef}
 (\alpha,\beta,L)\in B\iff L\in\alpha\cap\beta\text{ and }U_{\alpha,L}\cap U_{\beta,L}\not=\emptyset.
\end{equation}
Then we represent all holonomy maps as
points  of 
\begin{equation}
\label{morphismholonomy2}
\uniondisjointe_{n\geq 0}\left (\uniondisjointe_{(\alpha,\beta, L) \in C_{n}} K_{\alpha,\beta, L}\right )
\end{equation}
for
\begin{equation}
\label{Cn}
C_n:=\left\{
\begin{aligned}
&(\alpha,\beta, L)\in A^{n+1}\times (\mathcal L)^{n}\cr
\text{such that }&(\alpha,\beta_1,L_1)\in B,\hdots, (\beta_{n-1},\beta_n,L_n)\in B
\end{aligned}
\right \}.
\end{equation}
As previously, a point $x$ in some $K_{\alpha,\beta, L}$ represents the germ at $x$ of the map $\phi_{\alpha,\beta, L}$, the case $n=0$ encoding the identity 
germs.
\vspace{5pt}\\
However, we are not finished. We must still identify identical germs. So we take the quotient of \eqref{morphismholonomy2} by the following equivalence relation
\begin{equation}
\label{equiv}
(x,\alpha,\beta,L)\sim (x',\alpha',\beta',L')\iff
\left\{
\begin{aligned}
&x=x',\ \alpha=\alpha',\ \beta_n=\beta'_{n'}\cr
&\text{and }\big (\phi_{\alpha',\beta',L'}\big )_{x'}\equiv \big (\phi_{\alpha,\beta,L}\big )_x
\end{aligned}
\right .
\end{equation}
that is if they have same source, same target, and are equal as germs.
Hence, the set of morphisms is 
\begin{equation}
\label{morphismholonomy3}
\uniondisjointe_{n\geq 0}\left (\uniondisjointe_{(\alpha,\beta, L) \in C_{n}} K_{\alpha,\beta, L}\right )\Biggm /\sim
\end{equation}

We have (cf. Proposition \ref{groupoidrigid})
\begin{proposition}
\label{groupoidII}
The groupoid $K_{\mathcal U}$ is an analytic \'etale groupoid. 
\end{proposition}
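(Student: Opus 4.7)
The plan is to mimic, in the multi-foliation setting of subsection \ref{firstapp}, the argument sketched in the rigidified case leading to Proposition \ref{mainrigidified}, keeping track throughout of the extra index $L\in\mathcal L$ recording which partial foliation each elementary germ comes from. The regular atlas of Definition \ref{regatlasdef} is what makes this possible: the same finite-dimensional analytic transversal $K_\alpha$ serves simultaneously for every $L$-foliation with $L\in\alpha$, so the concatenated holonomies \eqref{phialphabetaL} involving several $L_i$'s are well defined and analytic as compositions of the analytic isomorphisms of \eqref{CDhol}.

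First I would verify the analyticity of the two spaces. The set \eqref{objectsholonomy} is a disjoint union of finite-dimensional analytic spaces, hence analytic. The set \eqref{morphismholonomy2} is likewise an analytic space, each component $K_{\alpha,\beta,L}$ being an open subset of $K_\alpha$ on which \eqref{phialphabetaL} is defined. The author has already noted that two distinct points of the same component cannot be equivalent under $\sim$; hence the projection \eqref{morphismholonomy2}$\to$\eqref{morphismholonomy3} is a local homeomorphism. Since the equivalence glues components along the analytic isomorphisms defined by germ identifications (germ equality being an open condition which, once it holds at one point, yields an honest analytic isomorphism between open subsets of two transversals), the construction is the standard étale gluing of analytic spaces (cf.\ \cite[\S 5.2]{Moerdijk}) and equips \eqref{morphismholonomy3} with an analytic structure for which the projection is étale analytic.

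Next I would check the structure maps. On each component of \eqref{morphismholonomy2}, the source reads as the open inclusion $K_{\alpha,\beta,L}\hookrightarrow K_\alpha$ and the target as the analytic isomorphism $\phi_{\alpha,\beta,L}$ followed by the open inclusion $K_{\bar\beta,\alpha,\bar L}\hookrightarrow K_{\beta_n}$; both are therefore étale analytic morphisms. They are constant on $\sim$-classes by the very definition of the equivalence relation, so they descend to étale analytic maps on the quotient. Composition is modelled on the concatenation rule \eqref{compatibility}: on the analytic fibre product of composable arrows over source and target one simply concatenates the index data, an analytic operation. Identities are the $n=0$ stratum of \eqref{morphismholonomy2} and inverses are supplied by \eqref{compatibility2}, while associativity is immediate from the associativity of composition in the pseudogroup generated by the elementary $\phi_{\alpha,\beta,L}$.

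The only delicate point, and the one that must be argued carefully rather than computed, is that the quotient \eqref{morphismholonomy3} will in general fail to be Hausdorff, as recalled in Remark \ref{nonHausdorff}. This is not an obstruction but a feature of holonomy groupoids: since the identifications under $\sim$ are étale and realized by analytic isomorphisms of open subsets of the transversals, the quotient carries a well-defined (possibly non-Hausdorff) analytic structure compatible with the conventions fixed in subsection \ref{TRstacks}, and all the structure maps above remain étale analytic on this quotient. This establishes that $K_{\mathcal U}$ is an analytic étale groupoid, in direct parallel with Definition \ref{teichrigid} and Proposition \ref{mainrigidified}.
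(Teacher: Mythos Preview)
Your proposal is correct and follows essentially the same approach as the paper: both identify the source map as the open inclusion $K_{\alpha,\beta,L}\hookrightarrow K_\alpha$, the target map as $\phi_{\alpha,\beta,L}$ followed by inclusion into $K_{\beta_n}$, and composition as concatenation via \eqref{compatibility}, with the quotient by \eqref{equiv} being \'etale since distinct points of a single component are never identified. Your version is somewhat more detailed (explicitly addressing identities, inverses, and the non-Hausdorff issue), but the substance is the same.
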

However, and contrary to the case of section \ref{rigidified} and Remark \ref{moritaequivrigid}, $K_{\mathcal U}$ and $K_{\mathcal V}$ {\it are not always Morita equivalent}. This is due to the fact that we mix holonomies of different foliations. Indeed, this is not the good holonomy groupoid to consider, because it does not take into account the groups $G_\alpha$ of the TG structure.
\begin{proof}
This is quite standard, because $K_{\mathcal U}$ is basically just a union of holonomy groupoids (cf. \cite{Ha}). 
The set of objects is obviously an analytic space by \eqref{objectsholonomy}, as well as the set defined in \eqref{morphismholonomy2}, that is the set of holonomy morphisms before taking the quotient by the equivalence relation \eqref{equiv}.  Observe that two distinct points of the same component $K_{\alpha,\beta, L}$ of \eqref{morphismholonomy3} cannot be equivalent. Therefore, the natural projection
map from \eqref{morphismholonomy2} onto \eqref{morphismholonomy3} is \'etale. Now \eqref{morphismholonomy3} is Hausdorff using the same argument as in the proof of Proposition \ref{groupoidrigid}.

For $\alpha$, $\beta$ and $L$ fixed, the source map is the inclusion
\begin{equation}
\label{sourceholonomy}
\sigma\ :\ K_{\alpha,\beta,L}\longrightarrow K_{\alpha}
\end{equation}
and the target map is given by $\phi_{\alpha,\beta, L}$, that is
\begin{equation}
\label{targetholonomy}
\begin{CD}
\tau \ :\ K_{\alpha,\beta, L}@ >\phi_{\alpha,\beta, L}>> K_{\beta_n}.
\end{CD}
\end{equation}
Composition  at the level of \eqref{morphismholonomy2} is given by
\begin{equation}
\label{compholonomy}
(x,\alpha,\beta, L)\times (y=\phi_{\alpha,\beta,L}(x), \beta_n,\gamma, L')\longmapsto (x, \alpha,\beta,\gamma, L, L'),
\end{equation}
thanks to \eqref{compatibility}. And it descends on \eqref{morphismholonomy3} as the composition of germs. This is analytic as a map from
\begin{equation}
\label{companaholonomy}
K_{\alpha,\beta,L}\cap \phi_{\alpha,\beta, L}^{-1} (K_{\beta_n, \gamma, L'})=K_{\alpha,\beta,L}\cap \phi_{\bar\beta, \alpha, \bar L} (K_{\beta_n, \gamma, L'})
\end{equation}
onto $K_{\alpha,\beta,\gamma, L, L'}$ in both cases.
\end{proof}
To finish this section, we want to clarify the relationships between $K_{\mathcal U}$ and the holonomy groupoids of the $L$-foliations. Here it is important to take special care to Remark \ref{atlases}. To avoid confusions, we will index the connected components of the objects of $G_{\mathcal U}$ by $\sqcup A_L$; and those of $K_{\mathcal U}$ by $A$. We insist on the fact that these two sets are different since we added extra indices to construct $K_{\mathcal U}$. With that difference on mind, we have immediately

\begin{proposition}
\label{autregroupoid}
Let $L\in\mathcal L$. The holonomy groupoid of the $L$-foliation is given by the full subgroupoid of $K_{\mathcal U}$ over $\sqcup_{\alpha\in A_L}K_\alpha$.
\end{proposition}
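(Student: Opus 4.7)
The plan is to verify this proposition by checking that the two groupoids coincide on both objects and morphisms. The objects agree tautologically: both consist of the disjoint union $\uniondisjointe_{\alpha\in A_L} K_\alpha$, the transversals of $L$-foliated charts in the regular atlas. Everything therefore reduces to identifying the morphism sets, and the work will be to show that the a priori wider class of morphisms of the full subgroupoid (which allows mixing different $L_i$'s) collapses to the usual $L$-holonomy groupoid modulo the germ-equivalence \eqref{equiv}.

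One inclusion is essentially by construction. The holonomy groupoid of the $L$-foliation (built as in section \ref{rigidified} from the subatlas $(U_\alpha)_{\alpha\in A_L}$ of $V_L$) has morphisms given by germs of compositions $\phi_{\alpha,\beta,(L,\hdots,L)}$ with $\alpha,\beta_1,\hdots,\beta_n\in A_L$, modulo germ equality. Each such composition is by definition a morphism of $K_\mathcal U$, the equivalence relation \eqref{equiv} restricts to the classical germ equality, and the source, target and multiplication laws match those of \eqref{sourceholonomy}--\eqref{compholonomy}. For the reverse inclusion, consider a morphism of $K_\mathcal U$ represented by $(x,\alpha,\beta,(L_1,\hdots,L_n))$ with $\alpha,\beta_n\in A_L$ but possibly $L_i\neq L$ and $\beta_i\notin A_L$. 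Since each step of the chain takes place in some $V_{L_i}$ and the germ is controlled along a single sliding path, I would subdivide each elementary step so that it is contained in an overlap $V_L\cap V_{L_i}$. By the very construction of the regular atlas in \eqref{atlas2}--\eqref{atlas3}, such an overlap (when non-empty along the path) carries charts in $A_L\cap A_{L_i}$ for which the \emph{same} analytic set $K_\gamma$ serves as leaf space for both the $L$- and the $L_i$-foliations. Inserting these intermediate transversals and invoking \eqref{CDhol} for each foliation separately, each $L_i$-holonomy step can be replaced by the corresponding $L$-holonomy step without changing the underlying germ, and the mixed chain is rewritten as a pure $L$-chain.

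The main obstacle is justifying that on a common chart $U_\gamma$ with $L,L_i\in\gamma$ the two elementary germs $\phi_{\gamma,\delta,L}$ and $\phi_{\gamma,\delta,L_i}$ agree, even though the $L$- and $L_i$-leaves inside $U_\gamma\cap U_\delta$ are distinct Fr\'echet submanifolds. The key that makes this work is the way the regular atlas is built: the retraction diagram \eqref{CDhol} holds simultaneously for $\Xi_{\gamma,L}$ and $\Xi_{\gamma,L_i}$ with a common target $K_\gamma$, and the uniqueness part of Theorem \ref{KuranishiTheorem} (as invoked in the construction of \eqref{atlas2}) pins down the induced isomorphism between the common transversals uniquely, hence both germs coincide. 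Once this identification is established, the two groupoids have the same objects, the same morphisms, and the same structure maps \eqref{sourceholonomy}--\eqref{compholonomy}, which proves the proposition.
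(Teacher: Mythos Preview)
The paper offers no proof of this proposition at all; it is stated with the preamble ``we have immediately'' after the warning to keep Remark~\ref{atlases} in mind. So there is nothing to compare against, and your job is really to decide whether the statement is as immediate as advertised. One inclusion certainly is: any pure $L$-chain with all indices in $A_L$ sits inside $K_{\mathcal U}$, and the structure maps match on the nose, exactly as you say.

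The gap is in your reverse inclusion. You claim that on a common chart $U_\gamma$ with $L,L_i\in\gamma$ the elementary germs $\phi_{\gamma,\delta,L}$ and $\phi_{\gamma,\delta,L_i}$ coincide, and you justify this by ``the uniqueness part of Theorem~\ref{KuranishiTheorem}''. But that uniqueness (and the one in Remark~\ref{reducedrigidified}) pins down $\phi_{\gamma,\delta,L}$ \emph{given} the retractions $\Xi_{\gamma,L}$ and $\Xi_{\delta,L}$; it says nothing about comparing the map built from $\Xi_{\cdot,L}$ with the one built from $\Xi_{\cdot,L_i}$. Concretely, for $x\in K_\gamma$ one has $\phi_{\gamma,\delta,L}(x)=\Xi_{\delta,L}(x)$ and $\phi_{\gamma,\delta,L_i}(x)=\Xi_{\delta,L_i}(x)$: these are the intersections of the $L$-plaque, respectively the $L_i$-plaque, through $x$ with $K_\delta$. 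When $a>0$ the $L$-plaques and $L_i$-plaques are \emph{different} codimension-$a$ slices of the same $\text{Diff}^0(X)$-orbit (cf.\ Lemma~\ref{newGdecompolemma}), so these two points of $K_\delta$ lie in the same $G_\delta$-orbit but have no reason to be equal. A mixed loop such as $\phi_{\delta,\gamma,L}\circ\phi_{\gamma,\delta,L_i}$ is then typically a nontrivial germ $K_\gamma\to K_\gamma$ that no pure $L$-chain can produce.

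In short, the step ``both germs coincide'' fails in general, and with it your reduction of mixed chains to pure $L$-chains. This is consistent with the paper's own caveat, stated right after Proposition~\ref{groupoidII}, that $K_{\mathcal U}$ is \emph{not} Morita-invariant precisely because holonomies of different $L$'s get mixed; the discrepancy is exactly what the later Teichm\"uller groupoid repairs by incorporating the $G_\alpha$-action. So either the proposition should be read more loosely than a literal identification of full subgroupoids (the text ``is given by'' leaves some room), or one needs an argument showing that every mixed-chain germ between $A_L$-objects is $\sim$-equivalent to a pure $L$-germ---and your uniqueness appeal does not provide that argument.
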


In particular, if $\mathcal L$ contains a single element, we have Morita equivalence, cf. section \ref{rigidified}.

\begin{corollary}
\label{groupoidIIbis}
Assume that $\mathcal L$ contains a single element $L$,  which is equivalent to saying that $L$ is a common complement of all $H^0(X_J,\Theta_J)$ for $J\in \mathcal F$. Then $K_{\mathcal U}$ is the holonomy groupoid of the $L$-foliation and it is independent of the covering up to Morita equivalence.
\end{corollary}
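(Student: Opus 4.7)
\textbf{Proof plan for Corollary \ref{groupoidIIbis}.} The argument proceeds in two steps, corresponding to the two assertions.

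First, I would observe that when $\mathcal L=\{L\}$ the covering condition \eqref{CF} forces $V_L=V$, so by Definition \ref{Lfoliation} the $L$-foliation is defined on all of $V$, and the chart enlargement of subsection \ref{regatlas} (the charts \eqref{atlas2} and \eqref{atlas3} associated to a pair or an $n$-uple of distinct foliations) adds nothing new: the only multi-$L$ configurations are trivially with $L_i=L$ for all $i$, and the resulting transversals coincide with those already present. Consequently $A=A_L$, and the simple holonomy morphisms $\phi_{\alpha,\beta,L}$ of Definition \ref{simplegerms} are exactly the classical holonomy compositions of one single foliation, identified under \eqref{equiv} modulo equal germs. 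The first assertion is then the degenerate case of Proposition \ref{autregroupoid}: the full subgroupoid of $K_\mathcal U$ over $\uniondisjointe_{\alpha\in A_L}K_\alpha$ is $K_\mathcal U$ itself, and that subgroupoid is by construction the holonomy groupoid of the $L$-foliation of $V$ (compare the construction \eqref{objectshg}--\eqref{morphismholonomyrigid2} in the rigidified case).

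Next, for Morita invariance under a change of covering, I would run the argument already sketched in Remark \ref{moritaequivrigid}. Given two regular atlases $\mathcal U$ and $\mathcal V$ of $V$, form their common refinement $\mathcal U\cup\mathcal V$. Because $K_\mathcal U$ is analytic and \'etale by Proposition \ref{groupoidII}, one may localize it over $\mathcal U\cup\mathcal V$ in the sense of Haefliger; direct comparison of the prescriptions \eqref{objectsholonomy}--\eqref{morphismholonomy3} on each side shows that this localization coincides with $K_{\mathcal U\cup\mathcal V}$, and symmetrically for $\mathcal V$. Since localization over a refinement is a weak equivalence, we get $K_\mathcal U\sim K_{\mathcal U\cup\mathcal V}\sim K_\mathcal V$.

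The step I expect to be the main obstacle is the identification of the localization of $K_\mathcal U$ over the refinement with $K_{\mathcal U\cup\mathcal V}$: one must check that every elementary holonomy germ in the enlarged atlas arises, after composition with chart inclusions, from a finite string \eqref{phialphabetaL} in the original atlas, and that the quotient by \eqref{equiv} is preserved in the process. This verification works here precisely because a single foliation is involved: all germs are honest germs of a classical foliation, and they may be compared leaf-by-leaf via the commutative diagram \eqref{CDhol}. With $|\mathcal L|\geq 2$ this comparison fails, because compositions \eqref{phialphabetaL} mixing different $L_i$ can encode germs of non-leaf-preserving maps between transversals, and these depend essentially on the intermediate charts; this is precisely the subtle point that forces the more elaborate construction of the Teichm\"uller groupoid in subsection \ref{Teichsub}.
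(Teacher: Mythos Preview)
Your proposal is correct and follows essentially the same approach as the paper, which treats the corollary as immediate from Proposition~\ref{autregroupoid} together with the common-refinement argument of Remark~\ref{moritaequivrigid}. Your write-up is in fact more detailed than the paper's, and your closing paragraph correctly isolates the reason the argument breaks for $\lvert\mathcal L\rvert\geq 2$.
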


\begin{remark} 
\label{applicor}
Especially, Corollary \ref{groupoidIIbis} applies to the case where $h^0(\mathcal F)$ is zero, i.e. the automorphism group of all structures of $\mathcal F$ is discrete. But it also applies to the case of complex tori, since the continuous part of their automorphism group is given by translations and since the associated Lie algebra is independent of the complex structure (as subalgebra of the algebra of smooth vector fields). \end{remark}

\subsection{The Teichm\"uller groupoid.}
\label{Teichsub}
As in the previous subsections, we start from a regular atlas $\mathcal U$ of $V$. Here $V$ is $\I_0(a)$, or more generally any open subset of $\I$ such that $V\subset \I_0(a)$. We assume that $V$ is equal to its saturation
\begin{equation}
\label{sat}
V^{\text{sat}}:=\union_{f\in\text{Diff}^0(X)}V\cdot f.
\end{equation}
\noindent Hence, given $x$ in $V$, its complete $\text{Diff}^0(X)$-orbit is in $V$.

For simplicity, we build a new regular atlas from the first one by adding new charts as follows. Each time that $U_{\alpha, L}\cap U_{\beta,L}\not = \emptyset$, we add a covering of $U_{\alpha, L}\cap U_{\beta,L}$ by charts satisfying hypothesis \ref{squarehyp2}.  As a consequence, this new regular atlas (that we still denote $\mathcal U$) satisfies the following condition.
\begin{hypothesis}
\label{intercondition}
Every simple holonomy germ is a composition of germs of morphisms $\phi_{\alpha, \beta, L}$ with $U_{\alpha,L}\subset U_{\beta,L}$ or $U_{\beta,L}\subset U_{\alpha,L}$.
\end{hypothesis}
We call {\it adjacent} two charts $U_{\alpha,L}$ and $U_{\beta,L}$ such that $U_{\alpha,L}\subset U_{\beta,L}$ or $U_{\beta,L}\subset U_{\alpha,L}$. And we call {\it elementary holonomy germ} a holonomy germ $\phi_{\alpha, \beta, L}$ between adjacent charts. So given 
\begin{equation}
\label{ic2}
y\in K_{\alpha,\beta,L}\qquad\text{ and }\qquad y_1:=\phi_{\alpha,\beta, L}(y)\in K_{\beta, \alpha, L}
\end{equation}
with $\phi_{\alpha, \beta, L}$ elementary, \eqref{fatiso} implies that there exists a unique $\xi_1$ in $L$ such that
\begin{equation}
\label{ic3}
y_1=y\cdot e(\xi_1)
\end{equation}
Hence we have
\begin{lemma}
	\label{diff0encoding}
	Let $(\alpha,\beta,L)=(\alpha,\beta_1,\hdots,\beta_n,L_1,\hdots, L_n)$ be a path of adjacent charts. 
	To any $x$ in $K_{\alpha,\beta, L}$, is associated a canonical element in $\text{\rm Diff}^0(X)$, say $\Phi_{(x,\alpha,\beta, L)}$, such that
		\begin{equation}
		\label{diff0clave}
		x\cdot\Phi_{(x,\alpha,\beta, L)}=\phi_{\alpha,\beta, L}(x).
		\end{equation}
\end{lemma}
\begin{remark} 
	The meaning of "canonical" should be clear from the proof. 
\end{remark}

\begin{remark}
	We emphasize that \eqref{diff0clave} is a {\it pointwise identity}. Changing $x$ but keeping $(\alpha,\beta,L)$ fixed gives a different element in $\text{\rm Diff}^0(X)$, as suggested by the notations. Hence, from the one hand, \eqref{diff0clave} is far from being verified by a unique element of $\text{Diff}^0(X)$. And from the other hand, a diffeomorphism $\Phi_{(x,\alpha,\beta, L)}$ has no reason to send a neighborhood of $x$ in $K_\alpha$ onto a neighborhood of $\phi_{\alpha,\beta, L}(x)$ in $K_{\beta_n}$. 
\end{remark}

\begin{proof}
	If $\beta$ has length one, we just define $\Phi_{(x,\alpha,\beta, L)}$ as the map $e(\xi_1)$ given by \eqref{ic2} and \eqref{ic3}. Otherwise $\phi_{\alpha,\beta, L}$ has a canonical decomposition
	\eqref{phialphabetaL} into length one elements. Each element $\phi_{\beta_{i-1},\beta_i,L_i}$ of this decomposition (we take $\beta_0=\alpha$ as a convention) gives rise to an element $\Phi_{(x_{i-1},\beta_{i-1},\beta_i, L_i)}$ where $x_0=x$ and $x_i=\phi_{\beta_{i-1},\beta_i,L_i}(x_{i-1})$ since we started from a path of adjacent charts. And we just set
\begin{equation}
\label{Phi}
\Phi_{(x,\alpha,\beta, L)}:=\Phi_{(x,\alpha,\beta_1, L_1)}\circ\hdots\circ
\Phi_{(x_{n-1},\beta_{n-1},\beta_n, L_n)}
\end{equation}
which obviously satisfies \eqref{diff0clave}.
\end{proof}
\begin{remark}
	\label{contravsco}
	Be careful that composition of holonomy germs is contravariant and composition of elements $\Phi_{(x,\alpha,\beta, L)}$ is covariant.
\end{remark}

Recall the map \eqref{newGaction}. Keep in mind that we need to choose some $L\in\alpha$ to define it.
For each $\alpha\in A$, and each $L\in\alpha$, we denote by $\mathcal A_{\alpha,L}\rightrightarrows K_\alpha$ the corresponding Kuranishi stack whose geometric quotient is described in Proposition \ref{geometricquotient}.
\vspace{5pt}\\
Let us define the Teichm\"uller groupoid $T_{\mathcal U}$ as follows. {\it Objects} are points
\begin{equation}
\label{objectsteich}
(x,\alpha)\in\uniondisjointe_{\alpha\in A}K_{\alpha}
\end{equation}
exactly as for $K_{\mathcal U}$. But we will enlarge the set of morphisms to take into account the automorphism groups. We proceed as in subsection \ref{firstapp}.
\vspace{5pt}\\
First, we set $T_{\alpha, L}:=\mathcal A_{\alpha,L}$. Then, we set, for each path $(\alpha,\beta,L)$ of adjacent charts,
\begin{equation}
\label{morphismsteich1}
T_{\alpha,\beta,L}:=\mathcal A_{\alpha, L_1}\times_{\phi\circ t,s} \mathcal A_{\beta_1,L_2}\times_{\phi\circ t,s}\hdots\times_{\phi\circ t,s} \mathcal A_{\beta_{n-1},L_n}\times_{\phi\circ t,Id}K_{\beta_n}
\end{equation}
Here the $\phi$ in the fibered product $\mathcal A_{\beta_i}\times_{\phi\circ t,s}\mathcal A_{\beta_{i+1}}$ stands for $\phi_{\beta_{i},\beta_{i+1}, L_{i+1}}$. An element of $T_{\alpha, \beta, L}$ is of the form
\begin{equation}
\label{elementT}
\Big ((x,g_1),(\phi_{\alpha,\beta_1,L_1}(x\cdot g_1),g_2),(\phi_{\beta_1,\beta_2,L_2}((\phi_{\alpha,\beta_1,L_1}(x\cdot g_1))\cdot g_2,g_3),\hdots \Big )
\end{equation}
 We denote it by $(x,\alpha,\beta,L,g)$ with $g=(g_1,\hdots,g_n)$. We consider thus the space
\begin{equation}
\label{morphismsteich2}
\uniondisjointe_{n\geq 0}\left (\uniondisjointe_{(\alpha,\beta, L) \in C^{adj}_{n}} T_{\alpha,\beta, L}\right )
\end{equation}
where $C^{adj}_n$ is defined as the subset of adjacent elements of \eqref{Cn} for $n\geq 1$ and $C^{adj}_0$ is just the set of $(\alpha, L)$ with $L\in\alpha$.
 \vspace{5pt}\\
 However, as in subsection \ref{firstapp}, we still have to take the quotient of \eqref{morphismsteich2} by an appropriate equivalence relation to obtain the set of morphisms. The crucial remark to do that is to notice that there is a natural map $\Psi$ from \eqref{morphismsteich2} into $\DK$ which sends an element $(x,\alpha,\beta,L,g)$ onto
 \begin{equation}
 \label{elementDiff}
 g_1\circ \Phi_{(x\cdot g_1,\alpha,\beta_1, L_1)}\circ g_2\circ \Phi_{((\phi_{\alpha,\beta_1,L_1}(x\cdot g_1))\cdot g_2,\beta_1,\beta_2, L_2)}\circ\hdots
 \end{equation}
This allows us to identify two such morphisms with same source and target if they correspond to the same element of $\text{Diff}^0(X)$. To be precise, we define
\begin{equation}
\label{morphismsteich3}
\left .
\begin{aligned}
&(x,\alpha,\beta,L,g)\cr
&\qquad\sim\cr
&(x',\alpha',\beta',L',g')
\end{aligned}
\right \}\iff\left\{
\begin{aligned}
&x=x',\ \alpha=\alpha',\ \beta_n=\beta'_{n'}\cr
&\qquad\text{and}\cr
&\Psi(x,\alpha,\beta,L,g)=\Psi(x,\alpha,\beta',L',g')
\end{aligned}
\right .
\end{equation}
{\it Morphisms} are now defined as points 
\begin{equation}
\label{morphismsteich4}
(x,\alpha,\beta,L,g)\in\uniondisjointe_{n\geq 0}\left (\uniondisjointe_{(\alpha,\beta,L)\in C_{n}}T_{\alpha,\beta, L}\right )\Bigg /\sim.
\end{equation}

\begin{remark}
\label{subtle}
There is a subtle point here we want to emphasize. Equivalence \eqref{morphismsteich3} is an equivalence of elements in $\text{Diff}^0(X)$, whereas equivalence \eqref{equiv} is an equivalence of holonomy maps, the relation between these two type of maps being stated in Lemma \ref{diff0encoding}. In other words, \eqref{equiv} concerns the geometric orbits of $\text{Diff}^0(X)$ in $\mathcal I_0$, whereas \eqref{morphismsteich3} concerns the parametrization of the geometric orbits by $\text{Diff}^0(X)$. In particular, if an element
of $\text{Diff}^0(X)$ is an automorphism for an open neighborhood of structures in $\mathcal I_0$, then it appears as a morphism of \eqref{morphismsteich4} but not as a morphism of \eqref{morphismholonomy3}.
\end{remark}

\section{The Riemann moduli groupoid.}
\label{Riemann}

In this short section, we adapt the construction of section \ref{Teichmueller} to obtain a groupoid that describes the action of the full diffeomorphism group $\text{Diff}^+(X)$ onto $\mathcal I_0$. Fix $V$ as before. Thanks to \eqref{Riem}, we just have to add the action of the mapping class group \eqref{mcgroup} on the Teichm\"uller groupoid. To do that, we assume that $V$ is equal to its saturation
\begin{equation}
\label{sat2}
V^{\text{sat}}:=\union_{f\in\text{Diff}^+(X)}V\cdot f.
\end{equation}

To cover $V$ with Kuranishi charts, we proceed as follows. We first choose some regular covering of $V$ with Kuranishi charts satisfying Hypothesis \ref{intercondition}. Then we choose some $f_i$ in $\text{Diff}^+(X)$ for every class of $\mathcal M\mathcal C(X)$. Call $\mathcal J$ the set of indices and set $f_{\mathcal J}=(f_i)_{i\in\mathcal J}$.
We assume that $(f_i)^{-1}$ belongs to $f_{\mathcal J}$ for all $i$. But we cannot in general assume that $f_{\mathcal J}$ is stable under composition. This would imply
that we realize the mapping class group of $X$ as a subgroup of $\text{Diff}^+(X)$, which is not always possible. 
\vspace{5pt}\\
For any $U_{\alpha,L}$, we define $U_{\alpha,L}\cdot f$ and $K_\alpha\cdot f$ (well defined since $K_\alpha$ is included in $U_{\alpha,L}$), so that the sequence
\begin{equation}
\label{faction}
\begin{CD}
K_\alpha\cdot f\hookrightarrow U_{\alpha,L}\cdot f@ > (\cdot f)\circ \Xi_{\alpha,L}\circ (\cdot f^{-1})>> K_{\alpha}\cdot f
\end{CD}
\end{equation}
is a Kuranishi chart based at $J_\alpha\cdot f$.
\vspace{5pt}\\
Then we may perform the constructions of section \ref{Teichmueller}. The Riemann moduli groupoid $M_{\mathcal U}$ is now defined 
as the translation groupoid of the action of the mapping class group onto $T_{\mathcal U}$. More precisely, it is obtained as follows. We define the set of objects 
as in \eqref{objectsteich}. As for the morphisms, we start with
\begin{equation}
\label{morphismsR}
\uniondisjointe_{n\geq 0}\uniondisjointe R_{\alpha,\beta, L, I}
\end{equation}
where 
\begin{equation}
\label{R}
 R_{\alpha,\beta, L, I}=\{(x,\alpha,\beta,L,g,I)\in T_{\alpha,\beta, L}\times\mathcal J^n\}
\end{equation}
and we follow the same strategy as in section \ref{Teichmueller}.
The new map $\Psi$, say $X$ sends an element $(x,\alpha,\beta,L,g)$ onto the element $X(x,\alpha,\beta,L,g)$ defined as
\begin{equation}
\label{elementDiff+}
g_1\circ \Phi_{(x\cdot g_1,\alpha,\beta_1, L_1)}\circ f_{i_1}\circ g_2\circ \Phi_{((\phi_{\alpha,\beta_1,L_1}(x\cdot g_1))\cdot g_2,\beta_1,\beta_2, L_2)}\circ f_{i_2}\hdots\end{equation}
of $\DKp$ (compare with \eqref{elementDiff}). 

As in section \ref{Teichmueller}, we take the quotient of \eqref{morphismsR} by the equivalence relation of representing the same diffeomorphism through \eqref{elementDiff+}, cf. \eqref{morphismsteich3}. And we define the set of morphisms as this quotient. 

\section{The structure of the Teichm\"uller and the Riemann moduli stacks.}
\label{main}
In this section, building on the previous sections, we prove the main results of this paper.

\subsection{The structure of the Teichm\"uller stack.}
\label{mainTeich}
The aim of this subsection is to prove
Theorem \ref{maintheorem}. In fact, we will prove the following statement, from which Theorem \ref{maintheorem} easily follows.

\begin{theorem}
	\label{maintheorembis}
	Let $V$ be an open set of $\mathcal I$. Assume that the function $h^0$ is bounded on $V$. Then,
	the Teichm\"uller groupoid is a smooth analytic atlas of the Teichm\"uller stack $\mathcal T(X,V)$.
\end{theorem}
 
 In the general case, we have
 
\begin{corollary}
 \label{maincorollary}
 Let $V$ be an open set of $\mathcal I$. Then, the Teichm\"uller stack $\mathcal T(X,V)$ is the direct limit of Artin analytic stacks. 
\end{corollary}

\begin{proof}[Proof of Corollary \ref{maincorollary}.]
 For every nonnegative integer $a$, we define $\mathcal I(a)$ as in \eqref{I(k)}. We consider the Teichm\"uller stack $\mathcal T(X,V)$ as the direct limit of stacks
 \begin{equation}
 \label{directlimit}
 \mathcal T(X,V\cap \I(0))\hookrightarrow \hdots\hookrightarrow \mathcal T(X,V\cap \I(a))\hookrightarrow \hdots
\end{equation}
Applying then Theorem \ref{maintheorem} replacing $V$ with $V\cap \mathcal I(a)$ for every $a$ yields the result.
\end{proof}

The manifold $\mathbb S^2\times \mathbb S^2$ gives such an example, cf. Example \ref{Hirzebruchbis}.\vspace{5pt}\\
%
%
We begin with showing that the set of morphisms of $T_{\mathcal{U}}$ completely describes the action of $\text{Diff}^0(X)$.

\begin{lemma}
\label{isoteich}
We have:
\begin{enumerate} 
\item[(i)] Let $x\in K_\alpha$ an object. Then the set of $x$-isomorphisms is $\text{\rm Aut}^1(X_x)$.
\item[(ii)] Let $x\in K_\alpha$ and $y\in K_\beta$. Then the set of morphisms from $x$ to $y$ is the set 
\begin{equation}
\label{morphismsxy}
\{f\in\text{\rm Diff}^0(X)\mid x\cdot f=y\}.
\end{equation}
\end{enumerate}
\end{lemma}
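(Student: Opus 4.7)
The plan is to derive (i) immediately from (ii), so the substance is in part (ii). For (ii), the injection from the set of morphisms from $x$ to $y$ into $\{f\in\text{Diff}^0(X) : x\cdot f=y\}$ is built into the definitions: Lemma \ref{diff0encoding} attaches to each representative $(x,\alpha,\beta,L,g)\in T_{\alpha,\beta,L}$ a canonical $\Psi_{(x,\alpha,\beta,L,g)}\in\text{Diff}^0(X)$ satisfying $x\cdot\Psi=\psi_{\alpha,\beta,L}(x,g)=y$, and the equivalence relation \eqref{morphismsteich3} is tailored so that $[(x,\alpha,\beta,L,g)]\mapsto\Psi_{(x,\alpha,\beta,L,g)}$ is well-defined and injective onto its image.

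For the surjective direction, given $f\in\text{Diff}^0(X)$ with $x\cdot f=y$, I will fix a smooth isotopy $(f_t)_{t\in[0,1]}$ from $Id$ to $f$ in $\text{Diff}^0(X)$ (possible since $\text{Diff}^0(X)$ is pathwise connected) and consider the smooth path $\gamma(t):=x\cdot f_t$ in $\mathcal I_0$ from $x$ to $y$. By compactness of $[0,1]$ together with the availability in a regular atlas of charts of type \eqref{atlas2} with common leaf space, I will choose a subdivision $0=t_0<\cdots<t_N=1$ and a chain of indices $\alpha_0=\alpha,\alpha_1,\ldots,\alpha_N=\beta$ with common subspaces $L_i\in\alpha_{i-1}\cap\alpha_i$, such that each segment $\gamma([t_{i-1},t_i])$ lies in the $L_i$-foliated open set $U_{\alpha_{i-1}}\cap U_{\alpha_i}$ and each incremental diffeomorphism $F_i:=f_{t_{i-1}}^{-1}\circ f_{t_i}$ is small enough for Lemma \ref{newGdecompolemma} to apply with base point near $\gamma(t_{i-1})$. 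That lemma yields a unique factorization $F_i=i(g_i)\circ e(\xi_i)$ with $g_i\in G_{\alpha_{i-1}}$ (which, up to the canonical identification in the overlap, can be regarded as an element of $G_{\alpha_i}$) and $\xi_i\in L_i$, both small. Collecting these factors produces a tuple $(x,\alpha,(\alpha_1,\ldots,\alpha_N),(L_1,\ldots,L_N),(g_1,\ldots,g_N))$ in the appropriate $T_{\alpha,\bar\alpha,\bar L}$, and iterating the explicit formulas \eqref{enc2}, \eqref{enc4}, \eqref{enc6} defining $\Psi$ shows that its canonical diffeomorphism is exactly $f=F_1\circ F_2\circ\cdots\circ F_N$; hence $f$ lies in the image.

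Part (i) then follows at once: an $x$-isomorphism is by (ii) an element of $\{f\in\text{Diff}^0(X):x\cdot f=x\}$, and the condition $x\cdot f=x$ is, via \eqref{actionJ}, equivalent to $f$ preserving the complex structure $J_x$; so the set of $x$-isomorphisms is $\text{Aut}(X_x)\cap\text{Diff}^0(X)=\text{Aut}^1(X_x)$ by \eqref{Aut1}.

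The hard part will be the bookkeeping in the surjectivity argument: verifying that the canonical $\Psi$ produced step by step through Lemma \ref{diff0encoding} equals $f$ \emph{on the nose}, not merely up to isotopy. This should reduce to an induction on $N$, leaning on the uniqueness clause of Lemma \ref{newGdecompolemma} and the commutativity of \eqref{CDhol} to match, at each intermediate transversal $K_{\alpha_i}$, the leaf translations $e(\xi_i)$ with the holonomy morphisms $\phi_{\alpha_{i-1},\alpha_i,L_i}$. A subsidiary technical point is choosing the subdivision fine enough that a common $L_i\in\alpha_{i-1}\cap\alpha_i$ exists at every overlap, which is precisely what the extra charts \eqref{atlas2}--\eqref{atlas3} added to a regular atlas were designed to guarantee; and reconciling the convention $g_i\in G_{\alpha_{i-1}}$ arising naturally from the decomposition with the convention $g_i\in G_{\beta_i}$ imposed by \eqref{morphismsteich1}, which will be handled by the canonical identification of the two translation groupoids over the common leaf space.
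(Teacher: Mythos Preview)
Your overall strategy matches the paper's: reduce (i) to (ii), note that the map to $\text{Diff}^0(X)$ is well defined and injective by construction, and for surjectivity track an isotopy $f_t$ through a chain of charts, decomposing it locally via Lemma~\ref{newGdecompolemma}. The paper likewise subdivides the path $x\cdot\phi_t$, first treating the case where the path stays in a single $U_\alpha$ (projecting it to $K_\alpha$ via $\Xi_{\alpha,L}$) and then the case where it crosses from $U_\alpha$ to $U_\beta$.

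There is, however, a genuine gap in the way you propose to handle the chart crossings. You factor $F_i=i(g_i)\circ e(\xi_i)$ with $g_i\in G_{\alpha_{i-1}}$ and then assert that $g_i$ ``can be regarded as an element of $G_{\alpha_i}$'' via a ``canonical identification of the two translation groupoids over the common leaf space.'' No such identification of \emph{groups} exists: $G_{\alpha_{i-1}}=\text{Aut}^0(X_{\alpha_{i-1}})\times H_L$ and $G_{\alpha_i}=\text{Aut}^0(X_{\alpha_i})\times H_L$ are built from automorphism groups of different complex manifolds, and the Morita equivalence of the transverse groupoids is an equivalence of \emph{groupoids}, not a group isomorphism. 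So you cannot simply relabel $g_i$ as an element of $G_{\alpha_i}$ to fit the convention of \eqref{morphismsteich1}.

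The paper avoids this trap by a different bookkeeping: rather than factoring the raw increments $F_i$, it first projects the path to the transversals via $\Xi_{\alpha,L}$ and $\Xi_{\beta,L}$, obtaining points $z_{t_0}\in K_\alpha$ and $z'_{t_0}\in K_\beta$ with $z'_{t_0}=\phi_{\alpha,\beta,L}(z_{t_0})$. The diffeomorphism $\phi$ then decomposes as (composition of length-zero morphisms in $T_{\alpha,L}$, so with automorphism parts in $G_\alpha$) followed by the pure holonomy $\Phi_{(z_{t_0},\alpha,\beta,L)}$ followed by the inverse of (composition of length-zero morphisms in $T_{\beta,L}$, with automorphism parts in $G_\beta$). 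In this way the automorphism factors are always taken in the group attached to the chart one is currently in, and no transfer between $G_{\alpha_{i-1}}$ and $G_{\alpha_i}$ is ever needed. Your argument can be repaired along these lines, but the step you flagged as ``subsidiary'' is in fact the crux, and the resolution you proposed does not work.
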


\begin{proof}
(ii) Let $x\in K_{\alpha}$ be an object. It is only connected through a morphism to a point $y$ in some $K_{\tilde\alpha}$ which belongs to the same orbit of $\text{Diff}^0(X)$.
Let now $y\in K_{\tilde\alpha}$ such that 
\begin{equation}
\label{itii}
y=x\cdot f
\end{equation}
for some $f$ in $\text{Diff}^0(X)$. Choose also an isotopy
\begin{equation}
\label{itisotopy}
y_t=x\cdot f_t
\end{equation}
from $x$ to $y$. To this isotopy is associated a sequence $t_0=0<t_1<...<t_{n-1}<t_n=1$ and a path of adjacent charts $(\alpha=\beta_0,\beta,L)$ (with $\tilde\alpha=\beta_n$) such that $y_t$ belongs to $U_{\beta_i,L_{i+1}}$ for $t_i\leq t\leq t_{i+1}$.  Choose $t_{i-1}<T_i<t_{i}$ for all $0<i<n$ such that 
\begin{equation}
y_{T_i}\in U_{\beta_{i-1},L_{i}}\cap U_{\beta_{i},L_{i+1}}
\end{equation}
 We may modify locally $f$ around each time $T_i$ in such a way that $y_{T_i}$ belongs to $K_{\beta_i}$ but still $y_t$ belongs to $U_{\beta_i,L_{i+1}}$ for $t_i\leq t\leq t_{i+1}$. Indeed, setting $\xi=\Upsilon_{\beta_i,L_{i+1}}(y_{T_i})$, we have by definition that $y_{T_i}\cdot e(\xi)$ belongs to $K_{\beta_i}$. 
 It is thus enough to take a bump function $b_i$ with support around $T_i$ such that $b(T_i)=1$ and $y_t\cdot e(b_i(t)\xi)$ stays in $U_{\beta_i,L_{i+1}}$ for $t_i\leq t\leq t_{i+1}$.

Decompose now $f$ as $f^1\circ\hdots\circ f^n$ with $f^1:=f_{T_1}$, then $f^2=f_{T_1}^{-1}\circ f_{T_2}$ and so on.

We claim that $f^1$ belongs to set of morphisms of the Teichm\"uller groupoid between $y_{T_0}=y_0=x\in K_\alpha$ and $y_{T_1}\in K_{\beta_1}$. Indeed, setting
\begin{equation}
\label{ut}
u_t:=f_t\circ e(\Upsilon_{\beta_0,L_1}(y_t))
	\qquad 0\leq t\leq T_1
\end{equation} 
we see that
\begin{equation}
\label{zt}
z_{t}=y_{T_0}\cdot u(t)\in K_{\beta_0}
\qquad \text{ for all }0\leq t\leq T_1
\end{equation}
Now, we deduce easily from \eqref{zt} and \eqref{ut} that $u_{T_1}$ belongs to set of morphisms of the Teichm\"uller groupoid. Indeed, we can write $u_{T_1}$ as a finite composition of elements of $D_{\alpha,L}$ sending a point of $K_\alpha$ to another point of $K_\alpha$, so $u_{T_1}$ is a morphism of $\mathcal A_{\alpha,L}$. 

Moreover, $e(\Upsilon_{\beta_0,L_1}(y_{T_1}))$ is also an element of $D_{\alpha,L}$, hence $f_1$ being a composition of elements of $D_{\alpha,L}$ is also a morphism of $\mathcal A_{\alpha,L}$.
The same line of arguments proves that $f_i$ is a morphism of $\mathcal A_{\beta_i,L_{i+1}}$ for all i, hence, by composition, we are done.
\vspace{5pt}\\
 (i) Just apply (ii) to the case $x=y$.
\end{proof}

\begin{remark}
\label{Aut10}
Notice from the proof of Lemma \ref{morphismsxy} that an element of $\text{Aut}^1(X)$ which is not in $\text{Aut}^0(X)$ decomposes into a non trivial combination of holonomy maps and automorphisms of $\text{Aut}^0(X)$.
\end{remark}

We are now in position to prove Theorem \ref{maintheorembis}.

\begin{proof}[Proof of main Theorem \ref{maintheorembis}]
Let us start proving that the Teichm\"uller group\-oid is smooth analytic. First, the set of objects is a countable union of analytic spaces by \eqref{objectsteich}. To prove that the set of morphisms is an analytic space and the the source map a smooth morphism, we proceed as in the proof of Proposition \ref{analyticKurstack}. Recall that the map $\Psi$ gives a continuous injection from the set of morphisms \eqref{morphismsteich4} to $\DK$. Hence it is Hausdorff. Besides, the map $\Psi$ realizes \eqref{morphismsteich4} as an an analytic subspace of $\DK$. Thanks to Lemma \ref{isoteich}, it is the set of elements $(x,f)$ in $\DK$ such that $x\cdot f$ belongs to the disjoint union of the $K_\alpha$. 

Then, if $(x,\Psi(x,\alpha,\beta, L,g))$ is a morphism, every morphism $(x',F')$ close to it can be written as $(x',\Psi(x,\alpha,\beta, L,g)\circ h\circ e(\xi))$, with $h\circ e(\xi)\in \mathcal D_{\beta_n,L_n}$ and  
\begin{equation}
\label{xichart}
\xi=\Upsilon_{\beta_n,L_n}(x'\cdot (\Psi(x,\alpha,\beta, L,g)\circ h))
\end{equation}
so we have local isomorphisms (compare with \eqref{KuranishiStackChart})
\begin{equation}
\label{TeichStackChart}
(x',F')\longmapsto (x',h)\in K_\alpha\times \text{Aut}^0(X_{\beta_n})
\end{equation}
In charts \eqref{TeichStackChart}, the source map is just the projection $(x',F')\mapsto x'$
so is analytic and a smooth morphism.
\vspace{5pt}\\
Multiplication, resp. inverse is given by composition, resp. inverse, of diffeomorphisms in $\DK$. The target map is given by action of the diffeomorphisms. Recall that the action is analytic and that composition, resp. inverse, when restricted to finite dimensional analytic subspaces containing only $C^\infty$ elements, are also analytic, cf. Proposition \ref{analyticKurstack} and its proof. The anchor map is obviously analytic. \vspace{5pt}\\
We prove now that the stackification of the Teichm\"uller groupoid is $\mathcal T(X,V)$.
Let $\mathcal U$ be a regular atlas of $V$. 
We assume \eqref{intercondition}.
An object over $S$ in the stackification of $T_{\mathcal U}$ is given by an open covering $(S_a)$ of $S$,  a collection of maps
\begin{equation}
\label{fa}
f_a \ :\ S_a\longrightarrow K_{\alpha}
\end{equation}
($\alpha$ depends on $a$) and a collection of gluings
\begin{equation}
\label{hab}
h_{ab}=(f_{ab},g_{ab})\ : S_a\cap S_b \longrightarrow \Psi(T_{\alpha, \beta, L})\subset \DK
\end{equation}
satisfying a compatibility condition as well as the usual cocycle condition. More precisely, the compatibility condition is that,
given $x$ in $S_a\cap S_b$, we have
\begin{equation}
\label{sigma}
\sigma (h_{ab}(x))=f_{ab}(x)=f_a(x)
\end{equation}
and 
\begin{equation}
\label{tau}
\tau(h_{ab}(x))=( f_{ab}(x))\cdot g_{ab}(x)=f_b(x).
\end{equation}
We will show that this is exactly the data we need to construct a $(X,V)$-family $\mathcal X$. Set
\begin{equation}
\label{XV1}
\mathcal K_{\alpha}:=(K_{\alpha}\times X, \mathcal J_\alpha)
\end{equation}
where the operator $\mathcal J_\alpha$ along the fiber $\{J\}\times X$ is tautologically defined as $J$. We use the $C^\infty$-marking given by the trivialization $K_\alpha\times X$. This defines a $(X, V)$-family over $S_a$, cf. \cite{Ku3}. 
\vspace{5pt}\\
The main point is that $g_{ab}$ lifts canonically to an isomorphism between the restriction of $\mathcal K_{\alpha}$ over $f_a(S_a\cap S_b)$ and the restriction of $\mathcal K_{\beta}$ over $f_b(S_a\cap S_b)$. Define the canonical lifting of \eqref{tau} as
\begin{equation}
\label{clifting}
X_{ab}(x,y):=\Big (( f_{ab}(x))\cdot g_{ab}(x),(g_{ab}(x))(y)\Big ) 
\end{equation}
for
\begin{equation}
\label{liftingcond}
x\in S_a\cap S_b\quad\text{ and }\quad y\in X.
\end{equation}
Observe that the cocycle condition just means that the maps
$(g_{ab}(x))$
verify the cocycle condition in $\text{Diff}^0(X)$. Hence the $C^\infty$-markings coincide on the intersections. Now, define $\mathcal X$ as
\begin{equation}
\label{defX}
\mathcal X=\uniondisjointe_a f_a^*(\mathcal K_{\alpha})/\sim
\end{equation}
where $\sim$ is the equivalence relation
\begin{equation}
\label{XVequiv}
(x,y,a)\sim (x',y',b)\iff (x',y')=X_{ab}(f_a(x),y).
\end{equation}
This defines a $(X,V)$-family thanks to the cocycle condition.
\vspace{5pt}\\
Hence, every locally trivial torsor associated to $T_{\mathcal U}$ is a $(X,V)$-family. 
\vspace{5pt}\\
Let $S\in\mathfrak S$ and $S'\in\mathfrak S$. Let $g : S\to S'$ be a morphism. Let $(f_a,S_a, h_{ab})$, respectively $(f'_{a'}, S'_{a'}, h'_{a'b'})$ be an object over $S$, respectively $S'$ (we use \eqref{fa}, \eqref{hab} and so on). A morphism between them and over $g$ is given by a collection of maps $F_{aa'}$ from $S_a$ to the set of morphisms of $T_{\mathcal U}$ such that
\begin{enumerate}
	\item[(i)] For all $x\in S_a$, we have $\sigma (F_{aa'}(x))=f_a(x)$ and $\tau (F_{aa'}(x))=f'_{a'}\circ g(x)$.
	\item[(ii)] $F_{bb'}\circ h_{ab}=h'_{a'b'}\circ F_{aa'}$.
\end{enumerate}
It is straightforward, although awkward, to check that (i) shows that $F_{aa'}$ induces local cartesian diagrams
\begin{equation}
\label{XVCD}
\begin{CD}
f_a^*\mathcal K_\alpha @ >>> (f'_\alpha)^*\mathcal K_{\alpha'}\cr
@ VVV @ VVV\cr
S_a@ >>g> S'_{a'}
\end{CD}
\end{equation}
that is local morphisms between the families associated to the descent data; and that (ii) implies that these local morphisms commute with the gluing \eqref{XVequiv}, hence define a global morphism of $(X,V)$-families.
\vspace{5pt}\\
All this shows the existence of a functor over $\mathfrak S$ from the stackification of $T_{\mathcal U}$ to $\mathcal T(X, V)$. But Kuranishi's Theorem shows that any $(X,V)$-family is locally isomorphic to a pull-back family $f_a^*\mathcal K_\alpha$. Hence we may choose a covering of the base and a collection of maps $f_a$  as in \eqref{fa}, with associated gluing maps \eqref{hab} satisfying \eqref{sigma} and \eqref{tau} so that it is isomorphic to some family \eqref{defX}. Hence this functor is essentially surjective.
\vspace{5pt}\\
Moreover, because of Lemma \ref{morphismsxyR}, morphisms between two objects of the stackification of $T_{\mathcal U}$ coincide with morphisms between them as objects of $\mathcal T(X,V)$. Therefore the functor is fully faithful and the two stacks are indeed isomorphic.
This finishes the proof.
\end{proof}
We notice the following

\begin{corollary}
 \label{etale?}
 The Teichm\"uller groupoid is an \'etale analytic presentation of the Teichm\"uller stack $\mathcal T(X,V)$ if and only if the function $h^0$ is identically zero on $V$.
\end{corollary}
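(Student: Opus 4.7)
The plan is to read off the criterion from the explicit local form of the source and target maps established in the proof of Theorem \ref{maintheorem}, combined with Lemma \ref{isoteich} which identifies the isotropy groups. Since étaleness of a smooth groupoid amounts to source and target having discrete fibres, the question reduces to understanding when the ``group factor'' $G_{\beta_n}$ in the local model $\tilde T_{\alpha,\beta,L}\subset K_{\alpha,\beta,L}\times G_{\beta_n}$ of \eqref{tildeT} contributes nontrivially along the fibres of $\sigma$.

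For the direction $(\Leftarrow)$, assume $h^0\equiv 0$ on $\mathcal I_0$. Then $\mathcal I^1=\emptyset$, so $\mathcal C_0=\mathcal I_0$, and the construction of subsection \ref{multifoliategeneral} can be carried out with bound $a=0$: every admissible $L\in G(\Sigma(TX))$ has codimension $0$, the supplement $H_L$ of \eqref{L+H} vanishes, $\text{Aut}^0(X_\alpha)$ is trivial for every $\alpha$, and hence $G_\alpha=\{e\}$ in \eqref{newG}. The sets $\tilde T_{\alpha,\beta,L}$ of \eqref{tildeT} reduce to $K_{\alpha,\beta,L}$, the source map is the open embedding $K_{\alpha,\beta,L}\hookrightarrow K_\alpha$, and the target map is the holonomy isomorphism $\phi_{\alpha,\beta,L}$; both are therefore étale. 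By Lemma \ref{projection} étaleness descends through the quotient by $\sim$, so the Teichm\"uller groupoid is an étale analytic presentation in this case---essentially the ordinary holonomy groupoid of the foliation produced by Proposition \ref{foliationsimple}.

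For the direction $(\Rightarrow)$, suppose that $h^0(J_0)>0$ for some $J_0\in\mathcal I_0$. View $J_0$ as an object of the Teichm\"uller groupoid sitting in some Kuranishi space $K_\alpha$. By Lemma \ref{isoteich}(i) the isotropy group at $J_0$ equals $\text{Aut}^1(X_{J_0})$, which contains the connected complex Lie group $\text{Aut}^0(X_{J_0})$ of positive dimension $h^0(J_0)$. The equivalence relation \eqref{morphismsteich3} identifies two morphisms only when they represent the same element of $\text{Diff}^0(X)$, so distinct elements of $\text{Aut}^0(X_{J_0})$ give pairwise distinct classes in \eqref{morphismsteich4}. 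Consequently the source fibre $\sigma^{-1}(J_0)$ contains a positive-dimensional analytic subspace, and $\sigma$ cannot be étale.

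The only delicate point---and what I regard as the main obstacle---is ensuring that no collapsing occurs in the passage to the quotient \eqref{morphismsteich4} which would artificially render the isotropy discrete. This is exactly what Lemma \ref{isoteich} rules out: it identifies the isotropy with $\text{Aut}^1(X_{J_0})$ as an analytic group, with its genuine dimension, rather than as a set of equivalence classes whose geometry might degenerate under the identifications coming from Lemma \ref{diff0encoding}.
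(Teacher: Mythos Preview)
Your proof is correct and follows essentially the same approach as the paper, which simply invokes Theorem~\ref{maintheorem} together with Lemma~\ref{isoteich}. You have unpacked the $(\Leftarrow)$ direction more explicitly by taking the bound $a=0$ and observing that all $G_\alpha$ collapse, whereas the paper leaves this implicit; the $(\Rightarrow)$ direction via positive-dimensional isotropy is identical.
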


\begin{proof}
Use Theorem \ref{maintheorembis} and the fact that the isotropy group of a point $J$ is $\text{Aut}^1(X_J)$ by Lemma  \ref{isoteich}. 
\end{proof}

Moreover, if all structures in $V$ are rigidified, then the \'etale Teichm\"uller groupoid coincides with that of Definition \ref{teichrigid}.

\begin{remark}
\label{Catanese}
It is important to compare the local structure of the Teichm\"uller stack at some point $J$ with its Kuranishi space $K_J$, or better with its Kuranishi stack $\mathcal A_J\rightrightarrows K_J$. The rigidified case is of special interest and amounts to asking if the Teichm\"uller stack of $X$ is locally isomorphic at $J$ to the analytic space $K_J$, cf \cite{Catsurvey}.
\vspace{5pt}\\
Catanese shows in \cite[Theorem 45]{Catsurvey}, that, for a minimal surface $S$ of general type, if $\text{Aut}(S)$ is a trivial group, or
if  $S$ is rigidified with ample canonical bundle, then the Teichm\"uller space is locally homemorphic to the Kuranishi space. He also shows in \cite[Proposition 15]{Catsurvey} that the same result holds for K\"ahler manifolds with trivial canonical bundle. This is used by Verbitsky in \cite{Verbitsky}, see Example \ref{HK}.
\vspace{5pt}\\
This question is equivalent to asking if there can be non trivial simple holonomy morphisms. In particular, when {\it all} the structures of a connected component $\mathcal I_0$ are rigidified, a positive answer means that the holonomy groupoid of the $\text{Diff}^0(X)$-foliation of $\mathcal I_0$ is trivial, hence that the foliation itself is trivial.\vspace{5pt}\\
This seems however too much to expect in general and suggests the following
\begin{problem}
 \label{holonomyproblem}
 Find a compact $C^\infty$ manifold $X$ with a connected component $\mathcal I_0$ of rigidified structures and with a non-trivial Teichm\"uller groupoid.
\end{problem}
To begin with, it would be very interesting to have an example of an oriented smooth manifold $X$ such that $\mathcal T(X)$ is the leaf space of an irrational foliation of a complex torus.
\end{remark}

\subsection{The structure of the Riemann moduli stack.}
\label{mainRiemann}
Analogously, we will prove Theorem \ref{mainR}. It is obtained as an easy consequence of the more precise
\begin{theorem}
	\label{mainRbis}
	Let $V$ be an open subset of $\mathcal I$. Assume that the function $h^0$ is bounded on $V$. Then, the Riemann moduli groupoid is a smooth analytic atlas of the Riemann moduli stack $\mathcal M(X,V)$.
\end{theorem}
In the general case, 
 
\begin{corollary}
 \label{maincorollaryR}
 Let $V$ be an open subset of $\mathcal I$. Then, the Riemann moduli stack $\mathcal M(X,V)$ is the direct limit of Artin analytic stacks. 
\end{corollary}

The proof of Corollary \ref{maincorollaryR} is similar to that of Corollary \ref{maincorollary}. The proof of Theorem \ref{mainRbis} follows that of Theorem \ref{maintheorembis}.
As in the previous section, we first notice that

\begin{lemma}
\label{morphismsxyR}
Pick $x$ and $y$ in the set of objects. Then, the set of morphisms  joining $x$ to $y$ is
\begin{equation}
\label{morxyR}
\{f\in\text{\rm Diff}^+(X)\mid y=x\cdot f\}.
\end{equation}
\end{lemma}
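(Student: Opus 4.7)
The plan is to run the analogue of Lemma~\ref{isoteich}(ii), with the Riemann groupoid's extra mapping-class generators $f_i\in f_{\mathcal J}$ handling the passage from $\text{Diff}^0(X)$ to $\text{Diff}^+(X)$. The forward inclusion is immediate from the definitions: any morphism class from $x$ to $y$ is represented by some $(x,\alpha,\beta,L,g,I)\in R_{\alpha,\beta,L,I}$, and Lemma~\ref{diffencoding} attaches to it a canonical $X_{(x,\alpha,\beta,L,g,I)}\in\text{Diff}^+(X)$ with $y=x\cdot X_{(x,\alpha,\beta,L,g,I)}$. The equivalence relation used to define the set of morphisms (identifying tuples with the same associated diffeomorphism) is precisely what makes this assignment well defined and injective on classes, so the set of morphism classes from $x$ to $y$ injects into $\{f\in\text{Diff}^+(X)\mid y=x\cdot f\}$.

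For the reverse inclusion, given $f\in\text{Diff}^+(X)$ with $y=x\cdot f$, I would first pick $f_i\in f_{\mathcal J}$ representing the mapping class $[f]\in\mathcal{MC}(X)$ and set $\phi:=f\circ f_i^{-1}\in\text{Diff}^0(X)$. Writing $z:=x\cdot\phi$ and using that the action of $\text{Diff}^+(X)$ on $\mathcal I$ is a right action (so that $(x\cdot\phi)\cdot f_i=x\cdot(\phi\circ f_i)=x\cdot f$), one has $y=z\cdot f_i$. By Lemma~\ref{isoteich}(ii) applied to the Teichmüller groupoid, $\phi$ is represented by some $(x,\alpha,\beta,L,g)$ with $\Psi_{(x,\alpha,\beta,L,g)}\equiv\phi$ and target in $K_{\beta_n}$. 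Selecting $1\in\mathcal J$ to index the identity class in $f_{\mathcal J}$ (which we may always arrange when making the initial choice of representatives), the Riemann tuple $(x,\alpha,\beta,L,g,I)$ with $I:=(1,\dots,1,i)$ lies in $R_{\alpha,\beta,L,I}$, and its canonical diffeomorphism, by the natural extension of \eqref{dc2} to arbitrary length, equals $X_{(x,\alpha,\beta,L,g,I)}\equiv\Psi_{(x,\alpha,\beta,L,g)}\circ f_i\equiv\phi\circ f_i=f$. Hence $f$ is realized as a morphism class from $x$ to $y$, giving the reverse inclusion.

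The one delicate point, and the only place where a little care is needed, is the claim just used about $X_{(x,\alpha,\beta,L,g,I)}$ for length $n>1$ with identity $f_{i_k}$'s in the intermediate slots. This is a mechanical unwinding of \eqref{psiencore}: each intermediate step $(\cdots)\cdot f_{i_k}=(\cdots)\cdot\text{Id}$ is trivial, so the chain collapses to $\psi_{\alpha,\beta,L}(x,g)\cdot f_{i_n}=x\cdot(\Psi_{(x,\alpha,\beta,L,g)}\circ f_{i_n})$, and Lemma~\ref{diffencoding} then yields the desired factorization $X\equiv\Psi_{(x,\alpha,\beta,L,g)}\circ f_i$. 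I expect no serious obstacle beyond this bookkeeping, since all of the substantive analytic content, namely the surjectivity of the map from tuples to diffeomorphisms in $\text{Diff}^0(X)$, is already encapsulated in Lemma~\ref{isoteich}.
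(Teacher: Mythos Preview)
Your proposal is correct and follows essentially the same approach as the paper: reduce to Lemma~\ref{isoteich}(ii) by factoring $f$ through a chosen representative $f_i\in f_{\mathcal J}$ of the appropriate mapping class, using that $f_{\mathcal J}$ is closed under inverses. The paper's version is terser (it writes $f=f\circ f_i\circ f_i^{-1}$ with $f\circ f_i\in\text{Diff}^0(X)$ and appeals directly to composition in the groupoid rather than unwinding the tuple $I=(1,\dots,1,i)$), but the substance is the same.
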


\begin{proof}
Let $f$ belong to \eqref{morxyR}. Then, there exists $i\in\mathcal J$ such that $f\circ f_i$ belongs to $\text{Diff}^0(X)$. By Lemma \ref{morphismsxy}, we know that
$f\circ f_i$ belongs to the set of morphisms joining $x$ to $y\cdot f_i$. Hence $f=f\circ f_i\circ f_i^{-1}$ belongs to the set of morphisms joining
$x$ to $y$.
\end{proof}
 
%
%
%
%

Finally, the proof that the stackification of the Riemann groupoid is isomorphic to $\mathcal M(X,V)$ is completely analogous to the corresponding proof for the Teichm\"uller groupoid. We only have to consider in \eqref{hab} that $g_{ab}$ has two components $(g^1_{ab},g^2_{ab})$, the second one being in $\Gamma$ and to add the right action of $g^2_{ab}$ in \eqref{tau}, \eqref{clifting}.\vspace{5pt}\\
Notice the obvious Corollary

\begin{corollary}
\label{Lindep}
We have:
\begin{enumerate}
\item[(i)] The groupoid $M_{\mathcal U}$ is independent of $\mathcal L$ up to analytic Morita equivalence.
\item[(ii)] The groupoid $T_{\mathcal U}$ is independent of $\mathcal L$ up to analytic Morita equivalence.
\end{enumerate}
\end{corollary}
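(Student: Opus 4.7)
The plan is to exploit Theorems \ref{XVstack} and \ref{stack2}, which identify the stackifications of $M_{\mathcal{U}}$ and $T_{\mathcal{U}}$ with the categories $\mathcal{M}(X,V)$ and $\mathcal{T}(X,V)$ respectively. Crucially, these two target stacks are defined intrinsically on the open set $V\subset\mathcal{I}$, with no reference to any covering family $\mathcal{L}$ entering the construction of $\mathcal{U}$. Since the Morita equivalence class of a smooth analytic groupoid is determined by the Artin analytic stack it presents, both statements should follow at once.

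More precisely, to prove (ii) I would fix two covering families $\mathcal{L}$ and $\mathcal{L}'$ of $V$ together with regular atlases $\mathcal{U}$ and $\mathcal{U}'$ built on them. Theorem \ref{maintheorem} asserts that $T_{\mathcal{U}}$ and $T_{\mathcal{U}'}$ are both Artin analytic presentations of the Teichm\"uller stack $[V/\text{Diff}^0(X)]$, and Theorem \ref{stack2} identifies this stack with $\mathcal{T}(X,V)$. Because $\mathcal{T}(X,V)$ is defined independently of any choice of covering family, the two groupoids present the same Artin analytic stack, hence are Morita equivalent. Statement (i) will follow by the identical argument, substituting the Riemann counterparts: Theorem \ref{mainR} in place of Theorem \ref{maintheorem}, Theorem \ref{XVstack} in place of Theorem \ref{stack2}, and $\mathcal{M}(X,V)$ in place of $\mathcal{T}(X,V)$.

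The only delicate point is invoking correctly that two smooth analytic groupoids presenting the same Artin analytic stack lie in the same Morita class. In the language of subsection \ref{TRstacks}, this is essentially built into the definition via \cite[\S 5.4]{Moerdijk}. If a more concrete argument is wanted, I would mimic the proof of part II of Theorem \ref{maintheorem} by assembling a mixed groupoid $T_{\mathcal{U}\mathcal{U}'}$ on the disjoint union of the transversals of $\mathcal{U}$ and $\mathcal{U}'$, with morphisms generated by the simple holonomies and local automorphisms of both atlases. The main obstacle in that alternative would be to compare a transversal indexed by $L\in\mathcal{L}$ with one indexed by $L'\in\mathcal{L}'$; this can be addressed by the enlargement procedure of subsection \ref{regatlas}, where each Kuranishi chart is allowed to carry additional compatible complements, so that one may refine $\mathcal{U}$ and $\mathcal{U}'$ to a common regular atlas subordinate to $\mathcal{L}\cup\mathcal{L}'$. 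The weak equivalences from $T_{\mathcal{U}\mathcal{U}'}$ to $T_{\mathcal{U}}$ and $T_{\mathcal{U}'}$ are then built exactly as in the cartesian diagram \eqref{tgdCD}, and the analogous construction for $M_{\mathcal{U}\mathcal{U}'}$ settles (i).
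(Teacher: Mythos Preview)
Your proposal is correct and follows essentially the same approach as the paper: the paper's proof is a one-line appeal to Theorems \ref{XVstack} and \ref{stack2}, observing that the target stacks $\mathcal M(X,V)$ and $\mathcal T(X,V)$ are defined with no reference to $\mathcal L$, so independence of $\mathcal L$ is immediate. Your additional sketch of a direct mixed-atlas argument goes beyond what the paper provides but is consistent with the spirit of part II of Theorem \ref{maintheorem}.
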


\begin{proof}
Since both stackifications are completely independent of $\mathcal L$ by Theorems \ref{mainRbis} and \ref{maintheorembis}, we have directly the results. 
\end{proof}

\begin{remark}
 \label{RvsT}
 In the classical case of Riemann surfaces, the Teichm\"uller space is nicer than the Riemann moduli space, since the first one is a manifold whereas the second one is an orbifold. There is no such difference between the Teichm\"uller stack and the Riemann moduli stack. Both have similar structures of Artin analytic stacks. However, the Teichm\"uller groupoid has a much more natural geometric interpretation as the holonomy groupoid of the TG foliated structure of $\mathcal I$. The Riemann moduli stack is built from this holonomy groupoid and from the action of the mapping class group. Hence, for quite different reasons than for surfaces, the Teichm\"uller stack is nicer than the Riemann moduli stack.
\end{remark}

\section{Examples.}
\label{examples}
\begin{example}{\bf Tori.}
\label{tori}
Consider firstly the one-dimensional case. So let $X$ be $\mathbb S^1\times\mathbb S^1$. Then $\mathcal I$ is connected and, as geometric quotients, $\mathcal T(X)$ is the upper half plane $\mathbb H$, and $\mathcal M(X)$ is the orbifold obtained as the quotient of $\mathbb H$ by the classical action \eqref{sl2action} of $\text{SL}_2(\mathbb Z)$.
\vspace{5pt}\\
However, these are not  the Teichm\"uller and Riemann stacks of $X$, but of $X$ with a fixed point, that is they are the Teichm\"uller and Riemann stacks of $X$ for structures of elliptic curves.
\vspace{5pt}\\
To describe $\mathcal T(X)$ and $\mathcal M(X)$ as stacks, we must incorporate the action of the translations. This can be done as follows. Consider the quotient $\mathcal X$ of $\mathbb C\times\mathbb H$ by the group generated by
\begin{equation}
\label{vtorigen}
(z,\tau)\longmapsto (z+1,\tau)\quad\text{ and }\quad (z,\tau)\longmapsto (z+\tau,\tau)
\end{equation}
Then
\begin{equation}
\label{vtori}
[z,\tau]\in\mathcal X\longmapsto \pi [z,\tau]:=\tau\in\mathbb H
\end{equation}
is a universal family for all $1$-dimensional tori, cf. \cite{MK}, pp.18-19. Then, we may take as Teichm\"uller groupoid, the groupoid
\begin{equation}
\label{toriTgroupoid}
\mathcal T(X)=\left [\mathcal X\rightrightarrows \mathbb H\right]
\end{equation}
where the source and target maps are both equal to the projection map $\pi$ of \eqref{vtori} and where composition is just addition. This must be understood as follows. The common fibers at a point $\tau$ is the elliptic curve $\mathbb E_\tau$ which must be thought of as the translation group of  $\pi^{-1}(\tau)$. Observe that even if we are considering tori, the family $\mathcal X$ has a natural section, namely the image of $\{0\}\times \mathbb H$ through \eqref{vtorigen}, allowing a natural identification between $\pi^{-1}(\tau)$ and its translation group. The fact that the source and target maps coincide reflects the stability of the translation groups as explained in Remark \ref{applicor}.
\vspace{5pt}\\
To describe the Riemann groupoid, we now just have to add the $\text{SL}_2(\mathbb Z)$ action. Given
\begin{equation}
\label{matrixA}
A=\begin{pmatrix}
p &q\cr
r &s
\end{pmatrix}
\end{equation}
an element of $\text{SL}_2(\mathbb Z)$, recall that
\begin{equation}
\label{sl2action}
A\cdot \tau=\dfrac{p\tau+q}{r\tau+s}.
\end{equation}
Just set now
\begin{equation}
\label{toriMgroupoid}
\mathcal M(X)=\left[\text{SL}_2(\mathbb Z)\times\mathcal X\rightrightarrows \mathbb H\right ]
\end{equation}
where the source map is $\pi$, the target map is given by the $\text{SL}_2(\mathbb Z)$ action, and composition follows the rule
\begin{equation}
\label{comptori}
(B,[b]_{A\cdot \tau},A\cdot \tau)\circ (A,[a]_\tau,\tau) =(BA,[a+b(r\tau+s)]_\tau, \tau)
\end{equation}
for $A$ defined in \eqref{matrixA} and $[z]_\tau$ meaning the class of $z\in\mathbb C$ modulo $\mathbb Z\oplus\mathbb Z\tau$.
\vspace{5pt}\\
Let us treat now the higher dimensional case. It follows exactly the same pattern. A universal family is described in \cite[\S 5.2]{KodairaBook}. One replaces $\mathbb H$ with
\begin{equation}
\label{hdH}
\mathcal H_n:=\{T\in \text{M}_n(\mathbb C)\mid \det \text{Im }T >0\}
\end{equation}
and one takes the quotient $\mathcal X_n$ of $\mathcal H_n\times \mathbb C^n$ by the action generated by
\begin{equation}
\label{hdtoriTeich}
(T,z)\longmapsto (T, z+e_i)\quad\text{ and }\quad(T,z)\longmapsto (T, z+T_i)
\end{equation}
where $(e_i)$ is the canonical basis of $\mathbb C^n$ and $(T_i)$ the rows of $T$. Then the Teichm\"uller stack can be presented as
\begin{equation}
\label{hdtoriTgroupoid}
\mathcal T(X)=\left [\mathcal X_n\rightrightarrows \mathcal H_n\right]
\end{equation}
where the source and target maps are both equal to the projection map and where composition is just addition. Finally, given
\begin{equation}
\label{matrixAn}
A=\begin{pmatrix}
P &Q\cr
R &S
\end{pmatrix}
\end{equation}
an element of $\text{SL}_{2n}(\mathbb Z)$ decomposed into blocks of size $n\times n$, recall that
\begin{equation}
\label{sl2naction}
A\cdot T=(PT+Q)(RT+S)^{-1}
\end{equation}
is the action of $\text{SL}_{2n}(\mathbb Z)$ onto $\mathcal H_n$ identifying biholomorphic complex tori.
Just set now
\begin{equation}
\label{hdtoriMgroupoid}
\mathcal M(X)=\left[\text{SL}_{2n}(\mathbb Z)\times\mathcal X_n\rightrightarrows \mathcal H_n\right ]
\end{equation}
where the source map is the projection, the target map is \eqref{sl2naction}, and composition follows the rule
\begin{equation}
\label{hdcomptori}
(B,[b]_{A\cdot T},A\cdot T)\circ (A,[a]_T, T)=(BA,[a+b(RT+S)]_T, T)
\end{equation}
The geometric quotients are $\mathcal H_n$ as Teichm\"uller space and the quotient of $\mathcal H_n$ by the action \eqref{sl2naction} as Riemann space. Notice however that this is far from being an orbifold, cf. \cite[\S 5.2]{KodairaBook} and \cite{VerbitskyErgodic}. 
\end{example}
\begin{example}{\bf Hyperk\"ahler manifolds.}
\label{HK}
We make the connection between our general results and the beautiful description of the Teichm\"uller space for simple hyperk\"ahler manifolds in \cite{Verbitsky}, to which we refer for further details. Let $X$ be any oriented smooth compact manifold admitting hyperk\"ahler structures. We restrict $\mathcal I$ to complex structures {\it of hyperk\"ahler type}. It has a finite number of connected components. It follows from Proposition 15 of \cite{Catsurvey} and the injectivity of the local period map that $\mathcal T(X)$ coincide locally with the Kuranishi space. Moreover, we consider only simple hyperk\"ahler structures, that is simply connected ones. This implies that the first cohomology group with values in the structure sheaf is zero. So is the group of global $(n-1)$ holomorphic forms by Serre duality. Hence, by pairing, these simple hyperk\"ahler manifolds do not admit any non zero holomorphic vector field.
\vspace{5pt}\\
In our setting, this means that 
\begin{enumerate}
\item[(i)] $\mathcal T(X)$ is \'etale, see Corollary \ref{etale?}, and, taking into account Remark  \ref{Aut10bis}, coincides with the holonomy groupoid constructed in subsection \ref{firstapp} up to a finite morphism\footnote{We do not know if any simple hyperk\"ahler manifold is rigidified. In case it is, recall that $\mathcal T(X)$ coincides with the holonomy groupoid.}.
\item[(ii)] There is no non trivial holonomy germ.
\end{enumerate}
In other words, $\mathcal T(X)$ is, up to a finite morphism, the leaf space of a single $L$-foliation, and this foliation has no holonomy, so this leaf space is locally Hausdorff and coincides locally at each point with the Kuranishi space of this point. By Bogomolov-Tian-Todorov Theorem, the Kuranishi space is a manifold, so the leaf space is locally a complex manifold.
\vspace{5pt}\\
This is however not enough to imply Hausdorffness; but it forces the inseparable points to lie on a subset of measure zero. And it gives $\mathcal T(X)$ the structure of a non-Hausdorff complex manifold.
\vspace{5pt}\\
In this particular case, Verbitsky shows in Theorem 1.15 that the inseparibility condition is an equivalence relation and that the quotient of $\mathcal T(X)$ by this equivalence relation is a Hausdorff complex manifold that he calls the birational Teichm\"uller space (taking into account that, following a result by Huybrechts, inseparable points correspond to birational hyperk\"ahler manifolds). 
\vspace{5pt}\\
Finally, the action of the mapping class group on $\mathcal T(X)$ can be very complicated, see \cite{VerbitskyErgodic}.
\end{example}
\begin{example}{\bf Hopf surfaces.}
\label{Hopfbis}
We go back to the Hopf surfaces of Example \ref{Hopf}. We assume the reader to be acquainted with deformation theory of primary Hopf surfaces as detailed in \cite{We}. We consider a connected component $\mathcal I_0$ of $\mathcal I$. Looking at the $f$-homotopy graph of Figure \ref{Hopffig}, we see that it is enough to use Kuranishi spaces of type IV and type III Hopf surfaces. It follows from \cite{We} and Lemma \ref{Hopfcc} that
\begin{enumerate}
\item[(i)] We have $\mathcal T(X,\mathcal I_0)=\mathcal M(X)$.
\item [(ii)] We have $\text{Aut}(X_J)=\text{Aut}^1(X_J)=\text{Aut}^0(X_J)$ for all structures $J$.
\end{enumerate}
All type IV can be described as a single Kuranishi family constructed as follows, cf. \cite{We} and \cite{Dabrowski}. Define
\begin{equation}
\label{typeIV}
U:=\left\{
A\in \text{GL}_2(\mathbb C)
\quad\text{such that}\quad\left\{\begin{aligned}&(i)\ 0<\vert\lambda_1\vert \leq \vert\lambda_2\vert<1\cr
&(ii)\ \lambda_1 =\lambda_2^p\Longrightarrow p=1
\end{aligned}\right .
\right\}
\end{equation}
for $\lambda_1$ and $\lambda_2$ the eigenvalues of $A$.
Set
\begin{equation}
\label{typeIVfamily}
\mathcal X_U:=\big (\mathbb C^2\setminus\{(0,0)\}\times U\big )\Big /\big\langle (Z,A)\mapsto (A\cdot Z, A)\big\rangle
 \end{equation}
Then $\mathcal X_U\to U$ is a versal family for every Hopf surface of type IV, which is moreover complete for every surface of type IIb and of type IIc. Let $p>1$ and define
\begin{equation}
\label{typeIIIp}
V_p:=\left\{
(\lambda_1,\lambda_2,\alpha)\in\mathbb C^3
\quad\text{with}\quad\left\{\begin{aligned}&(i)\ 0<\vert\lambda_1\vert < \vert\lambda_2\vert<1\cr
&(ii)\ \lambda_1 =\lambda_2^q\Longrightarrow q=p
\end{aligned}\right .
\right\}
\end{equation}
Set
\begin{equation}
\label{typeIIIpfamily}
\mathcal X_{V_p}:=\big (\mathbb C^2\setminus\{(0,0)\}\times V_p\big )\Big /\big\langle (z,w,A)\mapsto (\lambda_1z+\alpha w^p,\lambda_2w, A)\big\rangle
\end{equation}
for $A=(\lambda_1,\lambda_2,\alpha)$. Then $\mathcal X_{V_p}\to V_p$ is a versal family for every Hopf surface of type III with weight $p$, which is moreover complete for every surface of type IIa with weight $p$ and of type IIc. Incorporating the automorphism groups, we define
\begin{equation}
\label{TIV}
\mathcal T_{IV}:=\big (\text{GL}_2(\mathbb C)\times U\big )\big / \langle (M,A)\mapsto (MA, A)\rangle
\end{equation} 
and consider the groupoid
\begin{equation}
\label{groupoidIV}
\mathcal T_{IV}\rightrightarrows U
\end{equation}
where the source map is the projection onto the second factor of \eqref{TIV}; the target map is the conjugation of the second factor by the first one; and the composition follows the rule
\begin{equation}
\label{compIV}
[N,MAM^{-1}]\circ[M,A]=[NM,A]
\end{equation} 
Then \eqref{groupoidIV} is a Teichm\"uller groupoid for a neighborhood of the $f$-homotopy class IV including all type IV, IIb and IIc Hopf surfaces. In the same way, let  
\begin{equation}
\label{Gp}
G_p=\{(z,w)\mapsto (az+bw^p, dw)\mid ad\not =0\}
\end{equation}
and define
\begin{equation}
\label{TIIIp}
\mathcal T_{IIIp}:=\big (G_p\times V_p\big )\big / \langle (M,A)\mapsto (MA, A)\rangle
\end{equation} 
with the convention that, given $A=(\lambda_1,\lambda_2,\alpha)$ and given $M$ with coefficients $(a,b,d)$, then
\begin{equation}
\label{AM}
MA:=\big ((z,w)\longmapsto (a\lambda_1 z+(a\alpha + b\lambda_2^p)w^p,d\lambda_2 w)\big )
\end{equation}
Consider the groupoid
\begin{equation}
\label{groupoidIIIp}
\mathcal T_{IIIp}\rightrightarrows V_p
\end{equation}
where the source map is the projection onto the second factor of \eqref{TIIIp}; the target map is the conjugation of the second factor by the first one using \eqref{AM}; and the composition is given by composition in $G_p$. 
Then \eqref{groupoidIIIp} is a Teichm\"uller groupoid for a neighborhood of the $f$-homotopy class III of weight $p$ including all type III of weight $p$, IIb of weight $p$ and IIc Hopf surfaces.
\vspace{5pt}\\
To finish with, we consider the disjoint union of groupoid \eqref{groupoidIV} and of groupoids \eqref{groupoidIIIp} for all $p>1$. We need to add the holonomy morphisms between these groupoids. 
In this case, it is not even necessary to fat the spaces, since we have natural identifications
\begin{equation}
\label{Hopfiden}
\big ((\lambda_1,\lambda_2,0)\in V_p \text{ such that } \lambda_1\not = \lambda_2^p\big )\sim
\begin{pmatrix}
\lambda_1 &0\cr
0 &\lambda_2
\end{pmatrix}
\in U
\qquad (p>1)
\end{equation}
So we take as presentation of $\mathcal T_{\mathcal I_0} (X)$ the groupoid whose objects are
\begin{equation}
\label{Hopfobjects}
U\uniondisjointe_{p>1} V_p
\end{equation}
and whose morphisms are generated by morphisms of \eqref{groupoidIV} and \eqref{groupoidIIIp} for all $p>1$ from the one hand, and by identifications \eqref{Hopfiden} from the other hand. To be more precise, set
\begin{equation}
\label{morphismssuppHopf}
W_p:=\{(\lambda_1,\lambda_2,0)\in V_p \text{ such that } \lambda_1\not = \lambda_2^p\}\qquad (p>1)
\end{equation}
and define the supplementary set of morphisms as 
\begin{equation}
\label{TIIIpIV}
\mathcal T_{IIIpIV}:=\left (\left\{M=
\begin{pmatrix}
a &0\cr
0 &d
\end{pmatrix}
\right \}
\times W_p\right )\Big / \langle (M,A)\mapsto (MA, A)\rangle
\end{equation} 
with source map being the second projection and target map being conjugation of the second factor by the first one composed with identification \eqref{Hopfiden}.
Hence the set of morphisms is generated from
\begin{equation}
\label{morphismsHopf}
\mathcal T_{IV}\uniondisjointe_{p>1}\mathcal T_{IIIp}\uniondisjointe_{p>1}\mathcal T_{IIIpIV}
\end{equation}
using the process explained in section \ref{Teichmueller}.
Recall that $\mathcal T(X,\mathcal I_0)$ is equal to $\mathcal M(X)$, hence this gives also a presentation of $\mathcal M(X)$.
\vspace{5pt}\\
Finally, we give a model for the geometric quotient of $\mathcal T(X,\mathcal I_0)$. Consider the map
\begin{equation}
\label{dettrace}
A\in\text{GL}_2(\mathbb C)\longmapsto \phi(A):=(\text{det }A,\text{Tr }A)\in\mathbb C^*\times\mathbb C
\end{equation}
then $\phi(U)$ coincides with the quotient space of $U$ by the conjugation action of $\text{GL}_2(\mathbb C)$ except for matrices with a single eigenvalue.
\vspace{5pt}\\
From this, it is easy to check that the geometric quotient can be constructed as follows.
\begin{itemize}
\item Start with the domain
\begin{equation}
\label{domain}
D=\phi(\{A\in GL_2(\mathbb C)\mid 0<\vert \lambda_1\vert\leq \vert\lambda_2\vert <1\})\subset\mathbb C^*\times\mathbb C
\end{equation}
that is with the image by $\phi$ of the set of invertible matrices with both eigenvalues having modulus strictly less than one.
\item Double asymmetrically the points of the analytic subspace
 \begin{equation}
\label{gluingIV}
\{(1/4t^2,t)\mid 0<\vert t\vert <2\}\subset D
\end{equation}
making $D$ non-Hausdorff along \eqref{gluingIV}. This encodes the fact that above such a point (for $\phi$), there are two distinct $\text{GL}_2(\mathbb C)$-orbits and not a single one. Note that these points correspond to type IV Hopf surfaces.
\item For each value of $p>1$, double asymmetrically the points of the analytic subspace
\begin{equation}
\label{gluingIIIp}
\{(t^{p+1},t+t^p)\mid 0<\vert t\vert <1\}\subset D
\end{equation}
making $D$ non-Hausdorff along \eqref{gluingIIIp}. This encodes the jumping phenomenon of type III Hopf surfaces of weight $p$.
\end{itemize}
By {\it doubling asymmetrically the points along some subset} $C$, we mean that we replace the subset $C$ by $C\sqcup C$ with the following topology. The second component is endowed with the topology of $C\subset D$. But given any point $P$ in the first component of $C\sqcup C$, then every neighborhood of $P$ contains also the corresponding point $Q$ in the other component. Hence, $P$ and $Q$ are not separated, however they do not play the same role and the situation is not symmetric.

\begin{remark}
 \label{Cstar}
 Let $\mathbb C^*$ act by homotheties onto $\mathbb C$. Then the geometric quotient contains exactly two non-separated points and is obtained from a single point by doubling it asymmetrically. Hence, we can obtain the previous geometric quotient as follows. Consider
 \begin{equation}
 \{(\phi(t,s),w)\in D\times\mathbb C\mid w\not =0\Rightarrow s=t^p \text{ for some }p>0\}
\end{equation}
and take its quotient by $\mathbb C^*$ acting by homotheties on the $\mathbb C$-factor.
\end{remark}

We thus finish with a domain in $\mathbb C^*\times \mathbb C$ non-Hausdorff along a countable set of analytic curves. At each point corresponding to a type IV or a type III Hopf surface, this space is not locally Hausdorff, hence not locally isomorphic to a analytic space.
\begin{remark}
\label{Hausdorfftypes}
Spaces obtained by doubling asymmetrically the points along some subset $C$ are not locally Haudorff along $C$ since every neighborhood of a point $P$ of $C$ contains also the double $Q$ of this point. In particular, any sequence of points converging onto $P$ also converges onto $Q$. This is completely different from the non-Hausdorff spaces obtained as leaf spaces of a foliation with no holonomy (cf. the Teichm\"uller space of simple Hyperk\"ahler manifolds, see \cite{Verbitsky} and Example \ref{HK}). In this last case, given two inseparable points $P$ and $Q$, we can find neighborhoods of $P$ (respectively $Q$) that do not contain $Q$ (respectively $P$). In particular, we can find sequences of points converging to $P$ and not converging to $Q$ (and vice versa). Such spaces are locally Hausdorff.
\end{remark}
\end{example}

\begin{example}{\bf Hirzebruch surfaces.}
\label{Hirzebruchbis}
We go back to the Hirzebruch surfaces of Example \ref{Hirzebruch}. Let $a>0$. To describe $\mathcal M(X,\I (a))$, we see from Figure \ref{Hirzebruchfig} that it is enough to use a single Kuranishi space, that of $\mathbb F_{2a}$. It is equal to $\mathbb C^{2a-1}$ and decomposes as a sequence of algebraic cones (cf. \cite{Cat}, p.21). To be more precise, for any $k\geq 0$, define the algebraic cone
\begin{equation}
\label{Tk}
T_k:=\left\{v\in\mathbb C^{2a-1}\mid \text{rank }
\begin{pmatrix}
v_1 &\hdots &v_{k+1}\cr
\vdots &&\vdots\cr
v_{2a-k-1} &\hdots &v_{2a-1}
\end{pmatrix}
\leq k
\right \}
\end{equation}
of dimension $\min (2a-1, 2k)$. For any $b\leq a$, a point $x$ of $\mathbb C^{2a-1}$ encodes the surface $\mathbb F_{2b}$ if and only if 
\begin{equation}
\label{Hirzcond}
x\in T_{a-b}\setminus T_{a-b-1}. 
\end{equation} 
Taking into account that 
\begin{equation}
\label{hirzconst}
h^0(\mathbb F_{2b})=2b+5\text{ for }b>0\qquad\text\quad h^0(\mathbb F_0)=6
\end{equation}
one may check that $\text{Aut}(\mathbb F_{2a})$ acts on $\mathbb C^{2a-1}$ transitively on each cone (this follows directly from Proposition \ref{autfol}). 
\vspace{5pt}\\
Now, we have to take care of the action of the mapping class group, computed in Lemma \ref{Hirzebruchcc} and Corollary \ref{Hirzebruchcc2}. We can focus on a single connected component of structures, since they are all identified. Hence, we only have to encode the action of the switching map $g$ of \eqref{gaction}. This amounts to consider two copies of 
\begin{equation}
\label{hirz12}
\text{Aut}(\mathbb F_{2a})\times \mathbb C^{2a-1}\rightrightarrows \mathbb C^{2a-1}
\end{equation}
and to add the following morphisms: first a holonomy morphism sending a point
\begin{equation}
\label{hirzglu1}
z\in T_a\setminus T_{a-1}=\mathbb C^{2a-1}\setminus T_{a-1}
\end{equation}
belonging to the first copy of $\mathbb C^{2a-1}$ to the same point in the second copy. Notice that, because of \eqref{Hirzcond}, such a point encodes $\mathbb P^1\times\mathbb P^1$. This holonomy morphism is not defined on the points encoding the other Hirzebruch surfaces. This reflects the fact, explained in Lemma \ref{Hirzebruchcc}, that the set of $\mathbb P^1\times\mathbb P^1$ in a connected component of structures is connected whereas that of the other Hirzebruch surfaces has two connected components. Then we add the action of $g$, which switches the two copies of $\mathbb C^{2a-1}$.
\vspace{5pt}\\
Geometrically, we end with a single copy of $\mathbb C^{2a-1}$, but with two (non-separated) copies of the cone $T_{a-1}$. In other words, adapting the vocabulary of Example \ref{Hopfbis}, we double {\it symmetrically} the points of $\mathbb C^{2a-1}$ along the cone $T_{a-1}$. The automorphism group of $\mathbb F_{2a}$ acts as previously described and the automorphism $g$ of $\mathbb P^1\times\mathbb P^1$ fixes $\mathbb C^{2a-1}$ but exchanges the two copies of the cone.
\vspace{5pt}\\
More formally, the set of objects of $\mathcal M(X,\I(a))$ is
\begin{equation}
\label{Hirzobjects}
\mathbb C^{2a-1}\uniondisjointe \mathbb C^{2a-1}
\end{equation}
and the set of morphisms is generated from
\begin{equation}
\label{Hirzmorphisms}
\begin{aligned}
&\text{Aut}(\mathbb F_{2a})\times \mathbb C^{2a-1}\uniondisjointe \text{Aut}(\mathbb F_{2a})\times \mathbb C^{2a-1}\cr
\uniondisjointe &\text{Aut}(\mathbb F_{2a})\times (\mathbb C^{2a-1}\setminus T_{a-1})\uniondisjointe \text{Aut}(\mathbb F_{2a})\times \mathbb C^{2a-1}.
\end{aligned}
\end{equation}
The third component corresponds to the holonomy morphism \eqref{hirzglu1} and the fourth one to $g$. Source, target and composition can easily be described and we omit the details (cf. the more complicated Example \ref{Hopfbis}).
\vspace{5pt}\\
This describes completely $\mathcal M(X,\I(a))$ but also $\mathcal T(X,\I(a)\cap \I_0)$. In this last case, perform exactly the same construction, but forget about the $g$-identification, that is drop the fourth component of \eqref{Hirzmorphisms}. The geometric quotients of $\mathcal M(X)$ (respectively $\mathcal M(X,\I(a))$) and $\mathcal T(X,\mathcal I_0)$ (respectively $\mathcal T(X,\I(a)\cap \I_0)$) are respectively
\begin{itemize}
\item $\mathbb N$ (respectively $\{0,\hdots,a\}$) with $b\in \mathbb N$ encoding $\mathbb F_{2b}$ and with open sets given by $\{0\}$, $\{0,1\}$, $\{0,1,2\}$ and so on and
\item $\mathbb Z$ (respectively $\{-a,\hdots,a\}$) with $\pm b$ encoding $\mathbb F_{2b}$ and with open sets generated by $\{0\}$, $\{0,1\}$, $\{0,1,2\}$ and so on from the one hand, $\{-1,0\}$, $\{-2,-1,0\}$ and so on from the other hand.
\end{itemize}
\end{example}

\end{document}